\newtheorem{thm}{Theorem}[section]
\newtheorem{prop}[thm]{Proposition}
\newtheorem{cor}[thm]{Corollary}
\newtheorem{lem}[thm]{Lemma}
\newtheorem{conj}[thm]{Conjecture}
\theoremstyle{definition}
\newtheorem{rmk}[thm]{Remark}
\newcommand{\CC}{\mathbb{C}}
\newcommand{\ZZ}{\mathbb{Z}}
\newcommand{\RR}{\mathbb{R}}
\newcommand{\QQ}{\mathbb{Q}}
\newcommand{\HH}{\mathcal{H}}
\newcommand{\LL}{\mathcal{L}}
\newcommand{\ul}{\underline}
\newcommand{\D}{\mathcal{D}}
\DeclareMathOperator{\Herm}{Herm}
\DeclareMathOperator{\Sp}{Sp}
\DeclareMathOperator{\CH}{CH}
\DeclareMathOperator{\Sym}{Sym}
\DeclareMathOperator{\tor}{tor}
\DeclareMathOperator{\BB}{BB}
\DeclareMathOperator{\Alb}{Alb}
\DeclareMathOperator{\Span}{Span}
\DeclareMathOperator{\SCH}{SCH}
\begin{document}

\title[Modularity of Special 0-Cycles on Toroidal Compactifications]{Modularity of Special 0-Cycles on Toroidal Compactifications of Shimura Varieties}

\author{Jan Hendrik Bruinier, Eugenia Rosu, and Shaul Zemel}

\date{}

\begin{abstract}
We consider the generating series of appropriately completed 0-dimensional special cycles on a toroidal compactification of an orthogonal or unitary Shimura variety with values in the Chow group. We prove that it is a holomorphic Siegel, respectively Hermitian, modular form.
\end{abstract}

\maketitle

\section{Introduction}

\subsection{Kudla's Modularity Conjecture}

In \cite{HZ}, Hirzebruch and Zagier considered special divisors on Hilbert modular surfaces, and proved the modularity of the generating series with special divisors as coefficients, taking values in the cohomology group. This was further generalized by Gross, Kohnen, and Zagier in \cite{GKZ}, who defined Heegner divisors on compactified modular curves. A more general construction for orthogonal and unitary Shimura varieties was presented by Kudla and Millson in \cite{KM}.

In \cite{Ku}, Kudla proposed a wide reaching program with many important applications. For $r\geq1$ and an appropriate symmetric (resp.\ Hermitian) $r \times r$ matrix $T$, there is a natural special cycle $Z(T)$ of codimension $r$ on the open orthogonal (resp.\ unitary) Shimura variety $X$, with a corresponding Chow class $[Z(T)]\in\CH^{r}(X)$. An essential part of the program consists of showing that generating series having these special cycles as coefficients and taking values in the Chow group $\CH^{r}(X)$ are modular.

Moreover, this is conjectured to be true for suitable extensions of the special cycles $Z(T)$ to $\CH^{r}(X^{\tor})_{\CC}$, where $X^{\tor}$ is a toroidal compactification of $X$. For the orthogonal case this was initially proposed in Problem 3 of \cite{Ku} (see also \cite{Li} for the unitary case). We present both cases together below.
\begin{conj} \label{Kudla}
For any toroidal compactification $X^{\tor}$ of $X$, there exists a natural class $[\widetilde{Z}(T)]\in\CH^{r}(X^{\tor})_{\CC}$ extending the special cycles $Z(T)$ on $X$, such that for $\tau$ in the Siegel upper half-space $\mathcal{H}_{r}$ (resp.\ in the Hermitian upper half-space $\mathbb{H}_{r}$), the generating series
\[
\sum_{T} [\widetilde{Z}(T)] \cdot q^{T}\in\CH^{r}(X^{\tor})_{\CC}[[q]],\qquad\mathrm{with\ }q^{T}:=e^{2\pi i\operatorname{tr}(T\tau)},
\]
is a modular form with respect to the group $\widetilde{\Sp}_{2r}$ (resp.\ $\operatorname{U}_{r,r}$), taking values in $\CH^{r}(X^{\tor})_{\CC}$.
\end{conj}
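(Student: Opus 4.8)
Since the cycles in question are $0$-dimensional, I work in the top codimension $r=n:=\dim X$, so that $[\widetilde{Z}(T)]$ lies in $V:=\CH^{n}(X^{\tor})_{\CC}=\CH_{0}(X^{\tor})_{\CC}$ and each $T$ is a positive semidefinite $n\times n$ matrix. The plan is to separate the ``easy'' part of $V$, where modularity comes from classical Eisenstein series, from the ``deep'' part, where it must come from genuine rational equivalences among special cycles. The organizing tool is Borcherds' modularity criterion: the $V$-valued series $\sum_{T}[\widetilde{Z}(T)]q^{T}$ is a Siegel (resp.\ Hermitian) modular form of the expected weight and type if and only if, for every finitely supported family of coefficients $\{c_{T}\}$ satisfying $\sum_{T}c_{T}a_{T}(f)=0$ for all modular forms $f$ of the dual weight, the relation $\sum_{T}c_{T}[\widetilde{Z}(T)]=0$ holds in $V$. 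Dually, it is enough to show that $\sum_{T}\lambda([\widetilde{Z}(T)])q^{T}$ is modular for every $\lambda\in V^{\vee}$.

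The first substantive step is to define and control the completed classes. For $T>0$ the cycle $Z(T)$ is already a finite set of CM points in the interior; the content lies in the degenerate $T$ of rank $<n$, where I would construct $[\widetilde{Z}(T)]$ by intersecting the (lower-rank) special cycles with the appropriate boundary strata of $X^{\tor}$ and extracting the resulting $0$-dimensional contribution, checking independence of the chosen toroidal data. The second step handles the degree functional $\deg\colon V\to\CC$: the series $\sum_{T}\deg[\widetilde{Z}(T)]q^{T}$ should be identified, via the Siegel--Weil formula, with the $q$-expansion of a Siegel (resp.\ Hermitian) Eisenstein series, the nondegenerate coefficients coming from weighted CM point counts and the degenerate coefficients from the boundary completions. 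This fixes the weight and establishes modularity on the degree quotient.

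The third step addresses $V^{0}:=\ker(\deg)$. I would first push forward along the Abel--Jacobi/Albanese map $V^{0}\to\Alb(X^{\tor})_{\CC}$: pairing with global holomorphic $n$-forms turns each $\lambda$ into a period generating series that I expect to recognize as a theta integral, whose modularity follows from the cohomological results of Kudla and Millson. An efficient way to package Steps 2--3 is through formal Fourier--Jacobi expansions: organizing $\sum_{T}[\widetilde{Z}(T)]q^{T}$ by its Fourier--Jacobi coefficients and proving that each is a Jacobi form (by relating it to the genus-$(n-1)$ generating series on a boundary component), one can invoke the principle that a symmetric formal Fourier--Jacobi series with Jacobi-form coefficients is automatically modular.

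The main obstacle is the part of $V^{0}$ lying in the kernel of the Albanese map, which is invisible to both cohomology and Hodge theory and where, for $0$-cycles, rational equivalence is genuinely subtle. The crux of the argument must therefore be a geometric input showing that the completed special $0$-cycles satisfy enough explicit rational equivalences---coming from moving families of CM points, from Borcherds-type relations among the special divisors whose intersections produce the $[\widetilde{Z}(T)]$, and from the boundary geometry---to force $\sum_{T}c_{T}[\widetilde{Z}(T)]=0$ whenever $\{c_{T}\}$ is a relation among modular Fourier coefficients. Establishing these equivalences uniformly in $T$ and compatibly with the toroidal boundary is where I expect the real work of the proof to lie.
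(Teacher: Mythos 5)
There is a genuine gap, and it sits exactly at your Step 2. Under the paper's standing assumption that $X$ is non-compact, the generating series of degrees $\sum_{T}\deg[\widetilde{Z}(T)]\,q^{T}$ is \emph{not} expected to be modular: the Kudla--Millson theta integral computing it lies outside the Weil convergence range, the Siegel--Weil identity only holds after regularization, and the resulting degree series is (conjecturally, and in known low-rank examples such as Zagier's weight $3/2$ Eisenstein series) only the holomorphic part of a non-holomorphic Eisenstein series, i.e.\ mock modular. Consequently no extension $[\widetilde{Z}(T)]$ with $\deg[\widetilde{Z}(T)]=\deg Z(T)$ for $T>0$ can make your "degree quotient" step work, and your plan of proving modularity separately on $\operatorname{image}(\deg)$ and on $\ker(\deg)$ collapses at the first of the two pieces. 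The paper's resolution is to build the degree obstruction into the definition of the extension: it sets $Z^{\circ}(T,\ul{\mu}):=Z(T,\ul{\mu})-\deg Z(T,\ul{\mu})\cdot[s]$, where $[s]$ is the canonical class (by Ma's theorem) of any boundary point over a $0$-dimensional cusp, so that every coefficient lies in $\CH^{n}(X^{\tor})^{0}_{\CC}=\ker(\deg)$ and the degree functional is trivially dispatched.

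Your Step 3 then leaves the actual content unproven: you correctly identify that the Albanese kernel is where rational equivalence of $0$-cycles is subtle, but you defer the needed relations to "where I expect the real work of the proof to lie." The paper avoids manufacturing any new relations on $X^{\tor}$. It proves that $y\mapsto y-\deg(y)\cdot s$ induces an isomorphism $\CH^{n}(X)_{\QQ}\xrightarrow{\ \sim\ }\CH^{n}(X^{\tor})^{0}_{\QQ}$ (via the localization sequence plus Ma's theorem that all boundary points over $0$-dimensional cusps are rationally equivalent; in the orthogonal case this is done for a "special Chow group," using that special curves meet the boundary only over $0$-dimensional cusps), and then observes that $F^{\circ}(\tau)=\psi(F(\tau))$, so modularity is \emph{imported} wholesale from the already-known open-variety results of Bruinier--Raum, Zhang, Xia and Liu rather than re-derived from theta integrals, Fourier--Jacobi expansions, or new Borcherds relations. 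If you want to salvage your outline, the two fixes are: (i) replace your boundary-strata construction of $[\widetilde{Z}(T)]$ by the degree-zero completion above, and (ii) replace Steps 2--3 by the transfer isomorphism to the open Shimura variety.
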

The statement of Conjecture \ref{Kudla} has the usual meaning, that for every linear functional $l:\CH^{r}(X^{\tor})_{\CC}\to\CC$, the generating series $\sum_{T}l(\widetilde{Z}(T))q^{T}$ is a modular form in the usual sense.

\smallskip

In the case of an open orthogonal or unitary Shimura variety $X$, Kudla and Millson proved in their seminal work \cite{KM} the modularity of the series with values in the cohomology group $H^{2r}(X,\CC)$. For special divisors on orthogonal Shimura varieties, Borcherds extended in \cite{Bo2} the modularity in the Chow group $\CH^{1}(X)_{\CC}$ using the Borcherds lift. Zhang established in \cite{Zh} the modularity in any codimension, conditional on convergence. The first author and Raum completed in \cite{BR} the proof of the modularity conjecture in the Chow group in the orthogonal case. We also mention that \cite{YZZ} proved the modularity of divisors over totally real fields, as well as a similar conditional result for higher codimensional cycles. For open unitary Shimura varieties, the case of divisors was proved by Liu in \cite{Liu} (see also \cite{Ho2} for a proof using Borcherds products). For higher codimensional cycles, the conjecture was proved conditionally on the convergence of the generating series in \cite{Liu}, with cases of the convergence now established by Xia in his extension \cite{Xi} of the results of \cite{BR}.

\smallskip

Much less is known, however, in the case of toroidal compactifications of Shimura varieties. In the orthogonal case, the first and third authors of this paper constructed in \cite{BZ} the appropriate special divisors by investigating the boundary behavior of automorphic Green functions on the open Shimura variety, and proved the modularity of the generating series. Recently, Engel, Greer and Tayou \cite{EGT} showed that the closures $\overline{Z(m)}$ in $X^{\tor}$ of the special divisors $Z(m)$ on $X$ appear as the coefficients of a mixed mock modular form.
A similar result was also proved independently by Garcia in \cite{Ga}.

In the unitary case, for a Shimura variety $X$ corresponding to the unitary group $\operatorname{U}(V)$ for a Hermitian space $V$ of signature $(n,1)$, the modularity of the generating series of divisors follows from the stronger work \cite{BHKRY}, who proved the arithmetic version of Conjecture \ref{Kudla}.

\smallskip

In the current paper we will prove the first higher-codimensional case of Conjecture \ref{Kudla} for a toroidal compactification of a Shimura variety of orthogonal or unitary type. We will construct appropriate special $n$-cycles on the toroidal compactification $X^{\tor}$ and show the modularity of the generating series in $\CH^{n}(X)_{\CC}$, effectively proving Conjecture \ref{Kudla} in case $n=\dim X$.

As in the compact case the results mentioned above provide a complete treatment of Kudla's modularity conjecture (conditional on convergence in the unitary case), throughout this paper we will assume that the open Shimura variety $X$ is non-compact.

\subsection{The Results}

We now present the detailed results of our paper. Let $(V,\langle\cdot,\cdot\rangle)$ to be either a quadratic space over $\QQ$ of signature $(n,2)$ (this will be referred to throughout the paper as \emph{Case 1}), or a Hermitian space of signature $(n,1)$ over an imaginary quadratic field $\mathbb{K}$ (this is \emph{Case 2}), and define, in both cases, the associated reductive group $G$ over $\QQ$ as
\[
G:=\begin{cases} \operatorname{O}(V),\ V/\QQ\text{ orthogonal of signature }(n,2),\text{ i.e., Case 1}, \\ \operatorname{U}(V),\ V/\mathbb{K}\text{ Hermitian of signature }(n,1),\text{ i.e., Case 2}. \end{cases}
\]
Let $L \subseteq V$ be an even lattice of maximal rank, with dual $L^{*}$, and let $\Gamma\subseteq G$ be an arithmetic subgroup associated with $L$ as in Section \ref{ShimVar} below.

We denote by $\mathcal{D}$ the Hermitian symmetric domain associated with the connected component of $G$, namely \[\mathcal{D}:=\begin{cases} \operatorname{SO}_{n,2}^{+}(\mathbb{R})/(\operatorname{SO}_{n}(\mathbb{R})\times\operatorname{SO}_{2}(\mathbb{R})),\text{ in Case 1}, \\ \operatorname{U}_{n,1}(\mathbb{R})/(\operatorname{U}_{n}(\mathbb{R})\times\operatorname{U}_{1}(\mathbb{R})),\quad\text{ in Case 2}. \end{cases}\] Then $(G,\mathcal{D})$ is a Shimura datum and $X(\CC)\simeq\Gamma\backslash\mathcal{D}$ is a connected open Shimura variety.

We fix a toroidal compactification $X^{\tor}$ of $X$, which may be assumed to be smooth (see Section \ref{ShimVar} for more details). For $T\in\Sym_{n}(\QQ)_{>0}$ (resp.\ $T\in\operatorname{Herm}_{n}(\QQ)_{>0}$) positive definite and $\ul{\mu}\in(L^{*}/L)^{n}$, the special cycle $Z(T,\ul{\mu})$ is a set of points in $X$, which we view as a cycle on $X^{\tor}$. We recall from \cite{M} that there is a uniquely defined class, which we denote by $[s]\in\CH^{n}(X^{\tor})_{\CC}$, representing any boundary point on $X^{\tor}$ that maps to a 0-dimensional cusp of the Baily-Borel compactification $X^{\BB}$ of $X$. We then define the associated special cycle class in $\CH^{n}(X^{\tor})_{\CC}$ to be \[Z^{\circ}(T,\ul{\mu}):=Z(T,\ul{\mu})-\deg Z(T,\ul{\mu})[s].\]

The first result of the paper is to establish Conjecture \ref{Kudla} for $r=n$ in the orthogonal case. We denote by $\{\mathfrak{e}_{\ul{\mu}}\}_{\ul{\mu}\in(L^{*}/L)^{n}}$ the standard basis for the group algebra $\CC[(L^{*}/L)^{n}]$, and let $\rho_{L,n}:\widetilde{\Sp}_{2n}(\ZZ)\to\operatorname{GL}\big(\CC[(L^{*}/L)^{n}]\big)$ be the corresponding Weil representation of the metaplectic double cover of the symplectic group $\Sp_{2n}(\ZZ)\subseteq\operatorname{GL}_{2n}(\ZZ)$. Our first main result is as follows.
\begin{thm} \label{modorth}
For $\tau$ in the Siegel half-space $\mathcal{H}_{n}$ of degree $n$, the generating series \[F^{\circ}(\tau):=\sum_{\ul{\mu}\in(L^{*}/L)^{n}}\sum_{T\in\Sym_{n}(\QQ)_{\geq0}}Z^{\circ}(T,\ul{\mu}) \cdot q^{T}\mathfrak{e}_{\ul{\mu}},\qquad q^{T}=e^{2\pi i\operatorname{tr}(T\tau)}\] is a Siegel modular form of degree $n$ with coefficients in $\CH^{n}(X^{\tor})_{\CC}$. It has weight $1+\frac{n}{2}$ and representation $\rho_{L,n}$ with respect to $\widetilde{\Sp}_{2n}(\ZZ)$.
\end{thm}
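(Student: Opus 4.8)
The plan is to reduce the modularity statement in codimension $n=\dim X$ to the already-established modularity of special cycles on the \emph{open} Shimura variety $X$ in cohomology, and to understand precisely how the boundary correction term $\deg Z(T,\underline{\mu})\,[s]$ interacts with the modular transformation. The key structural input is that, since $n=\dim X$, the cycles $Z(T,\underline{\mu})$ are $0$-dimensional, so their degrees $\deg Z(T,\underline{\mu})$ are scalars; by the Kudla--Millson theory \cite{KM} the generating series $\sum_{\underline{\mu}}\sum_{T}\deg Z(T,\underline{\mu})\,q^{T}\mathfrak{e}_{\underline{\mu}}$ is already a scalar-valued Siegel modular form of weight $1+\tfrac{n}{2}$ and type $\rho_{L,n}$ (the top-degree case of their cohomological result, where degree is evaluated against the top power of the Kudla--Millson form / Kähler class). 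I would first isolate this scalar series and record its modularity explicitly.

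First I would write $F^{\circ}(\tau) = F(\tau) - \Phi(\tau)\cdot[s]$, where $F(\tau) := \sum_{\underline{\mu},T} Z(T,\underline{\mu})\,q^{T}\mathfrak{e}_{\underline{\mu}}$ is the uncorrected series viewing the $0$-cycles inside $\CH^{n}(X^{\tor})_{\CC}$, and $\Phi(\tau) := \sum_{\underline{\mu},T}\deg Z(T,\underline{\mu})\,q^{T}\mathfrak{e}_{\underline{\mu}}$ is the scalar degree-generating series just discussed. Modularity being tested against linear functionals $l\colon\CH^{n}(X^{\tor})_{\CC}\to\CC$, it suffices to show that for each such $l$ the series $l(F(\tau)) - \Phi(\tau)\,l([s])$ transforms as a Siegel modular form of the stated weight and representation. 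The term $\Phi(\tau)\,l([s])$ is automatically modular, being a scalar modular form times a constant, so the entire content is concentrated in showing that $l(F(\tau))$ is modular \emph{up to} a multiple of $\Phi(\tau)$ — equivalently, that the obstruction to modularity of $l(F(\tau))$ is proportional to the known non-modular (or boundary-supported) behaviour captured by $[s]$.

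The main step, then, is to relate the class of the $0$-cycle $Z(T,\underline{\mu})$ in $\CH^{n}(X^{\tor})_{\CC}$ to its degree together with the universal boundary class $[s]$. Because $X$ is noncompact, a $0$-cycle on the open variety need not be rationally equivalent to a multiple of a fixed point, but on the smooth projective $X^{\tor}$ any two points are rationally equivalent only modulo the kernel of the degree map; the key geometric fact, drawn from \cite{M}, is that all boundary points lying over a $0$-dimensional cusp of $X^{\BB}$ represent the single class $[s]$. I would argue that the modular failure of $l(F(\tau))$ arises entirely from the boundary contributions to the special cycles, and that these contributions are governed by how the $0$-cycles degenerate toward the cusps — precisely the data encoded by $[s]$ and the degrees. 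Concretely, I expect to invoke the construction of the extended classes and a comparison between the Chow-valued series $F(\tau)$ and the cohomology-valued Kudla--Millson series on $X^{\tor}$, using that the cycle-class map and the degree map control the difference.

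The hard part will be controlling the boundary: establishing rigorously that the discrepancy $F(\tau)-\Phi(\tau)[s]$ is genuinely modular in $\CH^{n}(X^{\tor})_{\CC}$, rather than merely modular in cohomology or modulo torsion. This requires showing that the rational equivalence relations on $X^{\tor}$ produced by the boundary geometry are themselves compatible with the modular transformation — i.e., that the ``correction by $[s]$'' exactly cancels the non-holomorphic or non-modular boundary terms appearing in the degenerations of $Z(T,\underline{\mu})$ as $T$ varies. I anticipate that the cleanest route is to build, for each linear functional $l$, an auxiliary Green-function or theta-lift argument (in the spirit of \cite{BZ}, \cite{Zh}) that produces the required rational equivalences on $X^{\tor}$ and simultaneously tracks the boundary, so that the completed series $F^{\circ}(\tau)$ inherits modularity from the archimedean Kudla--Millson form while the $[s]$-correction absorbs precisely the cuspidal defect.
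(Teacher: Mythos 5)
Your overall shape --- complete each $0$-cycle by $-\deg(\cdot)[s]$, use Ma's theorem that all boundary points over $0$-dimensional cusps represent one class $[s]$, and reduce to modularity on the open variety --- matches the paper's strategy, but two of your concrete steps fail. First, the claim that the degree generating series $\Phi(\tau)=\sum_{T,\ul{\mu}}\deg Z(T,\ul{\mu})\,q^{T}\mathfrak{e}_{\ul{\mu}}$ is ``already a scalar-valued Siegel modular form'' by Kudla--Millson is false in the situation at hand: the paper assumes $X$ non-compact throughout, the top-degree cohomology $H^{2n}(X,\CC)$ of the open variety vanishes so the top-degree Kudla--Millson statement is vacuous there, and extracting the degrees amounts to a regularized Siegel--Weil theta integral whose holomorphic part is only expected to be \emph{mock} modular (the introduction says this explicitly, with Zagier's weight $\frac{3}{2}$ Eisenstein series as the prototype). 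Consequently your decomposition $F^{\circ}=F-\Phi\cdot[s]$ with both pieces separately modular cannot be the mechanism: neither the pushed-forward series $F$ viewed in $\CH^{n}(X^{\tor})_{\CC}$ nor $\Phi$ is modular on its own; only the combination is, and a proof must produce that cancellation rather than assume one side away.

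Second, the ``hard part'' you defer is exactly where the content lies, and your sketch would not go through with the plain Chow group. The correct input is the theorem of \cite{BR} that $F(\tau)$ is modular with values in $\CH^{n}(X)_{\CC}$ (not the cohomological Kudla--Millson result), and the issue is transporting rational equivalences from $X$ to $X^{\tor}$: the divisor of a rational function on the closure $\overline{C}$ of a curve $C\subseteq X$ acquires boundary components, and Ma's theorem identifies these with multiples of $[s]$ only when they lie over $0$-dimensional cusps of $X^{\BB}$. In Case 1 the Baily--Borel compactification has $1$-dimensional cusps, so for an arbitrary curve this fails, and the localization-sequence isomorphism $\CH^{n}(X)_{\QQ}\simeq\CH^{n}(X^{\tor})^{0}_{\QQ}$ that works in the unitary case is not available. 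The paper circumvents this by introducing the special Chow groups $\SCH^{n}$, in which rational equivalence is generated only by functions on one-dimensional special Shimura subvarieties whose divisors are supported on special divisors and cusps; such curves meet the boundary only over $0$-dimensional cusps by the Witt index $1$ argument of Corollary \ref{Witt1}. One then checks that the relations used in \cite{BR} and \cite{Zh} all arise from Borcherds products and hence already hold in $\SCH^{n}(X)$ (Theorem \ref{BRSCH}), proves the isomorphism $\SCH^{n}(X)_{\QQ}\simeq\SCH^{n}(X^{\tor})^{0}_{\QQ}$ there, and only at the end maps to $\CH^{n}(X^{\tor})_{\CC}$ via \eqref{SCHgrps}. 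None of this apparatus appears in your proposal, and without it the boundary bookkeeping you describe cannot be carried out.
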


In the unitary case, for $\tau$ in the Hermitian half-space $\mathbb{H}_{n}$ of degree $n$, we define the generating series
\[
F^{\circ}(\tau):=\sum_{\ul{\mu}\in(L^{*}/L)^{n}}\sum_{T\in\operatorname{Herm}_{n}(\QQ)_{\geq0}}Z^{\circ}(T,\ul{\mu})\cdot q^{T}\mathfrak{e}_{\ul{\mu}},\qquad q^{T}:=e^{2\pi i\operatorname{tr}(T\tau)}.
\]
We similarly denote by $\rho_{L,n}:\operatorname{U}_{n,n}(\ZZ)\to\operatorname{GL}(\CC[(L^{*}/L)^{n}])$ the corresponding Weil representation. Using the result of \cite{Xi}, we obtain the following unconditional result.
\begin{thm} \label{modunit}
Assume that $\mathbb{K}=\QQ(\sqrt{d})$ for $d\in\{-1,-2,-3,-7,-11\}$. Then the generating series $F^{\circ}(\tau)$ is a Hermitian modular form of degree $n$, weight $1+n$ and representation $\rho_{L,n}$ with respect to $\operatorname{U}_{n,n}(\ZZ)$, with values in $\CH^{n}(X^{\tor})_{\CC}$.
\end{thm}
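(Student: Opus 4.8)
The plan is to run the argument behind Theorem \ref{modorth} verbatim in the Hermitian setting, replacing the Siegel half-space $\mathcal{H}_n$, the metaplectic group $\widetilde{\Sp}_{2n}(\ZZ)$ and the weight $1+\frac n2$ by the Hermitian half-space $\mathbb{H}_n$, the group $\operatorname{U}_{n,n}(\ZZ)$ and the weight $1+n$, while keeping the Weil representation $\rho_{L,n}$ in the role it plays there. The one genuinely new ingredient is analytic: the conditional modularity of the open generating series needs the convergence of that series, and this is supplied in the unitary case by \cite{Xi}. This is exactly where the hypothesis $\KK=\QQ(\sqrt d)$ with $d\in\{-1,-2,-3,-7,-11\}$ is used, these being precisely the imaginary quadratic fields of class number one to which the extension \cite{Xi} of \cite{BR} applies; for general $\KK$ the statement would remain conditional.

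First I would record the decomposition (with the coefficients for degenerate $T$ defined through the boundary completion exactly as in the orthogonal case)
\[
F^\circ(\tau)=\Phi(\tau)-[s]\cdot E(\tau),
\]
where $\Phi(\tau):=\sum_{\ul{\mu}}\sum_{T\geq 0}Z(T,\ul{\mu})\,q^T\e_{\ul{\mu}}$ gathers the (completed) special $0$-cycles and $E(\tau):=\sum_{\ul{\mu}}\sum_{T\geq 0}\deg Z(T,\ul{\mu})\,q^T\e_{\ul{\mu}}$ is the scalar generating series of their degrees. Since $\CH^n(X^{\tor})_\CC$ is a group of $0$-cycles, I would detect modularity through two families of functionals: those factoring through the restriction $j^*\colon\CH^n(X^{\tor})_\CC\to\CH^n(X)_\CC$ to the open variety, and the degree map $\deg\colon\CH^n(X^{\tor})_\CC\to\CC$. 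Because $j^*[s]=0$, we have $j^*F^\circ=j^*\Phi$, which is the generating series of the special $0$-cycles on the open unitary Shimura variety $X$; its modularity in $\CH^n(X)_\CC$ is the conditional theorem of \cite{Liu}, now unconditional under our hypothesis on $d$ thanks to the convergence established in \cite{Xi}. On the other hand, using $\deg[s]=1$ and $\deg\Phi=E$ we get $\deg F^\circ=E-E=0$, which is trivially modular.

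It then remains to upgrade these two facts to modularity of $F^\circ$ in $\CH^n(X^{\tor})_\CC$. For each generator $\gamma$ of $\operatorname{U}_{n,n}(\ZZ)$ let $R_\gamma(\tau)$ denote the difference between $F^\circ$ and its transform under the weight-$(1+n)$ action of $\gamma$ twisted by $\rho_{L,n}$, so that modularity is the vanishing of all $R_\gamma$. Since $j^*F^\circ$ is modular we have $j^*R_\gamma=0$, hence $R_\gamma$ is a $q$-series whose coefficients lie in $\ker j^*$. Here I would invoke the boundary description recalled from \cite{M}: the localization sequence identifies $\ker j^*$ with the image of the $0$-cycles supported on the toroidal boundary, and since every boundary point mapping to a $0$-dimensional cusp of $X^{\BB}$ has class $[s]$, this contribution is the line $\CC\cdot[s]$. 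As $\deg$ and $\rho_{L,n}$ act on independent tensor factors, $\deg R_\gamma$ is the obstruction for the scalar series $\deg F^\circ=0$ and therefore vanishes; since $\deg$ is an isomorphism on $\CC\cdot[s]$, each coefficient of $R_\gamma$ is zero. Thus $F^\circ$ is a Hermitian modular form of weight $1+n$ and representation $\rho_{L,n}$.

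The main obstacle is twofold. On the analytic side it is the convergence of the open generating series, which is genuinely harder in the unitary case than in the orthogonal one and is the sole reason Theorem \ref{modunit} is restricted to class-number-one fields via \cite{Xi}. On the geometric side the crux is the boundary analysis underlying the two inputs above: the modularity of the degree series $E$, which requires the Hermitian analogue of the Siegel--Weil formula identifying the $\deg Z(T,\ul{\mu})$ with Fourier coefficients of a Hermitian Eisenstein series, and the identification of the boundary contribution to $\ker j^*$ with $\CC\cdot[s]$, for which one must control how the modular symmetries move the special cycles into the boundary of the unitary toroidal compactification, mirroring the argument behind Theorem \ref{modorth}.
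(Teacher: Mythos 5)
Your argument is correct and is essentially the paper's own: the joint injectivity of the restriction to $X$ and the degree map on $\CH^{n}(X^{\tor})_{\QQ}$, obtained from the localization sequence together with Ma's theorem that every boundary point (all of which lie over $0$-dimensional cusps in the unitary case) has class $[s]$, is exactly the content of Proposition \ref{isom0cyc}, which the paper phrases as the isomorphism $\psi:\CH^{n}(X)_{\QQ}\to\CH^{n}(X^{\tor})^{0}_{\QQ}$, and the modularity of the restricted series is Theorem 6.1 of \cite{Xi}. Two small corrections: your closing claim that one needs the modularity of the degree series $E$ via a Hermitian Siegel--Weil formula is extraneous (your own argument only uses $\deg F^{\circ}=0$, and the paper notes that $E$ is expected to be merely mock modular in the non-compact case), and the relevant fields are the norm-Euclidean imaginary quadratic fields rather than all class-number-one fields; moreover the unitary case is the \emph{simpler} one here, since the orthogonal argument cannot be run in the full Chow group and requires the special Chow groups of Section \ref{SiegelMod}.
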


If we assume convergence, then we may remove the assumption on the discriminant in Theorem \ref{modunit}, as follows.
\begin{thm} \label{modafterconv}
If the generating series $F^{\circ}(\tau)$ converges absolutely on $\mathbb{H}_{n}$, then it is a Hermitian modular form of degree $n$, weight $1+n$ and representation $\rho_{L,n}$ with repsect to the unitary group $\operatorname{U}_{n,n}(\ZZ)$, with coefficients in the Chow group $\CH^{n}(X^{\tor})_{\CC}$.
\end{thm}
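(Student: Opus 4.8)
The plan is to deduce the modularity on $X^{\tor}$ from the modularity on the open variety $X$, which holds by \cite{Liu} under the convergence hypothesis, by analysing the difference between the two Chow groups along the boundary. The passage from a series with values in the infinite-dimensional space $\CH^{n}(X^{\tor})_{\CC}$ to a genuine modular form is governed by the Hermitian formal Fourier--Jacobi theory developed in \cite{Xi}, the analogue of \cite{BR}: under the assumed absolute convergence on $\mathbb{H}_{n}$ it suffices to show that $F^{\circ}$ is formally modular of weight $1+n$ and representation $\rho_{L,n}$. Concretely, I would reduce to proving that for every finite linear relation $\sum_{T,\ul{\mu}}\lambda_{T,\ul{\mu}}\,a_{T,\ul{\mu}}=0$ holding among the Fourier coefficients of all such modular forms, the corresponding combination of cycle classes vanishes:
\[
\sum_{T,\ul{\mu}}\lambda_{T,\ul{\mu}}\,Z^{\circ}(T,\ul{\mu})=0\qquad\text{in }\CH^{n}(X^{\tor})_{\CC}.
\]
By linearity this is exactly what is needed to conclude that $l\circ F^{\circ}$ is modular for every functional $l$ on $\CH^{n}(X^{\tor})_{\CC}$, and hence that $F^{\circ}$ itself is modular.

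To establish this vanishing I would first restrict to the open part. Let $j\colon X\hookrightarrow X^{\tor}$ be the open immersion. Since the class $[s]$ is supported on the boundary we have $j^{*}[s]=0$, so that $j^{*}Z^{\circ}(T,\ul{\mu})$ equals the class of $Z(T,\ul{\mu})$ in $\CH^{n}(X)_{\CC}$. By \cite{Liu}, which under the convergence hypothesis is unconditional, the open generating series $\sum_{T,\ul{\mu}}[Z(T,\ul{\mu})]\,q^{T}\e_{\ul{\mu}}$ is a Hermitian modular form of weight $1+n$ and representation $\rho_{L,n}$; applying the relation $(\lambda_{T,\ul{\mu}})$ to its Fourier coefficients gives $\sum_{T,\ul{\mu}}\lambda_{T,\ul{\mu}}\,j^{*}Z^{\circ}(T,\ul{\mu})=0$. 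Consequently the class $\sum_{T,\ul{\mu}}\lambda_{T,\ul{\mu}}\,Z^{\circ}(T,\ul{\mu})$ lies in the kernel of $j^{*}$, hence is represented by a $0$-cycle supported on the boundary $\partial X^{\tor}$.

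The final and decisive step is to show that this boundary class vanishes, and here I would combine two ingredients. First, the scalar generating series $\sum_{T,\ul{\mu}}\deg Z(T,\ul{\mu})\,q^{T}\e_{\ul{\mu}}$ of degrees is itself a modular form of weight $1+n$ and representation $\rho_{L,n}$: its coefficients are weighted representation numbers, computed by a Hermitian Eisenstein series via the Siegel--Weil formula. Applying the relation $(\lambda_{T,\ul{\mu}})$ forces $\sum_{T,\ul{\mu}}\lambda_{T,\ul{\mu}}\deg Z(T,\ul{\mu})=0$; and since $\deg[s]=1$, each $Z^{\circ}(T,\ul{\mu})$ has degree $0$, so the boundary class above has degree $0$. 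Second, I would use the explicit structure of the toroidal boundary near the $0$-dimensional cusps of $X^{\BB}$ together with \cite{M}: the special cycles degenerate only into the boundary strata lying over these cusps, and all boundary points there are rationally equivalent to the single class $[s]$. The boundary class is therefore a scalar multiple of $[s]$, and being of degree $0$ it vanishes, which completes the argument.

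The main obstacle is precisely this last point: one must control which boundary strata the degenerate special cycles $Z(T,\ul{\mu})$ with $\det T=0$ map into, and rule out contributions from positive-dimensional cusps that would produce boundary classes not proportional to $[s]$. This is where the local description of $X^{\tor}$ in the toroidal charts attached to the $0$-dimensional cusps, and the identification of $[s]$ from \cite{M}, do the genuine work, as opposed to the modularity already available on $X$. Once this is in place, Theorem~\ref{modunit} follows at once by supplying the required convergence from \cite{Xi} for the discriminants $d\in\{-1,-2,-3,-7,-11\}$.
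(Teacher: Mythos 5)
Your argument is correct and rests on exactly the same two pillars as the paper's proof: Liu's modularity of the open generating series under the convergence hypothesis, and Ma's theorem that all boundary points of $X^{\tor}$ lying over $0$-dimensional cusps of $X^{\BB}$ represent one and the same class $[s]$. The packaging differs: the paper constructs an explicit isomorphism $\psi:\CH^{n}(X)_{\QQ}\to\CH^{n}(X^{\tor})^{0}_{\QQ}$, $y\mapsto y-\deg(y)\cdot s$ (Proposition \ref{isom0cyc}, proved via the localization sequence and by closing up curves in $X^{\tor}$), and then observes $F^{\circ}=\psi(F)$; you instead verify the linear relations defining modularity directly, using the localization sequence to identify $\ker(j^{*})$ with the image of $\CH_{0}(\partial X^{\tor})$, which by Ma is just $\QQ[s]$, and then kill the multiple of $[s]$ by the degree-$0$ normalization. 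These are the same computation read in two directions, so your route buys nothing essentially new, though it does avoid having to check that $\tilde{\psi}$ descends to rational equivalence. Your worry about positive-dimensional cusps is vacuous here: in the unitary case all cusps of $X^{\BB}$ are $0$-dimensional. One genuine misstatement, fortunately redundant: the claim that $\sum_{T,\ul{\mu}}\deg Z(T,\ul{\mu})\,q^{T}\mathfrak{e}_{\ul{\mu}}$ is a modular form via the Siegel--Weil formula is false in the non-compact setting assumed throughout the paper --- the theta integral lies outside the Weil convergence range and the degree series is only expected to be the holomorphic part of a non-holomorphic Eisenstein series (the paper says this explicitly in the introduction). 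You do not need it: $\deg Z^{\circ}(T,\ul{\mu})=0$ by construction, which is the second reason you give, and that alone forces the boundary class to vanish.
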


In the unitary case, we are able to prove the modularity by presenting an isomorphism between the Chow group $\CH^{n}(X^{\tor})^{0}$ of cohomologically trivial cycles on $X^{\tor}$ and the Chow group $\CH^{n}(X)$. The modularity of the generating series $F^{\circ}(\tau)$ follows from the modularity on the open Shimura variety. In the orthogonal case, we establish a similar isomorphism for the cohomologically trivial classes of the special Chow group $\SCH^{n}(X^{\tor})^{0}$ and the special Chow group $\SCH^{n}(X)$ on the open Shimura variety. This requires defining the special Chow group and relating it to the usual Chow group. The modularity follows again from the modularity (with values in the special Chow group) on the open Shimura variety.

\smallskip

We note that the generating series $F^{\circ}(\tau)$ is not expected to vanish in the unitary case (see Appendix~\ref{NonTriv} for more details). In the orthogonal case, the Beilinson--Bloch conjecture would imply that for large $n$ the group of cohomologically trivial Chow classes $\CH^{n}(X)_{\CC}^{0}$ is trivial, thus the entire generating series is expected to vanish identically. However, this is conjectural and we are not aware of any case where this is known. Moreover, one could hope to use the modularity of the generating series in the orthogonal case in order to prove this particular case of the Beilinson--Bloch conjecture.

\smallskip

Our results show that, when taking a suitable projection to the kernel of the degree map $\deg:\CH^{n}(X^{\tor})_{\CC}\to\CC$, the generating series of special cycles of codimension $n$ maps to a holomorphic modular form. A related question is what can be said about the image of the generating series under the degree map, that is,
\[
\sum_{T,\underline{\mu}}\deg(Z(T,\mu)) \cdot q^{T}\mathfrak{e}_{\ul{\mu}}.
\]
If $X$ is already compact, this generating series can be obtained by integrating the corresponding Kudla-Millson theta function, which is a top degree differential form, over $X$. By the Siegel--Weil formula, this integral is given by a holomorphic Siegel Eisenstein series of genus $n$ and weight $1+\frac{n}{2}$ in Case 1, and by an analogous Hermitian Eisenstein series in Case 2. However, if $X$ is non-compact, which is always the case for large $n$, we are outside the range where the Weil convergence criterion for the theta integral applies, and the Siegel-Weil formula will require a suitable regularization. Correspondingly, it is expected that the generating series of the degrees exhibits a mock modular behavior in this case. More precisely, it should be equal to the holomorphic part of a non-holomorphic Siegel (respectively Hermitian) Eisenstein series. In the case where $n=1$, a typical example is given by Zagier's Eisenstein series of weight $\frac{3}{2}$ defined in \cite{Za}, and its generalizations as considered in \cite{Fu}.

\subsection{Structure of the Paper}

In Section \ref{VVJMF} we define the Weil representations, as well as Siegel and Hermitian modular forms. Section \ref{ShimVar} gives the required notions concerning orthogonal Shimura varieties, and in particular we define the toroidal compactifications and the toroidal special 0-cycles. In Section~\ref{Boundary} we discuss the behavior of the special cycles at the boundary. Section \ref{HermMod} establishes the Hermitian modularity of their generating series in the unitary case, while in Section \ref{SiegelMod} we prove the Siegel modularity. Appendix \ref{BoundBeh} contains some technical results about the boundary behavior of cycles on toroidal compactifications of orthogonal Shimura varieties, and Appendix \ref{NonTriv} provides some remarks regarding the non-triviality of the generating series $F^{\circ}(\tau)$.

\subsection*{Acknowledgements}

We thank S. M\"{u}ller-Stach for useful comments involving the discussion showing up in Appendix \ref{NonTriv}. JHB and ER were supported, resp.\ partially supported, by the Deutsche Forschungs-Gemeinschaft (DFG) through the Collaborative Research Centre TRR 326 ``Geometry and Arithmetic of Uniformized Structures'', project number 444845124.

\section{Vector-Valued Jacobi and Modular Forms \label{VVJMF}}

In this section we will define vector-valued Siegel and Hermitian modular forms, and give the definition of the Weil representation in each case.

\subsection{Siegel Modular Forms and the Weil Representation in Case 1}

For $r\geq1$ and a commutative ring $R$, let $\Sym_{r}(R)$ denote the additive group of symmetric $r \times r$ matrices over $R$, and for subrings of $\RR$ the subscripts $>0$ and $\geq0$ denote positive definite and positive semi-definite matrices respectively. Recall that the \emph{Siegel upper half-space} is defined to be \[\mathcal{H}_{r}:=\{\tau\in\Sym_{r}(\CC)\;|\;\mathrm{Im}\tau\in\Sym_{r}(\RR)_{>0}\}.\] Let $\widetilde{\Sp}_{2r}(\ZZ)$ be the metaplectic group, which is a double cover of the symplectic group $\Sp_{2r}(\ZZ)$. Its elements are pairs $[M,\phi]$, where $M=\left(\begin{smallmatrix} A & B \\ C & D \end{smallmatrix}\right)\in\Sp_{2r}(\ZZ)$ and $\phi:\mathcal{H}_{r}\to\CC$ is a holomorphic function such that $\phi(\tau)^{2}=\det(C\tau+D)$. The metaplectic group $\widetilde{\Sp}_{2r}(\ZZ)$ is generated by the elements
\begin{equation} \label{Sp2rgens}
\begin{split}
m(A):= & \left[\left(\begin{smallmatrix} A & 0 \\ 0 & (A^{t})^{-1} \end{smallmatrix}\right),\sqrt{\det A}\right]\mathrm{\ for\ }A\in\operatorname{GL}_{r}(\ZZ), \\ n(B):= & \left[\left(\begin{smallmatrix} I_{r} & B \\ 0 & I_{r} \end{smallmatrix}\right),1\right]\mathrm{\ for\ }B\in\Sym_{r}(\ZZ), \\ \widetilde{w}_{r}:= & \left[\left(\begin{smallmatrix} 0 & -I_{r} \\ I_{r} & 0 \end{smallmatrix}\right),\sqrt{\det\tau}\right],
\end{split}
\end{equation}
where $\sqrt{\det\tau}$ is the square root of $\det\tau$ that takes the matrix $iI$ to $\mathbf{e}\big(\frac{r}{8}\big)$. Here we have introduced the shorthand $\mathbf{e}(z):=e^{2\pi iz}$ for any $z$ in $\CC$ or in $\CC/\ZZ$.

We let $L$ be a lattice inside a quadratic space $V$ of signature $(n,2)$ over $\QQ$ such that the quadratic form is $\ZZ$-valued on $L$, and let $\{\mathfrak{e}_{\ul{\mu}}\}_{\ul{\mu}\in(L^{*}/L)^{r}}$ be the standard basis for the group ring $\CC[(L^{*}/L)^{r}]$. The group $\widetilde{\Sp}_{2r}(\ZZ)$ acts on the group ring $\CC[(L^{*}/L)^{r}]$ via the associated Weil representation, which we denote by $\rho_{L,r}$. The Weil representation $\rho_{L,r}$ is defined by the action of the generators of $\widetilde{\Sp}_{2r}(\ZZ)$ via the formulae
\begin{equation} \label{Weil}
\begin{split}
\rho_{L,r}\big(m(A)\big)\mathfrak{e}_{\ul{\mu}}= & \ \sqrt{\det A}^{2-n}\mathfrak{e}_{\ul{\mu}A^{-1}}, \\ \rho_{L,r}\big(n(B)\big)\mathfrak{e}_{\ul{\mu}}= & \ \mathbf{e}\Big(\operatorname{tr}(Q(\ul{\mu})B)\Big) \mathfrak{e}_{\ul{\mu}}, \\ \qquad\rho_{L,r}(\widetilde{w}_{r})\mathfrak{e}_{\ul{\mu}}= & \ \frac{\mathbf{e}\big(r\frac{2-n}{8}\big)}{|L^{*}/L|^{r/2}}\sum_{\ul{\nu}\in(L^{*}/L)^{r}} \mathbf{e}\big(-\operatorname{tr}\langle\ul{\mu},\ul{\nu}\rangle\big)\mathfrak{e}_{\ul{\nu}},
\end{split}
\end{equation}
where $\langle\ul{\mu},\ul{\nu}\rangle=(\langle\mu_{i},\nu_{j}\rangle)_{1\leq i,j \leq r}$ and $Q(\ul{\mu})=\big(\tfrac{\langle\mu_{i},\mu_{j}\rangle}{2}\big)_{1\leq i,j \leq r} $ are matrices with coefficients in $\QQ/\ZZ$. Note that while division by 2 is not well-defined in $\QQ/\ZZ$, the $i^{\text{th}}$ diagonal entry of $Q(\ul{\mu})$ is $Q(\mu_{i})$, and the off-diagonal ones are multiplied by an even integer in $\operatorname{tr}(Q(\ul{\mu})B)$, making this expression well-defined. This representation comes from a subrepresentation of the Weil representation in the Schr\"{o}dinger model of the ad\'{e}lic metaplectic group $\widetilde{\Sp}_{2r}(\mathbb{\mathbb{A}})$ on the space of Schwartz--Bruhat functions $\mathcal{S}(V_{\mathbb{A}}^{r})$ (see \cite{Zh}). This is the Weil representation in Case 1.

The case $r=1$ in \eqref{Weil} recovers the well-known Weil representation associated with the lattice $L$ for $\widetilde{\operatorname{SL}}_{2}(\ZZ)$, the double metaplectic cover of $\operatorname{SL}_{2}(\ZZ)$. The element $S_{1}:=\widetilde{w}_{1}$ and the translation element $T_{1}:=\left[\left(\begin{smallmatrix} 1 & 1 \\ 0 & 1 \end{smallmatrix}\right),1\right]$ generate $\widetilde{\operatorname{SL}}_{2}(\ZZ)$, and \eqref{Weil} for $\rho_{L}:=\rho_{L,1}$ reduces to
\[\rho_{L}(T_{1})\mathfrak{e}_{\mu}=\mathbf{e}\big(Q(\mu)\big)\mathfrak{e}_{\mu}\quad\mathrm{and}\quad\rho_{L}(S_{1})\mathfrak{e}_{\mu}=\frac{\mathbf{e}\big(\frac{2-n}{8}\big)}{\sqrt{|L^{*}/L|}}\sum_{\nu \in L^{*}/L}\mathbf{e}\big(-\langle\mu,\nu\rangle\big)\mathfrak{e}_{\nu}.\] For more properties of $\rho_{L}$ see, e.g., \cite{Bo1}, \cite{Br}, and \cite{Ze1}.

\medskip

A holomorphic function $F:\mathcal{H}_{g}\to\CC[(L^{*}/L)^{g}]$ is a \emph{Siegel modular form of weight $k\in\frac{1}{2}\ZZ$ and representation $\rho_{L,g}$ with respect to $\widetilde{\Sp}_{2g}(\ZZ)$} if it satisfies \[F\big((A\tau+B)(C\tau+D)^{-1}\big)=\det(C\tau+D)^{k}\rho_{L,g}(M)F(\tau)\] for every $\tau\in\mathcal{H}_{g}$ and $M=\left[\left(\begin{smallmatrix} A & B \\ C & D \end{smallmatrix}\right),\det(C\tau+D)^{1/2}\right]\in\widetilde{\Sp}_{2g}(\ZZ)$. If $g=1$, then the function $F$ is also required to be holomorphic at the cusp at $\infty$.
Such a Siegel modular form $F$ has a Fourier expansion of the form \[F(\tau)=\sum_{\ul{\mu}\in(L^{*}/L)^{g}}\sum_{T\in\Sym_{g}(\QQ)_{\geq0}}c(T,\ul{\mu})\mathbf{e}\big(\operatorname{tr}(T\tau)\big)\mathfrak{e}_{\ul{\mu}},\] with $c(T,\ul{\mu})\in\CC$.

\subsection{Hermitian Modular Forms and the Weil Representation in Case 2}

In the unitary case, let $\mathbb{K}$ be an imaginary quadratic field, with ring of integers $\mathcal{O}_{\mathbb{K}}$ and different $D_{\mathbb{K}}$. For any $\mathcal{O}_{\mathbb{K}}$-algebra $R$, with the extension of the conjugation of $\mathcal{O}_{\mathbb{K}}$ over $\ZZ$ to $R$, we denote by $\operatorname{Herm}_{r}(R)$ the module of Hermitian $r \times r$ matrices over $R$, with a similar meaning for the subscripts $>0$ and $\geq0$ when $R$ is a subring of $\CC$. The integral unitary group is 
\begin{equation} \label{Urrdef}
\operatorname{U}_{r,r}(\ZZ):=\Big\{A\in\operatorname{GL}_{2r}(\mathcal{O}_{\mathbb{K}})\;\Big|\;A\Big(\begin{smallmatrix} 0 & -I_{r} \\ I_{r} & 0 \end{smallmatrix}\Big)A^{*}=\Big(\begin{smallmatrix} 0 & -I_{r} \\ I_{r} & 0 \end{smallmatrix}\Big)\Big\},
\end{equation}
where $A^{*}:=\overline{A}^{t}$ is the transpose-conjugate of $A$. The group $\operatorname{U}_{r,r}(\ZZ)$ is generated, analogously to \eqref{Sp2rgens}, by the elements
\begin{equation} \label{Ugens}
\begin{split}
m(A):= & \left(\begin{smallmatrix} A & 0 \\ 0 & (A^{*})^{-1} \end{smallmatrix}\right)\mathrm{\ for\ }A\in\operatorname{GL}_{r}(\mathcal{O}_{\mathbb{K}}), \\ n(B):= & \left(\begin{smallmatrix} I_{r} & B \\ 0 & I_{r} \end{smallmatrix}\right)\mathrm{\ for\ }B\in\operatorname{Herm}_{r}(\mathcal{O}_{\mathbb{K}}), \\ w_{r}:= & \left(\begin{smallmatrix} 0 & -I_{r} \\ I_{r} & 0 \end{smallmatrix}\right).
\end{split}
\end{equation}

Consider now a Hermitian vector space $V$ of signature $(n,1)$ over $\mathbb{K}/\QQ$ and an $\mathcal{O}_{\mathbb{K}}$-lattice $L \subseteq V$, where the $\QQ$-valued Hermitian form takes integral values on $L$. Then $L$ is a $\ZZ$-lattice, with dual $L^{*}$, and take $\CC[(L^{*}/L)^{r}]$ and $\{\mathfrak{e}_{\ul{\mu}}\}_{\ul{\mu}\in(L^{*}/L)^{r}}$ as before (for more details on this construction see, e.g., Section 1 of \cite{Ze3}). The Weil representation $\rho_{L,r}:\operatorname{U}_{r,r}(\ZZ)\to\operatorname{GL}\big(\CC[(L^{*}/L)^{r}]\big)$ is again given in terms of the generators from \eqref{Ugens} by
\begin{equation} \label{HermWeil}
\begin{split}
\rho_{L,r}\big(m(A)\big)\mathfrak{e}_{\ul{\mu}}= & \ (\det A)^{-n-1}\mathfrak{e}_{\ul{\mu}A^{-1}}, \\ \rho_{L,r}\big(n(B)\big)\mathfrak{e}_{\ul{\mu}}= & \ \mathbf{e}\big(\operatorname{tr}(Q(\ul{\mu})B)\big)\mathfrak{e}_{\ul{\mu}}, \\ \rho_{L,r}(w_{r})\mathfrak{e}_{\ul{\mu}}= & \ \frac{\gamma_{V,f}}{|L^{*}/L|^{r/2}}\sum_{\ul{\nu}\in(L^{*}/L)^{r}}\mathbf{e}\big(-\frac{1}{2}\operatorname{tr}_{\mathbb{K}/\QQ}\langle\ul{\mu},\ul{\nu}\rangle\big) \mathfrak{e}_{\ul{\nu}},
\end{split}
\end{equation}
where $\gamma_{V,f}$ is an 8th root of unity called the Weil index (restricted to the finite places). Note that $\langle\ul{\mu},\ul{\nu}\rangle$ and $Q(\ul{\mu})$ in \eqref{HermWeil} are now $\mathbb{K}/D_{\mathbb{K}}^{-1}$-valued, but since $Q(\ul{\mu})$ and $B$ are Hermitian, the arguments of the exponents are well-defined in $\QQ/\ZZ$. This representation is constructed from the classical ad\'elic Weil representation in the Schr\"{o}dinger model, analoguously to the orthogonal case treated by \cite{Zh}. We note in particular that the ad\'elic Weil representation comes with an associated quadratic character $\chi:\mathbb{A}_{\mathbb{K}}^{\times}/\mathbb{K}^{\times}\to\CC^{\times}$. However, here we are interested only in the infinite place $\chi_{\infty}:z\mapsto(z/\overline{z})^{n+1}$, which is independent of $\mathbb{K}$, thus the choice of character does not play a role in the representation $\rho_{L,r}$.

We will again write $\rho_{L}:=\rho_{L,1}$ for the Weil representation of $\operatorname{U}_{1,1}(\ZZ)$. We recall from \cite{Ho1} and \cite{Ze3} that the group $\operatorname{U}_{1,1}(\ZZ)$ equals $\mu_{\mathbb{K}}\cdot\operatorname{SL}_{2}(\ZZ)$, where $\mu_{\mathbb{K}}$ is the group of roots of unity of $\mathbb{K}$ viewed as scalar matrices. If the discriminant of $\mathbb{K}$ is smaller than $-4$, then $\operatorname{U}_{1,1}(\ZZ)=\operatorname{SL}_{2}(\ZZ)$. Otherwise an extension of the representation $\rho_{L}$ from $\operatorname{SL}_{2}(\ZZ)$ to $\operatorname{U}_{1,1}(\ZZ)$ is described explicitly in Proposition 2.3 of \cite{Ze3}.

Given $g\geq1$, we note that for $\tau\in\operatorname{M}_{g}(\CC)$ the matrix $\frac{\tau-\tau^{*}}{2i}$ is Hermitian, and we consider the \emph{Hermitian upper half-space} \[\mathbb{H}_{g}:=\left\{\tau\in\operatorname{M}_{g}(\CC)\;|\;\frac{\tau-\tau^{*}}{2i}\in\operatorname{Herm}_{g}(\mathbb{K})_{>0}\right\}.\]
Note that, for $g=1$, we have $\mathbb{H}_{1}=\mathcal{H}_{1}=\mathcal{H}$, while in general the Hermitian half-space $\mathbb{H}_{g}$ contains the Siegel upper half-space $\mathcal{H}_{g}$.

Recall that the integral unitary group $\operatorname{U}_{g,g}(\ZZ)$ is defined in \eqref{Urrdef} as the stabilizer of the matrix $\left(\begin{smallmatrix} 0 & I_{g} \\ -I_{g} & 0 \end{smallmatrix}\right)$ in $\operatorname{GL}_{2g}(\mathcal{O}_{\mathbb{K}})$, and let $\operatorname{U}_{g,g}(\RR)$ be its stabilizer inside $\operatorname{GL}_{2g}(\CC)$. Then $\mathbb{H}_{g}$ is the Hermitian symmetric domain associated with the this Lie group, and the integral group $\operatorname{U}_{g,g}(\ZZ)$ admits the Weil representation $\rho_{L,g}:\operatorname{U}_{g,g}(\ZZ)\to\operatorname{GL}\big(\CC[(L^{*}/L)^{g}]\big)$ defined in \eqref{HermWeil}.

We call a holomorphic function $F:\mathbb{H}_{g}\to\CC[(L^{*}/L)^{g}]$ a \emph{Hermitian modular form of weight $k$ and representation $\rho_{L,g}$ with respect to $\operatorname{U}_{g,g}(\ZZ)$} if it satisfies \[F\big((A\tau+B)(C\tau+D)^{-1}\big)=\det(C\tau+D)^{k}\rho_{L,g}\big(\begin{smallmatrix} A & B \\ C & D \end{smallmatrix}\big)F(\tau)\] for all $\tau\in\mathbb{H}_{g}$ and $\left(\begin{smallmatrix} A & B \\ C & D \end{smallmatrix}\right)\in\operatorname{U}_{g,g}(\ZZ)$. If $g=1$, then the function $F$ is also required to be holomorphic at the cusps. A Hermitian modular form $F$ also admits a Fourier expansion \[F(\tau)=\sum_{\ul{\mu}\in(L^{*}/L)^{g}}\sum_{T\in\operatorname{Herm}_{g}(\mathbb{K})_{\geq0}}c(T,\ul{\mu})\mathbf{e}\big(\operatorname{tr}(T\tau)\big)\mathfrak{e}_{\ul{\mu}},\] where $c(T,\ul{\mu})\in\CC$.

\section{Shimura Varieties and their Special Cycles \label{ShimVar}}

In this section we will define the orthogonal and unitary Shimura varieties and their compactifications, as well as the special cycles on the toroidal compactifications.

\subsection{Basic Definitions}

We will consider here Cases 1 and 2 simultaneously. We define $\mathbb{K}$ to be $\QQ$ in Case 1, and the underlying imaginary quadratic field in Case 2, with a fixed embedding into $\CC$. We recall the ring of integers $\mathcal{O}_{\mathbb{K}}$ and the inverse different $D_{\mathbb{K}}^{-1}$, both of which equal $\ZZ$ in Case 1.

We take $\big(V,\langle\cdot,\cdot\rangle\big)$ to be a vector space over $\mathbb{K}$ which is quadratic of signature $(n,2)$ in Case 1 and Hermitian of signature $(n,1)$ in Case 2 (in particular, the pairing is non-degenerate in both cases). In our convention in Case 2, the pairing is $\mathbb{K}$-linear in the first variable and conjugate-linear in the second variable. We will write $V_{\RR}$ for $V\otimes_{\QQ}\RR$ in both cases, where it carries an $\RR$-valued quadratic form in Case 1 and a $\CC$-valued Hermitian pairing in Case 2.

We denote by $G(V_{\RR})$ the group of isometries of $V_{\RR}$, namely $\operatorname{O}(V_{\RR})$ in Case 1 and $\operatorname{U}(V_{\RR})$ in Case 2, and write $G(V_{\RR})^{\circ}$ for the connected component of the identity in $G(V_{\RR})$. This means that $G(V_{\RR})^{\circ}$ is isomorphic to $\operatorname{SO}_{n,2}^{+}(\RR)$ in Case 1 and to $\operatorname{U}_{n,1}(\RR)$ in Case 2, and thus its symmetric domain, the \emph{Grassmannian} $\mathcal{D}$, is isomorphic to
\[G(V_{\RR})^{\circ}/K\simeq\begin{cases} \operatorname{SO}_{n,2}^{+}(\RR)/[\operatorname{SO}_{n}(\RR)\times\operatorname{SO}_{2}(\RR)], & \text{in Case 1} \\ \operatorname{U}_{n,1}(\RR)/[\operatorname{U}_{n}(\RR)\times\operatorname{U}_{1}(\RR)], & \text{in Case 2}, \end{cases}\] with $K \subseteq G(V_{\RR})^{\circ}$ a maximal compact subgroup. For an explicit description of $\mathcal{D}$, in Case 1 the Grassmannian $\mathcal{D}$ is a connected component of
\begin{equation} \label{bunGrO}
\big\{z\in\mathbb{P}(V_{\CC})\big|\;\langle z,z \rangle=0,\;\langle z,\overline{z} \rangle<0\big\}\subseteq\mathbb{P}(V_{\CC})
\end{equation}
which is determined by fixing the orientations of the 2-dimensional subspaces of $V_{\RR}$. In Case 2 we get
\begin{equation} \label{bunGrU}
\mathcal{D}=\big\{z\in\mathbb{P}(V_{\RR})\big|\;\langle z,z \rangle<0\big\}\subseteq\mathbb{P}(V_{\RR}),
\end{equation}
which is isomorphic to a complex ball of dimension $n$.

\medskip

For $\ul{x}=(x_{1},\dots,x_{r}) \in V^{r}$, we define $Q(\ul{x})=(\frac{1}{2}\langle x_{j},x_{i} \rangle)_{1 \leq i,j \leq r}$ to be the intersection matrix. For $r=1$, we get the quadratic form $Q(y):=\frac{1}{2}\langle y,y \rangle\in\QQ$ for any $y \in V$.

A finitely generated $\mathcal{O}_{\mathbb{K}}$-module $L$ of full rank inside $V$ is called an \emph{even lattice} if $Q(\lambda) \in D_{\mathbb{K}}^{-1}$ for every $\lambda \in L$ (as $Q(y)\in\QQ$ for $y \in V$, this means that if $\lambda \in L$ then $Q(\lambda)$ must lie in $\ZZ$ in Case 1 and in $\frac{1}{2}\ZZ$ in Case 2---see Remark \ref{Hermtoquad} below). For such $L$, its \emph{dual lattice} is $L^{*}:=\{\lambda \in V\;|\;\langle\lambda, L\rangle \subseteq D_{\mathbb{K}}^{-1}\} \subseteq V$. We also take a finite index subgroup $\Gamma$ of the \emph{stable orthogonal group} $\operatorname{O}(L)$ of $L$, namely
\[
\Gamma\subseteq\big\{\gamma \in G(V_{\RR})^{\circ}\;\big|\;\gamma y-y \in L\ \forall y \in L^{*}\big\}=:\operatorname{O}(L).
\] 
The action of the subgroup $\Gamma$ on $\mathcal{D}$ is properly discontinuous, and the quotient yields the open connected Shimura variety \[X:=\Gamma\backslash\mathcal{D},\] which is a complex orbifold of dimension $n$.

\begin{rmk} \label{Hermtoquad}
In Case 2, the pairing defined by $(x,y):=\operatorname{tr}_{\mathbb{K}/\QQ}\langle x,y \rangle$ for $x, y\in V$ endows $V$, as a vector space over $\QQ\subseteq\mathbb{K}$, with the structure of a rational quadratic space. Our condition on $L$ to be an even lattice is equivalent to it becoming an even lattice in this restriction of scalars, and the resulting dual lattice $\{\lambda \in V\;|\;\operatorname{tr}_{\mathbb{K}/\QQ}\langle\lambda, L\rangle\subseteq\ZZ\} \subseteq V$ coincides with our $L^{*}$.
\end{rmk}

\subsection{Boundary and Compactifications}

In Case 1, the boundary of \eqref{bunGrO} is obtained by replacing the inequality there by an equality, namely we have \[\partial\mathcal{D}\subseteq\big\{z\in\mathbb{P}(V_{\CC})\big|\;\langle z,z \rangle=0,\;\langle z,\overline{z} \rangle=0\big\}.\] An element $z\in\partial\mathcal{D}$ lifts to an element $\tilde{z} \in V_{\CC}$ satisfying these equalities. The real and imaginary parts of $\tilde{z}$ span an isotropic space $I_{z} \subseteq V_{\RR}$ of dimension 1 or 2, which is independent on the choice of $\tilde{z}$. If $I_{z}$ is an isotropic plane, then $z$ will be in the boundary $\partial\mathcal{D}$ when the resulting orientation on $I_{z}$ matches the one that yields the connected component $\mathcal{D}$. For any isotropic plane $J \subseteq V_{\RR}$, the set of $z\in\partial\mathcal{D}$ with $I_{z}=J$ is a 1-dimensional complex manifold which is isomorphic to $\mathcal{H}$. In the case of $I_{z}$ an isotropic line, $z$ lies in $\partial\mathcal{D}$, and for an isotropic line $J \subseteq V_{\RR}$ there is a unique $z\in\partial\mathcal{D}$ with $I_{z}=J$, yielding a 0-dimensional boundary component of $\mathcal{D}$.

In Case 2, the boundary $\partial\mathcal{D}$ of $\mathcal{D}$ from \eqref{bunGrU} is similarly given by the corresponding equality, namely \[\partial\mathcal{D}=\big\{z\in\mathbb{P}(V_{\RR})\big|\;\langle z,z \rangle=0\big\}.\] For an element $z\in\partial\mathcal{D}$, choose any lift $\tilde{z} \in V_{\RR}$ of $z$, yielding the associated isotropic line $I_{z} \subseteq V_{\RR}$, which is independent of $\tilde{z}$. For every such isotropic line $J \subseteq V_{\RR}$ there is an associated 0-dimensional boundary component of $\mathcal{D}$, consisting of the unique $z\in\partial\mathcal{D}$ with $I_{z}=J$.

We call an element $z\in\partial\mathcal{D}$ \emph{rational} if the corresponding isotropic subspace $I_{z} \subseteq V_{\RR}$ is rational, meaning that $I_{z}=I_{\RR}$ for an isotropic subspace $I \subseteq V$ over $\mathbb{K}$ (with $I_{\RR}$ again meaning $I\otimes_{\QQ}\RR$). The \emph{Baily--Borel compactification} $\mathcal{D}^{\BB}$ of $\mathcal{D}$ is defined to be $\mathcal{D}\cup\{z\in\partial\mathcal{D}\;|\;z\text{ is rational}\}$. Moveover, as the action of $\Gamma$ naturally extends to $\mathcal{D}^{\BB}$, we define the \emph{Baily--Borel compactification} of $X$ to be \[X^{\BB}:=\Gamma\backslash\mathcal{D}^{\BB},\] which is the complete algebraic variety over $\CC$ yielding the minimal algebraic compactification of $X$. The boundary components of both $\mathcal{D}^{\BB}$ and $X^{\BB}$ are called \emph{cusps}, and they can be 0-dimensional or 1-dimensional, with only the first type existing in Case 2. From Meyer's Theorem we deduce, in Case 1, that $X^{\BB}$ has 0-dimensional cusps when $n\geq3$ and also 1-dimensional cusps once $n\geq5$, while $X^{\BB}$ contains 0-dimensional cusps in Case 2 wherever $n\geq2$. Note that if there are 1-dimensional cusps on $X^{\BB}$ in Case 1, then their closures contain 0-dimensional cusps.

\medskip

In general, the cusps of the Baily--Borel compactification $X^{\BB}$ are highly singular. One way to resolve these singularities is by replacing it by a toroidal compactification $X^{\tor}$, also containing $X$ as an open subvariety. In Case 2 the toroidal compactification $X^{\tor}$ is canonical, and smooth if $\Gamma$ is neat (see, e.g., \cite{Liu}). In Case 1 the details of the construction, which depend on certain combinatorial data $\Sigma$ (a \emph{cone decomposition}), are given in \cite{Fi} and \cite{BZ}, and if $\Gamma$ is neat then it is possible to choose $\Sigma$ such that $X^{\tor}$ is smooth.

Moreover, the toroidal compactification $X^{\tor}$ comes with a natural map to the minimal compactification $X^{\BB}$ that we denote by
\begin{equation} \label{Xtornot}
\pi:X^{\tor} \to X^{\BB}.
\end{equation}
In Case 1, the preimage of a 0-dimensional cusp depends on the cone decomposition $\Sigma$, while that of a 1-dimensional cusp is canonical (i.e., independent of $\Sigma$) and resembles an open Kuga--Sato variety (see, e.g., \cite{Ze2}). In Case 2, the preimage of any cusp is a (possibly reducible) Abelian variety of dimension $n-1$, which is isogenous to the product of $n-1$ elliptic curves with CM from $\mathbb{K}$.

\subsection{Shimura Subvarieties}

Fix $0 \leq r \leq n$, and take an $r$-tuple $\ul{x}:=(x_{1},\ldots,x_{r}) \in V^{r}$ such that the subspace $\mathbb{K}\ul{x} \subseteq V$ spanned by the $x_{i}$'s over $\mathbb{K}$ is positive definite. We denote by $r(\ul{x})$ the dimension of $\mathbb{K}\ul{x}$ over $\mathbb{K}$, and define the complementary vector space \[V_{\ul{x}}:=\langle\ul{x}\rangle^{\perp}=\{v \in V\;|\;\langle v,x_{i} \rangle=0\ \forall\ 1 \leq i \leq r\},\] which is quadratic of signature $\big(n-r(\ul{x}),2\big)$ in Case 1 and Hermitian of signature $\big(n-r(\ul{x}),1\big)$ in Case 2, with a similar extension of scalars $V_{\ul{x},\RR} \subseteq V_{\RR}$. The group of isometries $G(V_{\ul{x},\RR})$ can be identified with the pointwise stabilizer of $\ul{x}$ in $G(V_{\RR})$. Moreover, $G(V_{\ul{x},\RR})$ is isomorphic to $\operatorname{O}_{n-r(x),2}(\RR)$ in Case 1 and to $\operatorname{U}_{n-r(x),1}(\RR)$ in Case 2. Its symmetric space, or more precisely that of its identity component $G(V_{\ul{x},\RR})^{\circ}$, is a complex submanifold of dimension $n-r(\ul{x})$, which can be identified with the submanifold \[\mathcal{D}_{\ul{x}}:=\{z\in\mathcal{D}\;|\;\langle z,x_{i} \rangle=0\ \forall\ 1 \leq i \leq r\}\subseteq\mathcal{D}.\]

The natural embedding of $V_{\ul{x}}$ into $V$ also takes the isotropic subspaces of $V_{\ul{x}}$ to isotropic subspaces of $V$. Thus the Baily--Borel compactification $\mathcal{D}_{\ul{x}}^{\BB}$ embeds into $\D^{\BB}$, and in fact we have
\begin{equation} \label{perpDBB}
\mathcal{D}_{\ul{x}}^{\BB}:=\{z\in\D^{\BB}\;|\;\langle z,x_{i} \rangle=0\ \forall\ 1 \leq i \leq r\},
\end{equation}
extending the relation for the open domains.

The group acting on $\mathcal{D}_{\ul{x}}$ and $\mathcal{D}_{\ul{x}}^{\BB}$ is the pointwise stabilizer of $\ul{x}$ in $\Gamma$, namely \[\Gamma_{\ul{x}}:=\Gamma \cap G(V_{\ul{x},\RR})^{\circ}=\{\gamma\in\Gamma\;|\;\gamma x_{i}=x_{i}\ \forall\ 1 \leq i \leq r\}.\] The corresponding open Shimura subvariety and its Baily--Borel compactification are
\[
X_{\ul{x}}:=\Gamma_{\ul{x}}\backslash\mathcal{D}_{\ul{x}} \qquad\mathrm{and}\qquad X_{\ul{x}}^{\BB}:=\Gamma_{\ul{x}}\backslash\mathcal{D}_{\ul{x}}^{\BB},
\]
respectively. The closed embeddings $\mathcal{D}_{\ul{x}}\hookrightarrow\mathcal{D}$ and of $\mathcal{D}_{\ul{x}}^{\BB}\hookrightarrow\D^{\BB}$ yield natural proper maps
\begin{equation} \label{embvecx}
\iota_{\ul{x}}:X_{\ul{x}} \to X\qquad\mathrm{and}\qquad\iota_{\ul{x}}^{\BB}:X_{\ul{x}}^{\BB} \to X^{\BB},
\end{equation}
which are injective when $\Gamma$ is neat.

For the toroidal compactifications, we recall that in Case 2 the construction is canonical, so that we also obtain the variety $X_{\ul{x}}^{\tor}$. In Case 1, \cite{H} shows that for any cone decomposition $\Sigma$ for $X$, there is a natural choice of a cone decomposition $\Sigma_{\ul{x}}$ for $X_{\ul{x}}$. In both cases we have, as in \eqref{Xtornot}, a natural projection map $\pi_{\ul{x}}:X_{\ul{x}}^{\tor} \to X_{\ul{x}}^{\BB}$. Moreover, the map $\iota_{\ul{x}}$ from \eqref{embvecx} extends to a closed, proper map between the resulting compactifications, which we denote by
\begin{equation} \label{iotator}
\iota_{\ul{x}}^{\tor}:X_{\ul{x}}^{\tor} \to X^{\tor}.
\end{equation}
This map is also injective when $\Gamma$ is neat, and it commutes with the maps $\iota_{\ul{x}}^{\BB}$, $\pi$, and its counterpart $\pi_{\ul{x}}$. As $X_{\ul{x}}^{\tor}$ has codimension $r(\ul{x})$ in $X^{\tor}$, we obtain an induced pushforward map
\begin{equation} \label{pushforward}
\iota_{\ul{x},*}^{\tor}:\CH^{1}(X_{\ul{x}}^{\tor})\to\CH^{r(\ul{x})+1}(X^{\tor}).
\end{equation}

\medskip

The \emph{special cycle} $Z(\ul{x})$ on the open Shimura variety $X$ is the image of the canonical morphism $\iota_{\ul{x}}:X_{\ul{x}} \to X$. We will denote by $Z(\ul{x})$ also the closure of the special cycle in $X^{\BB}$ and by $\overline{Z(\ul{x})}$ its closure in $X^{\tor}$. The latter coincides (through the construction from \cite{H} in Case 1 and by canonicity in Case 2) with the image of $\iota_{\ul{x}}^{\tor}$ from \eqref{iotator}. We shall also write $[Z(\ul{x})]$ and $[\overline{Z(\ul{x})}]$ for the corresponding classes in the Chow group $\CH^{r(\ul{x})}(X)$ or $\CH^{r(\ul{x})}(X^{\tor})$.

\subsection{Special 0-cycles}

On the open Shimura variety $X$, special 0-cycles are obtained by intersecting the cycle $Z(\ul{x})$ of codimension $r(\ul{x})$, with $n-r(\ul{x})$ sections of the line bundle $\LL^{\vee}$, the dual of the \emph{tautological bundle} $\LL$ on $X$.

We are interested in defining special $0$-cycles on the toroidal compactification $X^{\tor}$ in a way that is compatible with the pushforward coming from the embedding of $X$ into $X^{\tor}$. We denote by $\LL_{\tor}$ the tautological bundle on $X^{\tor}$, and its dual by $\LL_{\tor}^{\vee}$. Proposition \ref{dimbd} from Appendix \ref{BoundBeh} shows that there is a choice of successive sections of $\LL_{\tor}^{\vee}$ such that the intersection of $\overline{Z(\ul{x})}$ with the divisors of these sections is a collection of points that lie in $X$. For every $\ul{x}\in V^n$ we make such a choice, and write, by a slight abuse of notation,
\[
\overline{Z(\ul{x})}\cdot(\LL_{\tor}^{\vee})^{n-r(\ul{x})}
\] to be the resulting 0-cycle on $X^{\tor}$. By restricting everything to $X$, so that the sections are now of $\LL^{\vee}$, we obtain a 0-cycle on $X$ that we denote similarly by $Z(\ul{x})\cdot(\LL^{\vee})^{n-r(\ul{x})}$. It is clear that pushing forward $Z(\ul{x})\cdot(\LL^{\vee})^{n-r(\ul{x})}$ from $X$ to $X^{\tor}$ then yields $\overline{Z(\ul{x})}\cdot(\LL_{\tor}^{\vee})^{n-r(\ul{x})}$.

We note here that different choices of secions will give different cycles. However, we are only interested in the images of these cycles in the Chow groups $\CH^{n}(X)$ and $\CH^{n}(X^{\tor})$ or in the special Chow groups defined in Section \ref{SiegelMod}. It follows from Proposition \ref{dimbd} that every such cycle represents the class $[Z(\ul{x})]\cdot[\LL^{\vee}]^{n-r(\ul{x})}$ or $[\overline{Z(\ul{x})}]\cdot[\LL_{\tor}^{\vee}]^{n-r(\ul{x})}$, independently of the choices.

Consider now a matrix $T\in\Sym_{n}(\QQ)_{\geq0}$ in Case 1 (resp.\ $T\in\operatorname{Herm}_{n}(\QQ)_{\geq0}$ in Case 2) and an element $\ul{\mu}\in(L^{*}/L)^{n}$, and define the special cycle $Z(T,\ul{\mu})$ on $X$ to be
\begin{equation} \label{spcycop}
Z(T,\ul{\mu})=\sum_{\ul{x}\in\Gamma \backslash L_{T,\ul{\mu}}}Z(\ul{x})\cdot(\LL^{\vee})^{n-r(\ul{x})},
\end{equation}
where $L_{T,\ul{\mu}}=\{\ul{x}\in\ul{\mu}+L^{n}\;|\;Q(\ul{x})=T\}$.

For the rest of this paper we will assume that $X$ is non-compact. Let $s$ be a point on the boundary of $X^{\tor}$ which maps to a cusp of dimension 0 on $X^{\BB}$. Then for $T\in\Sym_{n}(\QQ)_{\geq0}$ (resp.\ $T\in\Herm_{n}(\mathbb{K})_{\geq0}$) and $\ul{\mu}\in(L^{*}/L)^{n}$ we can define the \emph{toroidal special 0-cycle}
\begin{equation} \label{spcyctor}
Z^{\circ}(T,\ul{\mu})
=
\sum_{\ul{x}\in\Gamma \backslash L_{T,\ul{\mu}}}\overline{Z(\ul{x})}\cdot(\LL_{\tor}^{\vee})^{n-r(\ul{x})}-\deg\big(\overline{Z(\ul{x})}\cdot(\LL_{\tor}^{\vee})^{n-r(\ul{x})}\big)[s]\in\CH^{n}(X^{\tor})_{\QQ}.
\end{equation}
From a result of \cite{M} (see Theorem \ref{pt0cusp} in Appendix \ref{BoundBeh}), when $s$ maps to a cusp of dimension 0 in $X^{\BB}$, the class $[s]\in\CH^{n}(X^{\tor})_{\QQ}$ does not depend on the choice of the point $s$.

We note that when $T$ is positive definite, $Z(T,\ul{\mu})$ is a naturally defined special 0-cycle on $X$. If we identify it with its pushforward to $X^{\tor}$, then
\[
Z^{\circ}(T,\ul{\mu})=Z(T,\ul{\mu})-\deg(Z(T,\ul{\mu}))[s].
\]
An alternative way to define the special cycles from \eqref{spcyctor} for all positive semi-definite $T$ is by using the toroidal divisors from \cite{BZ} in Case 1 and from \cite{BHKRY} in Case 2. Corollary \ref{altbys} yields the equivalence of these definitions.

Finally, using the special cycles from \eqref{spcyctor}, we define the formal generating series
\begin{equation} \label{unifgenser}
F^{\circ}(\tau)=\sum_{\ul{\mu}\in (L^{*}/L)^{n}}\sum_{T}Z^{\circ}(T,\ul{\mu})q^{T}\mathfrak{e}_{\mu},\quad q^{T}:=\mathbf{e}\big(\operatorname{tr}(T\tau)\big),
\end{equation}
where $\tau$ is the Siegel upper half space $\mathcal{H}_{n}$ and $T\in\Sym_{n}(\QQ)_{\geq0}$ in Case 1, while in Case 2 the variable $\tau$ is from the Hermitian upper half space $\mathbb{H}_{n}$, and $T$ is taken from $\operatorname{Herm}_{n}(\mathbb{K})_{\geq0}$.

\section{Intersection with the Boundary \label{Boundary}}

We recall that the toroidal compactification $X_{\ul{x}}^{\tor}$ of the variety $X_{\ul{x}}$ consists of the union of the open variety $X_{\ul{x}}$ and a finite collection of boundary divisors. In Case 1, we write
\[
X_{\ul{x}}^{\tor}=X_{\ul{x}}\cup\bigcup B_{\ul{x},I,\omega}\cup\bigcup B_{\ul{x},J},
\] 
where the boundary divisor $B_{\ul{x},J}$ corresponds to the primitive isotropic sublattice $J \subseteq L_{\ul{x}}$ of rank 2, while $B_{\ul{x},I,\omega}$ is the boundary divisor associated with the primitive isotropic sublattice $I \subseteq L_{\ul{x}}$ of rank 1 together with a generator $\omega$ of an internal ray in the rational cone decomposition of the cone associated with $I$. The unions are over representing sets of such $J$'s, $I$'s, and $\omega$'s modulo the action of the group $\Gamma_{\ul{x}}$ (see \cite{Fi} and \cite{BZ} for further details). The boundary divisors of the toroidal compactification $X^{\tor}$ arise from the case $r=0$ and are denoted similarly, with the subscript $\ul{x}$ omitted.

In Case 2, the canonical toroidal compactification $X_{\ul{x}}^{\tor}$ can be written as \[X_{\ul{x}}^{\tor}=X_{\ul{x}}\cup\bigcup B_{I,\ul{x}},\] where $I$ runs over a set of representatives for the action of $\Gamma_{\ul{x}}$ on the set of primitive isotropic submodules of $L_{\ul{x}}$, and $B_{\ul{x},I}$ is the canonical boundary divisor associated with $I$. Note that $B_{\ul{x},I}$ is an Abelian variety of dimension $n-r(\ul{x})-1$, lying over the corresponding cusp of $X_{\ul{x}}^{\BB}$. When $r=0$ and $\ul{x}$ is trivial, we obtain the divisors $B_{I}$ on the full toroidal variety $X^{\tor}$.

To ease certain arguments below, we replace each isotropic module $I$ or $J$ in these definitions by the isotropic subspace of $V$ spanned by it, and use the same notation $I$ or $J$ for this vector space as well.

\medskip

Note that in principle, a cycle of $X^{\BB}$ may intersect a 1-dimensional cusp and not contain it fully. However, this will never be the case for the image of the map $\iota_{\ul{x}}^{\BB}$ from \eqref{embvecx}, as we will show below. This is due to the observation that a 1-dimensional cusp $s$ of $X^{\BB}$ is an open modular curve, thus its own cusps (which correspond to 0-dimensional cusps on the boundary of $X$) are not contained in $s$, but only in its closure.

For a cusp $s$ of $X^{\BB}$, we denote by $\widetilde{s}$ a cusp of $\mathcal{D}^{\BB}$ that projects onto $s$, and denote by $J_{s} \subseteq V$ the isotropic subspace of $V$ corresponding to $\widetilde{s}$.
\begin{lem} \label{cuspsvecx}
The cusp $s$ of $X^{\BB}$ intersects the image of $X_{\ul{x}}^{\BB}$ under $\iota_{\ul{x}}^{\BB}$ if and only if there is an element $\gamma\in\Gamma$ such that $\gamma J_{s} \subseteq V_{\ul{x}}$. When this happens, the image of $X_{\ul{x}}^{\BB}$ contains all of $s$.
\end{lem}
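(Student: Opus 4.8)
The plan is to reduce everything to a statement about $\Gamma$-orbits of boundary components of $\mathcal{D}^{\BB}$ and then translate the defining orthogonality conditions of $\mathcal{D}_{\ul{x}}^{\BB}$ into a containment of isotropic subspaces. First I would unwind the definitions: since $s=\Gamma\widetilde{s}$ and $\iota_{\ul{x}}^{\BB}$ is induced by the inclusion $\mathcal{D}_{\ul{x}}^{\BB}\hookrightarrow\mathcal{D}^{\BB}$ followed by the projection $\mathcal{D}^{\BB}\to X^{\BB}$, the image $\iota_{\ul{x}}^{\BB}(X_{\ul{x}}^{\BB})$ is the set of $\Gamma$-orbits of cusps lying in $\mathcal{D}_{\ul{x}}^{\BB}$. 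Hence $s$ meets this image precisely when some point of some $\Gamma$-translate $\gamma\widetilde{s}$ lies in $\mathcal{D}_{\ul{x}}^{\BB}$. Since $\gamma$ ranges over $\Gamma\subseteq G(V_{\RR})^{\circ}$, the translate $\gamma\widetilde{s}$ is again a genuine boundary component of $\mathcal{D}^{\BB}$ (the orientation in Case 1 being preserved), and by functoriality of $I_{(\cdot)}$ under isometries its associated isotropic subspace is $\gamma J_{s}$.

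The key step is a dictionary between the orthogonality conditions and subspace containment. By \eqref{perpDBB}, a boundary point $w\in\mathcal{D}^{\BB}$ lies in $\mathcal{D}_{\ul{x}}^{\BB}$ iff $\langle w,x_{i}\rangle=0$ for all $i$. Fixing a lift $\tilde{w}$, this condition is equivalent to $I_{w}\subseteq V_{\ul{x},\RR}$: in Case 1 one separates the real and imaginary parts of $\tilde{w}\in V_{\CC}$, which span $I_{w}$ and are both orthogonal to the real vectors $x_{i}$; in Case 2 one uses directly that $I_{w}=\CC\tilde{w}$ and that the pairing is linear in its first argument. Applying this to $w\in\gamma\widetilde{s}$, for which $I_{w}=\gamma J_{s}\otimes\RR$, and using that $\widetilde{s}$ is rational (so $\gamma J_{s}$ is a rational isotropic subspace) while $V_{\ul{x}}$ is rational as well, the real containment $\gamma J_{s}\otimes\RR\subseteq V_{\ul{x},\RR}$ upgrades to $\gamma J_{s}\subseteq V_{\ul{x}}$. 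This gives the asserted equivalence: $s$ meets the image iff $\gamma J_{s}\subseteq V_{\ul{x}}$ for some $\gamma\in\Gamma$ (in Case 1 the signature $(n-r(\ul{x}),2)$ of $V_{\ul{x}}$ leaves room for isotropic planes, whereas in Case 2 only isotropic lines occur, consistently with all cusps being $0$-dimensional).

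The final assertion then comes essentially for free. The point is that the conditions $\langle w,x_{i}\rangle=0$ depend on $w$ only through its isotropic subspace $I_{w}$, which is constant and equal to $\gamma J_{s}$ along the entire boundary component $\gamma\widetilde{s}$ (recall that for an isotropic plane $J$ the locus $\{z\in\partial\mathcal{D}:I_{z}=J\}$ is the whole $1$-dimensional component $\simeq\mathcal{H}$, and likewise a single point in the isotropic-line case). Consequently membership in $\mathcal{D}_{\ul{x}}^{\BB}$ is all-or-nothing along $\gamma\widetilde{s}$: if one point lies in $\mathcal{D}_{\ul{x}}^{\BB}$, then $\gamma\widetilde{s}\subseteq\mathcal{D}_{\ul{x}}^{\BB}$. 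Projecting back to $X^{\BB}$, the image of $X_{\ul{x}}^{\BB}$ therefore contains the full orbit $\Gamma\gamma\widetilde{s}=\Gamma\widetilde{s}=s$.

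I expect the main obstacle to be this second part rather than the equivalence itself: one must argue cleanly that the orthogonality conditions cut out $\mathcal{D}_{\ul{x}}^{\BB}$ uniformly along each $1$-dimensional boundary component, which is exactly the phenomenon anticipated in the discussion preceding the lemma (a cusp is either disjoint from or entirely contained in the image, never partially met). The only other point needing care is the passage from real to rational containment of isotropic subspaces, which relies on the rationality of $\widetilde{s}$ and of $V_{\ul{x}}$ together with the fact that $\Gamma$ acts by $\mathbb{K}$-rational isometries.
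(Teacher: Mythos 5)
Your proposal is correct and follows essentially the same route as the paper's proof: both reduce the question to whether some $\Gamma$-translate of the boundary component lies in $\mathcal{D}_{\ul{x}}^{\BB}$, and both observe that the orthogonality conditions $\langle z,x_{i}\rangle=0$ depend on $z$ only through $I_{z}=J_{s}$ (via the real and imaginary parts of a lift spanning $J_{s}$ in the $1$-dimensional case), which makes membership all-or-nothing along each cusp. Your explicit remark on upgrading the real containment $\gamma J_{s}\otimes\RR\subseteq V_{\ul{x},\RR}$ to the rational one is a point the paper leaves implicit, but it is a correct and harmless addition.
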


\begin{proof}
From the definition in \eqref{perpDBB}, $\mathcal{D}_{\ul{x}}^{\BB}$ contains only cusps that correspond to isotropic subspaces that are perpendicular to $\ul{x}$, and such cusps are fully contained in $\mathcal{D}_{\ul{x}}^{\BB}$. Moreover, given $z\in\mathcal{D}^{\BB}$, its image in $X^{\BB}$ lies in $\iota^{\BB}_{\ul{x}}(X_{\ul{x}})$ if and only if there exists $\gamma\in\Gamma$ such that $\gamma z\in\mathcal{D}_{\ul{x}}^{\BB}$, i.e., such that $\gamma z\in\mathbb{P}(V_{\CC})$ is perpendicular to $\ul{x}$.

The result is clear when $s$ is a 0-dimensional cusp, since any lift of the corresponding point $z\in\mathcal{D}^{\BB}$ spans the associated space $J_{s}$ over $\mathbb{K}$. Now consider $s$ to be a 1-dimensional cusp of $X^{\BB}$ in Case 1, so that $J_{s}$ is 2-dimensional. For every point $z\in\mathcal{D}^{\BB}$ that lies in the cusp $\widetilde{s}$ associated with $J_{s}$, and every lift $\tilde{z}$ of $z$ to $V_{\CC}$, the real and imaginary parts of $\tilde{z}$ form a basis for $J_{s}$, and thus $\langle z,\ul{x} \rangle=0$ is equivalent to $\ul{x}$ being perpendicular to all of $J_{s}$. In particular the cusp $\widetilde{s}$ of $\mathcal{D}^{\BB}$ is fully contained in $\mathcal{D}_{\ul{x}}^{\BB}$ when $J \subseteq V_{\ul{x}}$, and it is disjoint from $\mathcal{D}_{\ul{x}}^{\BB}$ otherwise. Adding the action of $\Gamma$ as in the previous paragraph yields the desired result. This proves the lemma.
\end{proof}
We comment on the case where the 2-dimensional space $J_{s}$ intersects $V_{\ul{x}}$ in a 1-dimensional subspace. Then $I:=J_{s} \cap V_{\ul{x}} \subseteq J_{s}$ is a rational isotropic line, and the corresponding 0-dimensional cusp $\widetilde{s}_{0}$ of $\mathcal{D}^{\BB}$ lies in the closure of the 1-dimensional cusp $\tilde{s}$ associated with $J_{s}$, but not inside the cusp $\tilde{s}$ itself. It follows that $\mathcal{D}_{\ul{x}}^{\BB}$ intersects the closure of our 1-dimensional cusp $s$ precisely at this 0-dimensional cusp, but does not intersect the open cusp $s$ at all. Thus if $\gamma J \not\subseteq V_{\ul{x}}$ for any $\gamma\in\Gamma$, then the closure of $s$ in $X^{\BB}$ will intersect $\iota_{\ul{x}}^{\BB}(X_{\ul{x}}^{\BB})$ at the images of the 0-dimensional cusps $s_{0}$ associated with such subspaces $I$, but none of these intersection points will lie inside $s$ itself.

\medskip

It is clear that if the space $V_{\ul{x}}$ is indefinite and anisotropic, then $\mathcal{D}_{\ul{x}}$ has positive dimension and no rational boundary components, and $X_{\ul{x}}$ is compact. It is thus equal to all of its compactifications such as $X_{\ul{x}}^{\BB}$ and $X_{\ul{x}}^{\tor}$, and its image under $\iota_{\overline{x}}$ is a compact subset of $X$. We note that in Case 2 this can only happen when $r(\ul{x})=n-1$.

In Case 1, Lemma \ref{cuspsvecx} also yields the following immediate consequence.
\begin{cor} \label{Witt1}
When $V_{\ul{x}}$ has Witt index 1, each cusp of $X_{\ul{x}}^{\BB}$ maps to a 0-dimensional cusp of $X^{\BB}$. In particular, this happens wherever $X_{\ul{x}}$ is a non-compact curve.
\end{cor}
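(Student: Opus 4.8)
The plan is to read off both assertions from the correspondence between cusps and rational isotropic subspaces, using the fact (recorded in the ``Boundary and Compactifications'' subsection) that in Case 1 an isotropic line yields a 0-dimensional cusp while an isotropic plane yields a 1-dimensional cusp. Concretely, the cusps of $X_{\ul{x}}^{\BB}$ are parametrized (modulo $\Gamma_{\ul{x}}$) by the rational isotropic subspaces of $V_{\ul{x}}$, and the hypothesis that $V_{\ul{x}}$ has Witt index $1$ says exactly that every such subspace has dimension at most $1$. Thus every cusp of $X_{\ul{x}}^{\BB}$ is already $0$-dimensional, corresponding to a rational isotropic line $I\subseteq V_{\ul{x}}$.

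First I would fix such a line $I$. Under the embedding $V_{\ul{x}}\hookrightarrow V$, which takes isotropic subspaces to isotropic subspaces of the same dimension, $I$ remains an isotropic line of $V$ and hence determines a $0$-dimensional cusp of $X^{\BB}$; by the compatibility of $\iota_{\ul{x}}^{\BB}$ with \eqref{perpDBB} this is precisely the image of the cusp of $X_{\ul{x}}^{\BB}$ attached to $I$. Alternatively, and perhaps more in the spirit of the preceding lemma, one argues by contradiction from Lemma \ref{cuspsvecx}: if the image of $X_{\ul{x}}^{\BB}$ met a $1$-dimensional cusp $s$ of $X^{\BB}$, the lemma would furnish $\gamma\in\Gamma$ with $\gamma J_{s}\subseteq V_{\ul{x}}$, producing a $2$-dimensional isotropic subspace of $V_{\ul{x}}$ and contradicting the Witt index $1$ assumption. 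Either route gives the first assertion.

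For the final clause I would note that $X_{\ul{x}}$ being a non-compact curve forces $\dim\mathcal{D}_{\ul{x}}=n-r(\ul{x})=1$, so that $V_{\ul{x}}$ has signature $(1,2)$. Its Witt index over $\RR$ is then $\min(1,2)=1$, while non-compactness of $X_{\ul{x}}$ guarantees a rational isotropic vector in $V_{\ul{x}}$, so its rational Witt index is at least $1$; together these pin it down to exactly $1$, and the first part applies. I do not anticipate a genuine obstacle here: the only substantive input is the dimension-preserving behavior of isotropic subspaces under $V_{\ul{x}}\hookrightarrow V$, which has already been established, so the corollary really is immediate once the Witt index condition is translated into the vanishing of $2$-dimensional rational isotropic subspaces of $V_{\ul{x}}$.
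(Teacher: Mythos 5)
Your argument is correct and matches the paper's: the paper likewise disposes of the corollary by observing that Witt index $1$ forces all rational isotropic subspaces of $V_{\ul{x}}$ to be lines (hence all cusps of $X_{\ul{x}}^{\BB}$ to land on $0$-dimensional cusps of $X^{\BB}$, via Lemma \ref{cuspsvecx} and the embedding of isotropic subspaces), and that a non-compact curve $X_{\ul{x}}$ corresponds to $V_{\ul{x}}$ isotropic of signature $(1,2)$, hence of Witt index $1$. Your extra care with the ``in particular'' clause (rational versus real Witt index) is a harmless refinement of the same reasoning.
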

Corollary \ref{Witt1} becomes clear once one observes that the isotropic subspaces of $V_{\ul{x}}$ are all 1-dimensional, and that $X_{\ul{x}}$ is a non-compact curve precisely when $V_{\ul{x}}$ is isotropic of signature $(1,2)$, hence of Witt index 1.

\medskip

\section{Hermitian Modularity \label{HermMod}}

We will first treat Case 2. For $\tau$ in the Hermitian upper half-space $\mathbb{H}_{n}$, recall that the formal generating series from \eqref{unifgenser} takes the form
\begin{equation} \label{genserHerm}
F^{\circ}(\tau)=\sum_{\ul{\mu}\in(L^{*}/L)^{n}}\sum_{T\in\operatorname{Herm}_{n}(\mathbb{K})_{\geq0}}Z^{\circ}(T,\ul{\mu})\mathbf{e}\big(\operatorname{tr}(T\tau)\big)\mathfrak{e}_{\ul{\mu}}.
\end{equation}
We aim to show that this series is a Hermitian modular form with coefficients in $\CH^{n}(X^{\tor})_{\CC}$. We will do so by viewing this series as the image of the generating series
\begin{equation} \label{Hermserop}
F(\tau)=\sum_{\ul{\mu}\in(L^{*}/L)^{n}}\sum_{T\in\operatorname{Herm}_{n}(\mathbb{K})_{\geq0}}Z(T,\ul{\mu})\mathbf{e}\big(\operatorname{tr}(T\tau)\big)\mathfrak{e}_{\ul{\mu}}\in \CH^{n}(X)_{\CC},
\end{equation}
of the special cycles from \eqref{spcycop} on the open variety $X$, under the isomorphism from Proposition \ref{isom0cyc} below.

Recall that $X$ is the unitary (open) connected Shimura variety associated with a Hermitian space of signature $(n,1)$ as in our Case 2, and $X^{\tor}$ is its canonical toroidal compactification. Recall from \cite{M} that the class $[s]$ of a point in the boundary is well-defined (see Theorem \ref{pt0cusp}), and let $\CH^{n}(X^{\tor})^{0}_{\QQ}$ denote the kernel of the degree map $\deg:\CH^{n}(X^{\tor})_{\QQ}\to\QQ$. We let $Z^{n}(X)$ denote the group of cycles of codimension $n$ on $X$, and similarly for $X^{\tor}$, and prove the following result.
\begin{prop} \label{isom0cyc}
For $s$ as above, the map of cycles
\[
\tilde{\psi}:Z^{n}(X)_{\QQ} \to Z^{n}(X^{\tor})_{\QQ},\quad\tilde{\psi}(y):=y-\deg(y) \cdot s
\]
induces an isomorphism of groups
\[\psi:\CH^{n}(X)_{\QQ}\to\CH^{n}(X^{\tor})^{0}_{\QQ}.
\]
\end{prop}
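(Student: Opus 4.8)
The plan is to deduce the statement from the localization (excision) exact sequence for Chow groups, combined with the structure of the boundary $\partial X^{\tor}:=X^{\tor}\setminus X$ in Case 2 and the result of \cite{M} recalled in Theorem \ref{pt0cusp}. Write $j:X\hookrightarrow X^{\tor}$ for the open immersion and $i:\partial X^{\tor}\hookrightarrow X^{\tor}$ for the closed complement of dimension $n-1$. Since $\dim X=\dim X^{\tor}=n$, all the groups $\CH^{n}$ here are groups of $0$-cycles, so the localization sequence reads
\[
\CH_{0}(\partial X^{\tor})_{\QQ}\xrightarrow{\ i_{*}\ }\CH^{n}(X^{\tor})_{\QQ}\xrightarrow{\ j^{*}\ }\CH^{n}(X)_{\QQ}\to 0,
\]
with $j^{*}$ the flat restriction and $i_{*}$ the proper pushforward. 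On a $0$-cycle $y$ supported in $X$ the map $\tilde{\psi}$ is literally $y\mapsto y-\deg(y)s$, and since $\deg\big(\tilde{\psi}(y)\big)=0$ it does land in $\CH^{n}(X^{\tor})^{0}_{\QQ}$; the entire content of the proposition is thus to control $\ker(j^{*})=\operatorname{im}(i_{*})$.

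The key input is that in Case 2 \emph{every} cusp of $X^{\BB}$ is $0$-dimensional and every boundary divisor $B_{I}$ (an Abelian variety of dimension $n-1$) maps under $\pi$ onto such a cusp; hence every closed point of $\partial X^{\tor}$ maps to a $0$-dimensional cusp. By Theorem \ref{pt0cusp} each such point therefore represents the single well-defined class $[s]\in\CH^{n}(X^{\tor})_{\QQ}$. As $\CH_{0}(\partial X^{\tor})_{\QQ}$ is generated by classes of closed points and $\deg[p]=1$ for each such $p$, linearity gives $i_{*}(\alpha)=\deg(\alpha)\,[s]$ for all $\alpha$, so that $\operatorname{im}(i_{*})=\QQ[s]$. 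This single fact already yields well-definedness of $\psi$: if $[y]=[y']$ in $\CH^{n}(X)_{\QQ}$ then $[y]-[y']\in\ker(j^{*})=\QQ[s]$, say $[y]-[y']=c[s]$, and taking degrees on the complete variety $X^{\tor}$ (where $\deg$ descends to Chow) gives $c=\deg(y)-\deg(y')$, whence $[\tilde{\psi}(y)]-[\tilde{\psi}(y')]=c[s]-c[s]=0$.

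To finish I would exhibit the inverse explicitly as $\chi:=j^{*}|_{\CH^{n}(X^{\tor})^{0}_{\QQ}}$. Since $s$ lies on $\partial X^{\tor}$ we have $j^{*}[s]=0$, so for a class on $X$ represented by a $0$-cycle $y$ supported in $X$,
\[
\chi\big(\psi[y]\big)=j^{*}\big[\,y-\deg(y)s\,\big]=[y]-\deg(y)\,j^{*}[s]=[y],
\]
which proves $\chi\circ\psi=\mathrm{id}$ and hence the injectivity of $\psi$. For surjectivity, let $\alpha=[A]\in\CH^{n}(X^{\tor})^{0}_{\QQ}$ and split the representing $0$-cycle as $A=A_{X}+A_{\partial}$ according to whether its points lie in $X$ or on $\partial X^{\tor}$. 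Then $\beta:=[A_{X}]\in\CH^{n}(X)_{\QQ}$ satisfies
\[
\alpha-\psi(\beta)=[A]-[A_{X}]+\deg(A_{X})[s]=[A_{\partial}]+\deg(A_{X})[s]=\deg(A)[s]=\deg(\alpha)[s]=0,
\]
using $[A_{\partial}]=\deg(A_{\partial})[s]$ and $\deg(A)=\deg(A_{X})+\deg(A_{\partial})$. Thus $\psi$ is a bijection. The only substantive point, and the place where the geometry of the unitary boundary is essential, is the identification $\operatorname{im}(i_{*})=\QQ[s]$: a priori the Abelian varieties $B_{I}$ carry enormous groups of $0$-cycles, and it is precisely Theorem \ref{pt0cusp} that collapses all of them to rational multiples of $[s]$ after pushforward to $X^{\tor}$.
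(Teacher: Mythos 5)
Your argument is correct, and it rests on the same two pillars as the paper's proof---the localization sequence for the open immersion $X\subseteq X^{\tor}$ and Ma's result (Theorem \ref{pt0cusp})---but it deploys them differently at the crucial step. The paper uses the localization sequence only for the surjectivity of the restriction map to $\CH^{n}(X)_{\QQ}$, and then checks by hand that $\tilde{\psi}$ respects rational equivalence: it reduces to a single curve $C\subseteq X$ with a rational function $f$, passes to the closure $\overline{C}$ in $X^{\tor}$, and applies Theorem \ref{pt0cusp} to the boundary part of $\operatorname{div}_{\overline{C}}(f)$. You instead invoke exactness at the middle term, $\ker(j^{*})=\operatorname{im}(i_{*})$, and combine it with the observation that $\operatorname{im}(i_{*})=\QQ[s]$ (since in Case 2 every boundary point lies over a $0$-dimensional cusp), after which well-definedness is a degree count. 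This is slightly slicker: it avoids discussing curve closures altogether and isolates the single geometric input, namely that the entire boundary Chow group collapses to $\QQ[s]$ after pushforward. The trade-off is that you lean on the full exactness of the localization sequence (standard, but more than the surjectivity the paper actually uses), whereas the paper's hands-on verification needs only the elementary geometry of completing a curve inside $X^{\tor}$. Your explicit inverse $\chi=j^{*}|_{\CH^{n}(X^{\tor})^{0}_{\QQ}}$ and the surjectivity computation via the splitting $A=A_{X}+A_{\partial}$ coincide with what the paper leaves as the ``easily verified'' compositions with $\varphi$.
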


\begin{proof}
The result will follow from the localization sequence for Chow groups. Let $Y=X^{\tor} \setminus X$ be the boundary of $X^{\tor}$ viewed as a closed subvariety of codimension 1. Let $j:Y \to X^{\tor}$ be the corresponding closed immersion, and let $i:X \to X^{\tor}$ be the associated open immersion. Then the codimension 1 property implies that for every $1 \leq k \leq n$, the localization sequence of Chow groups is the exact sequence
\[
\CH^{k-1}(Y)\stackrel{j_{*}}{\to}\CH^{k}(X^{\tor})\stackrel{i^{*}}{\to}\CH^{k}(X)\to0.
\]
For $k=n=\dim(X)$, the restriction of the pullback $i^{*}$ to the degree 0 part of $\CH^{n}(X^{\tor})^{0}_{\QQ}$ yields a map
\[
\varphi:\CH^{n}(X^{\tor})^{0}_{\QQ}\to\CH^{n}(X)_{\QQ},
\]
which is still surjective as the boundary $Y$ is non-empty. We will show that our asserted map $\psi$ is a well-defined inverse for $\varphi$, which will then automatically be an isomorphism.

For $s$ fixed, we claim that our map $\tilde{\psi}$ on 0-cycles is compatible with rational equivalence. To see this, let $D$ be a cycle on $X$ that is rationally equivalent to 0. Without loss of generality, we may assume that
\[
D=\iota_{*}\big(\operatorname{div}_{C}(f)\big)
\]
for a (quasi-projective) curve $C \subseteq X$ with embedding $\iota:C \to X$, and a rational function $f\in\QQ(C)^{\times}$. We need to show that $\tilde{\psi}(D)$ is equivalent to a divisor of a rational function on a curve on $X^{\tor}$.

But the closure $\overline{C}$ of $C$ in $X^{\tor}$ is a projective curve on $X^{\tor}$, embedded into it via the map $\overline{\iota}$, and the pushforward of the divisor of $f$ on $\overline{C}$ defines a degree 0 cycle
\[
\overline{\iota}_{*}\big(\operatorname{div}_{\overline{C}}(f)\big)=D+B,
\]
where $B$ is supported on $Y$ (since it contains the cusps of $\overline{C}$). But by Theorem \ref{pt0cusp}, $B$ is rationally equivalent to $\deg B \cdot s$, which is the same as $-\deg D \cdot s$ by the degree 0 condition. Thus the right hand side is rationally equivalent to $\tilde{\psi}(D)$, and as the left hand side is rationally equivalent to 0 by definition, our claim is proved.

It follows that $\tilde{\psi}$ induces a well-defined map $\psi$ from $\CH^{n}(X)_{\QQ}$ to $\CH^{n}(X^{\tor})_{\QQ}$, with image that is contained in $\CH^{n}(X^{\tor})^{0}_{\QQ}$. As it is easily verified that composing it with $\varphi$ from either direction produces the identity maps, $\psi$ and $\varphi$ are inverses as asserted. This completes the proof of the proposition.
\end{proof}

We note that the image of each of the toroidal special cycles $Z^{\circ}(T,\ul{\mu})$ under the pullback map $\varphi:\CH^{n}(X^{\tor})^{0}_{\QQ}\to\CH^{n}(X)_{\QQ}$ is given by the usual special cycle $Z(T,\ul{\mu})$ on the open Shimura variety $X$, namely we have
\[
\varphi\big(Z^{\circ}(T,\ul{\mu})\big)=Z(T,\ul{\mu})
\]
using the map $\varphi$ from the proof of Proposition \ref{isom0cyc}. As $\varphi$ is an isomorphism with inverse $\psi$ by that proposition, it follows immediately that the generating series $F^{\circ}(\tau)$ is the image of the generating series $F(\tau)$ from \eqref{Hermserop} under the map $\psi$, namely we have
\[
F^{\circ}(\tau)
=
\psi\big(F(\tau)\big)\in\CH^{n}(X^{\tor})_{\QQ}\ \ \ \text{for all }\tau\in\mathbb{H}_{n}.
\]

Now, Theorem 6.1 of \cite{Xi} states that the series $F$ from \eqref{Hermserop} is a Hermitian modular form when the imaginary quadratic field $\mathbb{K}$ is norm-Euclidean (namely its discriminant is $-3$, $-4$, $-7$, $-8$, or $-11$). Moreover, Theorem 3.5 of \cite{Liu} implies that if the series $F$ converges absolutely, then it is a Hermitian modular form without the restriction on the discriminant of $\mathbb{K}$. As applying the isomorphism $\psi$ from Proposition \ref{isom0cyc} preserves modularity, the same assertions hold for the series $F^{\circ}$ from \eqref{genserHerm} as well. This proves Theorems \ref{modunit} and \ref{modafterconv} from the Introduction.

\if{
The modularity follows immediately from the modularity of $F(\tau)$. Recalling the unconditional modularity of the generating series $F(\tau)$ from \cite{Xi}, as well as the conditional modularity of \cite{Liu}, we obtain the following two results.
\begin{thm} \label{thmHerm}
For a norm-Euclidean quadratic imaginary field $\mathbb{K}$, the generating series $F^{\circ}$ is a Hermitian modular form of degree $n$, weight $1+n$, and representation $\rho_{L,n}$ with respect to $\operatorname{U}_{n,n}(\ZZ)$, with values in the Chow group $\CH^{n}(X^{\tor})_{\CC}$.
\end{thm}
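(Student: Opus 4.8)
The plan is to deduce the modularity of $F^{\circ}$ as a formal consequence of the modularity of the open-variety series $F$ from \eqref{Hermserop}, transported through the isomorphism $\psi$ of Proposition \ref{isom0cyc}. The point is that $\psi$ is $\QQ$-linear and, by Theorem \ref{pt0cusp}, independent of the chosen boundary point $s$; after extending scalars it therefore yields a $\CC$-linear isomorphism $\psi_{\CC}\colon\CH^{n}(X)_{\CC}\to\CH^{n}(X^{\tor})^{0}_{\CC}\subseteq\CH^{n}(X^{\tor})_{\CC}$ that commutes with the formation of a generating series coefficient by coefficient.

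First I would record the genuine source of modularity: by Theorem 6.1 of \cite{Xi}, when $\mathbb{K}$ is norm-Euclidean (discriminant in $\{-3,-4,-7,-8,-11\}$, i.e.\ $d\in\{-1,-2,-3,-7,-11\}$) the series $F(\tau)$, with coefficients $Z(T,\ul{\mu})\in\CH^{n}(X)_{\CC}$, is a Hermitian modular form of degree $n$, weight $1+n$, and representation $\rho_{L,n}$ for $\operatorname{U}_{n,n}(\ZZ)$. Next I would verify the coefficientwise identity $\psi\big(Z(T,\ul{\mu})\big)=Z^{\circ}(T,\ul{\mu})$, which is immediate from the definitions of $\tilde{\psi}$ and of $Z^{\circ}(T,\ul{\mu})$ in \eqref{spcyctor}; summing against $q^{T}\mathfrak{e}_{\ul{\mu}}$ then gives the identity of $\CH^{n}(X^{\tor})_{\CC}$-valued series $F^{\circ}(\tau)=\psi_{\CC}\big(F(\tau)\big)$.

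Finally I would transfer the automorphy law using the definition of vector-valued modularity in the Chow group: it suffices to show that for every linear functional $l\colon\CH^{n}(X^{\tor})_{\CC}\to\CC$ the scalar series $l\circ F^{\circ}$ is a classical $\CC[(L^{*}/L)^{n}]$-valued Hermitian modular form of the asserted weight and type. But $l\circ F^{\circ}=(l\circ\psi_{\CC})\circ F$, and $l\circ\psi_{\CC}$ is again a linear functional on $\CH^{n}(X)_{\CC}$, so this follows at once from the modularity of $F$; since $\psi_{\CC}$ is linear it leaves the automorphy factor $\det(C\tau+D)^{1+n}\rho_{L,n}$ unchanged, and holomorphy together with the Fourier support on positive semidefinite $T$ are inherited. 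The values automatically lie in the degree-zero subspace $\CH^{n}(X^{\tor})^{0}_{\CC}$. The conditional variant (Theorem \ref{modafterconv}) is obtained identically, replacing \cite{Xi} by Theorem 3.5 of \cite{Liu}. I do not expect a genuine obstacle here: all the substance has been front-loaded into Proposition \ref{isom0cyc} and the cited modularity of $F$, and the only delicate point is precisely the well-definedness and $\QQ$-linearity of $\psi$ established there.
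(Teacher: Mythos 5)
Your proposal matches the paper's own argument: the paper likewise establishes the isomorphism $\psi\colon\CH^{n}(X)_{\QQ}\to\CH^{n}(X^{\tor})^{0}_{\QQ}$ of Proposition \ref{isom0cyc}, observes the coefficientwise identity (stated there via the inverse pullback $\varphi\big(Z^{\circ}(T,\ul{\mu})\big)=Z(T,\ul{\mu})$, hence $F^{\circ}(\tau)=\psi\big(F(\tau)\big)$), and then invokes Theorem 6.1 of \cite{Xi} for the norm-Euclidean case and Theorem 3.5 of \cite{Liu} for the conditional variant, concluding since $\psi$ preserves modularity. No gaps; this is essentially the same proof.
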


\begin{thm} \label{modconv}
Assume that the generating series $F^{\circ}(\tau)$ converges absolutely. Then $F^{\circ}$ is a Hermitian modular form of degree $n$, weight $1+n$ and representation $\rho_{L,n}$ with respect to $\operatorname{U}_{n,n}(\ZZ)$, valued in the Chow group $\CH^{n}(X^{\tor})_{\CC}$.
\end{thm}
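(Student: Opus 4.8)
The plan is to obtain the modularity of $F^{\circ}$ by transporting the known (conditional) modularity of the open-variety generating series $F$ through the isomorphism $\psi$ of Proposition \ref{isom0cyc}, which after tensoring with $\CC$ becomes a $\CC$-linear isomorphism $\psi:\CH^{n}(X)_{\CC}\xrightarrow{\sim}\CH^{n}(X^{\tor})^{0}_{\CC}$ that is independent of the variable $\tau$. The key bookkeeping identity is $\psi\big(Z(T,\ul{\mu})\big)=Z^{\circ}(T,\ul{\mu})$ for every $T$ and $\ul{\mu}$, equivalently $\varphi\big(Z^{\circ}(T,\ul{\mu})\big)=Z(T,\ul{\mu})$, where $\varphi$ is the inverse furnished by Proposition \ref{isom0cyc}. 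Summing against $q^{T}\mathfrak{e}_{\ul{\mu}}$ gives the formal identity $F^{\circ}(\tau)=\psi\big(F(\tau)\big)$, and dually $F(\tau)=\varphi\big(F^{\circ}(\tau)\big)$.

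First I would transfer the convergence hypothesis. Since $\varphi$ is a fixed linear map and $F=\varphi\big(F^{\circ}\big)$ coefficientwise, absolute convergence of $F^{\circ}$ on $\mathbb{H}_{n}$ implies absolute convergence of $F$ on $\mathbb{H}_{n}$. This places us in the hypothesis of Theorem 3.5 of \cite{Liu}, which guarantees, with no restriction on the discriminant of $\mathbb{K}$, that $F$ is a Hermitian modular form of degree $n$, weight $1+n$, and representation $\rho_{L,n}$ for $\operatorname{U}_{n,n}(\ZZ)$, with values in $\CH^{n}(X)_{\CC}$.

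Next I would check that applying the fixed linear map $\psi$ preserves each defining property of a Hermitian modular form. Modularity is tested against linear functionals $l:\CH^{n}(X^{\tor})_{\CC}\to\CC$; for any such $l$ the composite $l\circ\psi$ is a linear functional on $\CH^{n}(X)_{\CC}$, so the scalar series $l\big(F^{\circ}(\tau)\big)=(l\circ\psi)\big(F(\tau)\big)$ is a scalar component of the modular form $F$ and hence satisfies the transformation law of weight $1+n$ and representation $\rho_{L,n}$. Because $\psi$ does not involve $\tau$, it commutes with the action on the $\tau$-variable and with the Weil representation on the coefficient algebra $\CC[(L^{*}/L)^{n}]$; thus holomorphy in $\tau$, the support on positive semi-definite $T$, and the weight and representation are all inherited by $F^{\circ}$. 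This establishes the asserted modularity.

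I expect the only genuine content to sit in the inputs I am invoking rather than in this transfer: namely Theorem 3.5 of \cite{Liu} for the analytic modularity on $X$, and Proposition \ref{isom0cyc} for the algebraic fact that $\psi$ is a well-defined isomorphism onto the degree-zero subgroup. The latter in turn rests on Theorem \ref{pt0cusp}, which ensures that a boundary $0$-cycle lying over a $0$-dimensional cusp is rationally equivalent to a multiple of the canonical class $[s]$; this compatibility with rational equivalence is the one step I would treat with care. Granting these, the theorem is the formal observation that modularity is a property of the scalar components of the series and is therefore preserved under a fixed coefficientwise linear isomorphism.
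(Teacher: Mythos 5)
Your proposal is correct and follows essentially the same route as the paper: the paper likewise identifies $F^{\circ}(\tau)=\psi\big(F(\tau)\big)$ via Proposition \ref{isom0cyc} (using $\varphi\big(Z^{\circ}(T,\ul{\mu})\big)=Z(T,\ul{\mu})$), invokes Theorem 3.5 of \cite{Liu} for the conditional modularity of $F$ on the open variety, and transports modularity through the fixed linear isomorphism $\psi$. Your explicit transfer of the absolute-convergence hypothesis from $F^{\circ}$ to $F$ through $\varphi$ is a detail the paper leaves implicit, but it is correct and does not change the argument.
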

This establishes Theorems \ref{modunit} and \ref{modafterconv} from the Introduction.
} \fi

\section{Siegel Modularity \label{SiegelMod}}

The goal of this section is to prove that the generating series from \eqref{unifgenser} is, in Case 1, a Siegel modular form with coefficients in $\CH^{n}(X^{\tor})_{\CC}$.

\subsection{Special Chow Groups}

Here we will work with a variant of the Chow group which we shall call the \emph{special Chow group}. We will only define it here for the maximal codimension $n$. Like the usual Chow group, it is defined as the quotient of 0-cycles by appropriate rational equivalence.

As before, let $Z^{n}(X)$ (resp.\ $Z^{n}(X^{\tor})$) denote the group of cycles of codimension $n$ on the open Shimura variety $X$ (resp.\ the toroidal compactification $X^{\tor}$). If $i:X \hookrightarrow X^{\tor}$ is the natural open immersion, then it defines an injective pushforward map $i_{*}:Z^{n}(X) \hookrightarrow Z^{n}(X^{\tor})$.

On $X$, we define the group of \emph{special 0-cycles} to be
\[
Z^{n}_{\operatorname{sp}}(X):=\Span_{\ZZ}\{y \in X\;|\;y\text{ a special cycle of dimension }0\} \subseteq Z^{n}(X).
\]
After we identify $Z^{n}_{\operatorname{sp}}(X)$ with its $i_{*}$-image in $Z^{n}(X^{\tor})$, we define the group $Z^{n}_{\operatorname{sp}}(X^{\tor})$ of \emph{special 0-cycles on $X^{\tor}$} to be the direct sum of $Z^{n}_{\operatorname{sp}}(X)$ and the group
\begin{equation} \label{Bn0Xtor}
B^{n}_{0}(X^{\tor}):=\Span_{\ZZ}\{s \in X^{\tor}\;|\;\pi(s) \in X^{\BB}\text{ is a cusp of dimension 0}\}.
\end{equation}

For making some definitions and claims for both types of varieties simultaneously, we shall henceforth write here $\mathbb{X}$ for either the open Shimura variety $X$ or for its toroidal compactification $X^{\tor}$. Let $y$ and $\tilde{y}$ be elements of $Z^{n}_{\operatorname{sp}}(\mathbb{X})$. We say that they are equivalent in the relation of \emph{special rational equivalence} if
\[
y-\tilde{y}=\iota_{*}(\operatorname{div}_{W}f),
\]
where $\iota:W\hookrightarrow\mathbb{X}$ is an orthogonal Shimura subvariety of dimension 1 and $f\in\QQ(W)^{\times}$ is a rational function on $W$ whose divisor is supported on the special divisors on $W$, as well as on the boundary points (i.e., cusps) of $W$ in case $\mathbb{X}=X^{\tor}$. Note that the result of \cite{M} cited here as Theorem \ref{pt0cusp} implies that two cycles that are supported only on $B^{n}_{0}(X^{\tor})$ coincide modulo special rational equivalence if and only if they have the same degree.

We denote the equivalence relations coming from special rational equivalence by $\sim_{\operatorname{sp}}$, and the usual rational equivalence by $\sim_{\operatorname{rat}}$. We then define the quotients
\[
\SCH^{n}(\mathbb{X}):=Z^{n}_{\operatorname{sp}}(\mathbb{X})/\sim_{\operatorname{sp}}\qquad\mathrm{and}\qquad\SCH'^{n}(\mathbb{X}):=Z^{n}_{\operatorname{sp}}(\mathbb{X})/\sim_{\operatorname{rat}}.
\]
the first of which we call the $n$th \emph{special Chow group} of $\mathbb{X}$ (the second one is the image of $Z^{n}_{\operatorname{sp}}(\mathbb{X})$ in $\CH^{n}(\mathbb{X})$).

\subsection{Pullbacks and Isomorphisms of Special Chow Groups}

We observe that our rational equivalence relation $\sim_{\operatorname{sp}}$ is finer than the usual one $\sim_{\operatorname{rat}}$. Indeed, this is obvious when $\mathbb{X}=X$, and follows from Theorem \ref{pt0cusp} in case $\mathbb{X}=X^{\tor}$. We thus have a projection and an injection, which we write as
\begin{equation} \label{SCHgrps}
\SCH^{n}(\mathbb{X})\twoheadrightarrow\SCH'^{n}(\mathbb{X})\hookrightarrow\CH^{n}(\mathbb{X}).
\end{equation}
Moreover, we can compose this projection and this injection with the usual cycle class map from $\CH^{n}(\mathbb{X})_{\QQ}$ into the cohomology group $H^{2n}(\mathbb{X},\QQ)$, in order to obtain the class map
\[
\operatorname{cl}:\SCH^{n}(\mathbb{X})_{\QQ} \to H^{2n}(\mathbb{X},\QQ).
\]
The kernel of the class map is the space of classes of cohomologically trivial special cycles in $\SCH^{n}(\mathbb{X})$, namely
\[
\SCH^{n}(\mathbb{X})_{\QQ}^{0}:=\{y\in\SCH^{n}(\mathbb{X})_{\QQ}\;|\;\operatorname{cl}(y)=0 \in H^{2n}(\mathbb{X},\QQ)\}.
\]

Now, the open immersion $i:X \hookrightarrow X^{\tor}$ induces the pullback map \[i^{*}:Z^{n}_{\operatorname{sp}}(X^{\tor}) \to Z^{n}_{\operatorname{sp}}(X)\] on cycles, and as with the usual pullback on the Chow groups, we obtain another type of pullback map.
\begin{lem} \label{pbSCH}
The pullback map $i^{*}$ induces a well-defined surjective map of special Chow groups \[i^{*}:\SCH^{n}(X^{\tor})\to\SCH^{n}(X).\]
\end{lem}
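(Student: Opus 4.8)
The plan is to verify that the cycle-level pullback $i^* \colon Z^n_{\operatorname{sp}}(X^{\tor}) \to Z^n_{\operatorname{sp}}(X)$ respects special rational equivalence and is surjective; surjectivity will be formal, so the essential point is well-definedness. Concretely, under the decomposition $Z^n_{\operatorname{sp}}(X^{\tor}) = Z^n_{\operatorname{sp}}(X) \oplus B^n_0(X^{\tor})$ the map $i^*$ is just restriction of a $0$-cycle to the open part $X$, i.e.\ the projection onto the first summand that annihilates every cycle supported on the $0$-dimensional cusps. In particular $i^* \circ i_* = \operatorname{id}$ on $Z^n_{\operatorname{sp}}(X)$, so once $i^*$ is known to descend to $\SCH^n$, the identity $i^*[i_*(y)] = [y]$ will show that the induced map is surjective.

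To prove well-definedness I would take a generator of $\sim_{\operatorname{sp}}$ on $X^{\tor}$, namely a cycle of the form $\iota_{\ul{x},*}^{\tor}(\operatorname{div}_W f)$ where $W = \overline{Z(\ul{x})} = X_{\ul{x}}^{\tor}$ is a $1$-dimensional orthogonal Shimura subvariety of $X^{\tor}$ (so that $r(\ul{x}) = n-1$) and $f \in \QQ(W)^\times$ has divisor supported on the special divisors and the cusps of the projective curve $W$, and show that its image lies in a single $\sim_{\operatorname{sp}}$-class on $X$. Writing $i_{\ul{x}} \colon X_{\ul{x}} \hookrightarrow X_{\ul{x}}^{\tor}$ for the open immersion, the square with vertical maps $\iota_{\ul{x}}$, $\iota_{\ul{x}}^{\tor}$ and horizontal maps $i_{\ul{x}}$, $i$ is Cartesian, since $X_{\ul{x}} = X_{\ul{x}}^{\tor} \cap X$. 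Because $i$ is an open immersion, hence flat, the elementary base-change identity (restriction of a pushforward equals pushforward of the restriction) yields $i^* \circ \iota_{\ul{x},*}^{\tor} = \iota_{\ul{x},*} \circ i_{\ul{x}}^*$ at the level of cycles, and as $i_{\ul{x}}^*(\operatorname{div}_W f) = \operatorname{div}_{X_{\ul{x}}}(f|_{X_{\ul{x}}})$ is simply the restriction of the divisor to the open curve, I obtain
\[
i^*\big(\iota_{\ul{x},*}^{\tor}(\operatorname{div}_W f)\big) = \iota_{\ul{x},*}\big(\operatorname{div}_{X_{\ul{x}}}(f|_{X_{\ul{x}}})\big).
\]

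Finally I would check that the right-hand side is genuinely a special rational equivalence on $X$: passing from $W = X_{\ul{x}}^{\tor}$ to its open part $X_{\ul{x}}$ removes exactly the cusp contributions of $\operatorname{div}_W f$, while the remaining special-divisor part is supported on the special points of the Shimura curve, all of which lie in the interior $X_{\ul{x}}$. Thus $\operatorname{div}_{X_{\ul{x}}}(f|_{X_{\ul{x}}})$ is supported on special divisors of $X_{\ul{x}}$, which is precisely the condition defining $\sim_{\operatorname{sp}}$ on the open Shimura variety, and $i^*$ therefore descends to the asserted surjection $\SCH^n(X^{\tor}) \to \SCH^n(X)$. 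The step I expect to be the main obstacle is this last well-definedness verification, resting on the structural facts that every $1$-dimensional orthogonal Shimura subvariety of $X^{\tor}$ is the toroidal compactification of an open Shimura curve $X_{\ul{x}}$ and that the special points of such a curve lie away from its cusps; the base-change identity $i^* \circ \iota_{\ul{x},*}^{\tor} = \iota_{\ul{x},*} \circ i_{\ul{x}}^*$ is routine but should be stated carefully at the cycle level.
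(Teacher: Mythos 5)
Your proposal is correct and follows essentially the same route as the paper's proof: surjectivity is formal because $Z^{n}_{\operatorname{sp}}(X)$ sits inside $Z^{n}_{\operatorname{sp}}(X^{\tor})$ and already surjects onto $\SCH^{n}(X)$, and well-definedness is checked on a generator $\iota_{*}(\operatorname{div}_{W}f)$ of special rational equivalence by restricting to the open curve $W\cap X$, where the cusp contributions disappear and the remaining divisor of $f$ is supported on special divisors. Your explicit invocation of the Cartesian square and flat base change for the identity $i^{*}\circ\iota^{\tor}_{\ul{x},*}=\iota_{\ul{x},*}\circ i_{\ul{x}}^{*}$ just makes precise a step the paper treats as evident.
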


\begin{proof}
The fact that $Z^{n}_{\operatorname{sp}}(X)$ is a subgroup of $Z^{n}_{\operatorname{sp}}(X^{\tor})$ and also maps surjectively onto $\SCH^{n}(X)$ means that once $i^{*}$ is shown to be well-defined, it is immediately surjective.

Thus all that we need to show is that if an element $y \in Z^{n}_{\operatorname{sp}}(X^{\tor})$ becomes trivial in $\SCH^{n}(X^{\tor})$, then its $i^{*}$-image becomes trivial in $\SCH^{n}(X)$. But our assumption implies the existence of a 1-dimensional special subvariety $W$ of $X^{\tor}$, embedded via the map $\iota$, and a function $f\in\QQ(W)^{\times}$ whose divisor is supported on the special divisors and the cusps of $W$, such that
\[
y=\iota_{*}(\operatorname{div}_{W}f).
\]
But then $W \cap X$ is an open curve that is embedded as a closed subvariety of $X$, the divisor of $f$ on $W \cap X$ is supported only on the special divisors on that curve, and the cycle $i^{*}y=i^{*}\iota_{*}(\operatorname{div}f)$ coincides with $\big(\iota|_{W \cap X}\big)_{*}\big(\operatorname{div}(f|_{W \cap X})\big)$. Thus the image of $i^{*}y$ vanishes in $\SCH^{n}(X)$, as desired. This proves the lemma.
\end{proof}
Now, we are working under the assumption that the boundary $X^{\tor} \setminus X$ of $X^{\tor}$ is non-empty. We can thus restrict the map from Lemma \ref{pbSCH} to elements of degree 0 and still obtain a surjective map
\begin{align} \label{invSCH}
\varphi:\SCH^{n}(X^{\tor})^{0}\to\SCH^{n}(X).
\end{align}

\smallskip

We can now prove the following analogue of Proposition \ref{isom0cyc}, in which again $s$ is any boundary point on $X^{\tor}$ whose $\pi$-image is a cusp of dimension 0 on $X^{\BB}$.
\begin{prop} \label{cyc0isom}
The map
\[
\tilde{\psi}:Z^{n}_{\operatorname{sp}}(X) \to Z^{n}_{\operatorname{sp}}(X^{\tor}),\quad\tilde{\psi}(y):=y-\deg(y) \cdot s
\] induces an isomorphism
\[
\psi:\SCH^{n}(X)_{\QQ}\stackrel{\sim}{\to}\SCH^{n}(X^{\tor})_{\QQ}^{0}.
\]
\end{prop}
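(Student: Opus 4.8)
The plan is to follow the strategy of Proposition \ref{isom0cyc}, but to use the already-constructed surjection $\varphi$ from \eqref{invSCH} in place of the localization sequence, and to show that $\tilde\psi$ descends to a two-sided inverse of $\varphi$. Since $\deg(s)=1$, the identity $\deg(\tilde\psi(y))=\deg(y)-\deg(y)=0$ is immediate, so once $\psi$ is shown to be well defined its image automatically lands in the degree-$0$ part $\SCH^n(X^{\tor})^0_\QQ$.

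The heart of the argument, and the step I expect to require the most care, is to check that $\tilde\psi$ respects special rational equivalence. I would start from a $\sim_{\operatorname{sp}}$-trivial cycle $D=\iota_*(\operatorname{div}_W f)$ on $X$, with $\iota:W\hookrightarrow X$ a $1$-dimensional orthogonal Shimura subvariety and $\operatorname{div}_W f$ supported on special divisors, and pass to the closure $\overline W$ of $W$ in $X^{\tor}$. This $\overline W$ is the special curve $X^{\tor}_{\ul x}$ and hence a legitimate $1$-dimensional special subvariety of $X^{\tor}$, so $\operatorname{div}_{\overline W}f$ defines a special rational equivalence there, and $\overline\iota_*(\operatorname{div}_{\overline W}f)=D+B$ with $B$ supported on the cusps of $\overline W$. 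The crucial point is that by Corollary \ref{Witt1} these cusps all map to $0$-dimensional cusps of $X^{\BB}$, whence $B\in B^n_0(X^{\tor})$; since a principal divisor on the projective curve $\overline W$ has degree $0$, we have $\deg(B)=-\deg(D)$, and Theorem \ref{pt0cusp} (the result of \cite{M}) then gives $B\sim_{\operatorname{sp}}\deg(B)\cdot s=-\deg(D)\cdot s$. Combining $D+B\sim_{\operatorname{sp}}0$ with this yields $D\sim_{\operatorname{sp}}\deg(D)\cdot s$, i.e.\ $\tilde\psi(D)=D-\deg(D)\cdot s\sim_{\operatorname{sp}}0$, establishing that $\psi$ is well defined.

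Finally I would verify that $\psi$ and $\varphi$ are mutually inverse. For $\varphi\circ\psi$, using $i^*s=0$ (as $s$ lies in the boundary) and $i^*y=y$ for $y\in Z^n_{\operatorname{sp}}(X)$ gives $\varphi(\psi(y))=i^*(y-\deg(y)\cdot s)=y$. For $\psi\circ\varphi$, I would represent a class $z\in\SCH^n(X^{\tor})^0_\QQ$ as $z=y+b$ using the direct-sum decomposition $Z^n_{\operatorname{sp}}(X^{\tor})=Z^n_{\operatorname{sp}}(X)\oplus B^n_0(X^{\tor})$; then $\varphi(z)=[y]$, the degree-$0$ condition forces $\deg(b)=-\deg(y)$, and Theorem \ref{pt0cusp} gives $b\sim_{\operatorname{sp}}-\deg(y)\cdot s$, so that $\psi(\varphi(z))=[y-\deg(y)\cdot s]=[y+b]=z$. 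The main obstacle is thus the well-definedness step, and within it the decisive input is Corollary \ref{Witt1}, which guarantees that compactifying a special curve only introduces boundary points lying over $0$-dimensional cusps — precisely the situation in which Theorem \ref{pt0cusp} lets one replace the boundary correction by a multiple of the single class $[s]$.
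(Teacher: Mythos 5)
Your proposal is correct and follows essentially the same route as the paper's proof: pass to the closure $\overline{W}$ of the special curve, use Corollary \ref{Witt1} to place the boundary correction $B$ in $B^{n}_{0}(X^{\tor})$, invoke Theorem \ref{pt0cusp} to replace $B$ by $\deg(B)\cdot s$, and then check that $\psi$ and the pullback $\varphi$ from \eqref{invSCH} are mutually inverse. Your explicit verification of the two compositions (which the paper leaves as "easily verified") is a welcome addition.
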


\begin{proof}
We need to show that $\tilde{\psi}$ is compatible with the special rational equivalence, meaning that if $y \in Z^{n}_{\operatorname{sp}}(X)$ satisfies $y\sim_{\operatorname{sp}}0$, then $\tilde{\psi}(y)\sim_{\operatorname{sp}}0$ as well.

Now, our assumption on $y$ implies the existence of a special Shimura variety $W$ of dimension 1, with embedding $\iota:W \hookrightarrow X$, and a rational function $f\in\QQ(W)^{\times}$ whose divisor is supported on the special divisors on $W$, such that
\[
y=\iota_{*}(\operatorname{div}_{W}f).
\]

But then the closure $\overline{W}$ of $W$ in $X^{\tor}$ is a special Shimura subvariety of $X^{\tor}$, with embedding $\overline{\iota}:\overline{W} \hookrightarrow X^{\tor}$, and we can view $f$ as an element of $\QQ(\overline{W})^{\times}$. It follows that
\[
\overline{\iota}_{*}\operatorname{div}_{\bar{C}}(f)=D+B,
\]
where $D$ is supported on special cycles of $X$ and $B$ is supported in the boundary $X^{\tor} \setminus X$. Moreover, we have $D=y$ by restricting everything to $X$.

If $W$ is not compact, then Corollary \ref{Witt1} (or Lemma \ref{intcusp}) shows that the boundary points of $\overline{C}$ get sent, by the pushforward $\iota_{*}:Z^{1}(\overline{W}) \hookrightarrow Z^{n}(X^{\tor})$, to elements of $B^{n}_{0}(X^{\tor})$ from \eqref{Bn0Xtor}. This implies, by Theorem \ref{pt0cusp}, that $B\sim_{\operatorname{sp}}\deg B \cdot s$ for any fixed point $s \in X^{\tor}$ lying over a 0-dimensional cusp of $X^{\BB}$. Recalling that $D=y$ and that $D+B$ is of degree 0 (as the pushforward of the divisor of a rational function on a complete curve), we deduce via the definition of $\sim_{\operatorname{sp}}$ that
\[
\tilde{\psi}(y)\sim_{\operatorname{sp}}=y-\deg(y) \cdot s=D+\deg B \cdot s\sim_{\operatorname{sp}}D+B=\overline{\iota}_{*}\operatorname{div}_{\bar{C}}(f)\sim_{\operatorname{sp}}0,
\]
as desired. Hence the induced map $\psi:\SCH^{n}(X)_{\QQ}\to\SCH^{n}(X^{\tor})^{0}_{\QQ}$ is well-defined.

Finally, one easily verifies that the compositions of $\psi$ with the pullback map $\varphi$ from \eqref{invSCH} in both directions yield the respective identity maps, so that they are inverses and $\psi$ is an isomorphism as asserted. This proves the proposition.
\end{proof}

\subsection{Modularity}

We recall the generating series of special points on $X$ is defined to be
\begin{equation} \label{BRgenser}
F(\tau)=\sum_{\ul{\mu}\in(L^{*}/L)^{n}}\sum_{T\in\Sym_{n}(\QQ)_{\geq0}}Z(T,\ul{\mu})\mathbf{e}\big(\operatorname{tr}(T\tau)\big)\mathfrak{e}_{\ul{\mu}},
\end{equation}
and that \cite{BR} proves it to be a Siegel modular form of degree $n$, weight $1+\frac{n}{2}$, and representation $\rho_{L,n}$ with respect to $\widetilde{\Sp}_{2n}(\ZZ)$, with values in $\CH^{n}(X)_{\CC}$.

Moreover, we note that the rational equivalence relations between the special cycles on $X$ required in the argument of \cite{Zh} all come from Borcherds products and therefore only involve special cycles. It follows that the arguments of \cite{BR} and \cite{Zh} can be adjusted for establishing the following theorem.
\begin{thm} \label{BRSCH}
For $\tau\in\HH_{n}$, the generating series $F(\tau)$ is a Siegel modular form of degree $n$, weight $1+\frac{n}{2}$, and representation $\rho_{L,n}$ with respect to $\widetilde{\Sp}_{2n}(\ZZ)$, with values in $\SCH^{n}(X)_{\CC}$.
\end{thm}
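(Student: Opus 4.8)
The plan is to reduce the statement to the modularity in the ordinary Chow group, already established in \cite{BR}, and to upgrade it across the finer equivalence $\sim_{\operatorname{sp}}$. The coefficients $Z(T,\ul{\mu})$ of $F$ are by construction special $0$-cycles, hence lie in $Z^{n}_{\operatorname{sp}}(X)$ and define classes in $\SCH^{n}(X)_{\CC}$, and the asserted transformation law is a statement about each individual $\CC$-linear combination of Fourier coefficients. Knowing modularity in $\CH^{n}(X)_{\CC}$ tells us only that, for every $M\in\widetilde{\Sp}_{2n}(\ZZ)$, the defect $F\big((A\tau+B)(C\tau+D)^{-1}\big)-\det(C\tau+D)^{1+n/2}\rho_{L,n}(M)F(\tau)$ has coefficients that are $\sim_{\operatorname{rat}}0$; the theorem demands that these same coefficients be $\sim_{\operatorname{sp}}0$. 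Since a linear functional on $\SCH^{n}(X)_{\CC}$ need not factor through $\CH^{n}(X)_{\CC}$, this is genuinely more than \cite{BR}, and the extra content to supply is precisely that the rational equivalences realizing modularity are \emph{special} ones.

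The mechanism for this is the observation already recorded before the statement: the relations among special $0$-cycles entering the argument of \cite{Zh} (and therefore of \cite{BR}) are produced by Borcherds products. Concretely, each such vanishing, after restriction to a $1$-dimensional special Shimura subvariety $\iota:W\hookrightarrow X$, reduces to the genus-one modularity of the generating series of Heegner points on the special curve $W$, which is Borcherds' theorem \cite{Bo2}. The null-homotopy is witnessed by a rational function $f\in\QQ(W)^{\times}$ equal to (the restriction of) a Borcherds product, whose divisor on $W$ is a $\QQ$-linear combination of Heegner points, i.e.\ of special divisors on $W$. A relation of the form $\iota_{*}(\operatorname{div}_{W}f)$ with $W$ a special curve and $\operatorname{div}_{W}f$ supported on special divisors is exactly a generator of $\sim_{\operatorname{sp}}$, so each of these equivalences already holds in $\SCH^{n}(X)$.

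I would therefore proceed in three steps. First, recast the formal Fourier--Jacobi criterion of \cite{BR} with coefficients in $\SCH^{n}(X)_{\CC}$: this criterion (that a formal series whose Fourier--Jacobi coefficients are Jacobi forms and which satisfies the requisite symmetry is modular) is purely linear and insensitive to which quotient of $Z^{n}_{\operatorname{sp}}(X)$ the coefficients live in, while its inductive input is the modularity of the lower-genus generating series of special divisors on special sub-Shimura varieties, which is available in the corresponding special Chow groups by Borcherds. Second, verify that the maximal-codimension assembly keeps everything within special cycles: the passage from $Z(\ul{x})$ to the $0$-cycle $Z(\ul{x})\cdot(\LL^{\vee})^{n-r(\ul{x})}$ via sections of $\LL^{\vee}$, together with the restriction of Borcherds products to the special curves that arise, must be checked to produce only rational functions whose divisors are supported on special divisors. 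Third, conclude that the transformation-law defect is $\sim_{\operatorname{sp}}0$ coefficient by coefficient, so that $F(\tau)$, viewed in $\SCH^{n}(X)_{\CC}$, has weight $1+\tfrac{n}{2}$ and representation $\rho_{L,n}$ for $\widetilde{\Sp}_{2n}(\ZZ)$.

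The main obstacle is the bookkeeping of the second step: one must confirm that \emph{every} rational equivalence implicit in the \cite{BR}/\cite{Zh} machinery---not only the manifestly Borcherds-theoretic relations, but also those concealed in the intersection-theoretic reduction to $0$-cycles and in the inductive Fourier--Jacobi step---is genuinely realized on a $1$-dimensional \emph{special} Shimura subvariety by a function with divisor supported on special divisors, and that no non-special curve or non-special auxiliary divisor is ever required. Once this is verified, no new analytic or geometric input beyond \cite{BR} and \cite{Bo2} is needed, since the formal-modularity criterion transfers unchanged to the finer coefficient group $\SCH^{n}(X)_{\CC}$.
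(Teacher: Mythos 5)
Your proposal takes essentially the same route as the paper: the paper justifies Theorem \ref{BRSCH} precisely by the observation you center on, namely that the rational equivalence relations between special cycles required in the arguments of \cite{Zh} and \cite{BR} all come from Borcherds products and therefore only involve special cycles, so that they are special rational equivalences and the modularity argument transfers unchanged to $\SCH^{n}(X)_{\CC}$. The paper offers only this one-sentence justification, so your more detailed breakdown (the reduction to Heegner points on special curves via \cite{Bo2}, and the bookkeeping of the Fourier--Jacobi induction) is a faithful, if more explicit, rendering of the same argument.
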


We are now ready to establish the first version of the modularity of the series from \eqref{unifgenser}, which here takes the form
\begin{equation} \label{fingenser}
F^{\circ}(\tau):=\sum_{\ul{\mu}\in(L^{*}/L)^{n}}\sum_{T\in\Sym_{n}(\QQ)_{\geq0}}Z^{\circ}(T,\ul{\mu})\mathbf{e}\big(\operatorname{tr}(T\tau)\big)\mathfrak{e}_{\ul{\mu}}, \ \ \tau\in\HH_{n}.
\end{equation}

As in the previous case, we note that the class in $\SCH^{n}(X^{\tor})^{0}$ of any toroidal special 0-cycle $Z^{\circ}(T,\ul{\mu})$ maps via the pullback map $\varphi$ from \eqref{invSCH} to the usual special 0-cycles $Z(T,\ul{\mu})$, as an element of $\SCH^{n}(X)$. Then, as Proposition \eqref{cyc0isom} shows that the map $\varphi$ is an isomorphism with inverse $\psi$, it follows that for each $\tau\in\mathcal{H}_{n}$ we have the equality
\[
F^{\circ}(\tau)=\psi(F(\tau))
\]
of the series from \eqref{BRgenser} and \eqref{fingenser}, as series with values in the asserted special Chow groups. After embedding the subspace $\SCH^{n}(X^{\tor})_{\CC}^{0}$ into $\SCH^{n}(X^{\tor})_{\CC}$, we can view the series $F^{\circ}(\tau)$ as valued in $\SCH^{n}(X^{\tor})_{\CC}$. Thus the modularity of $F$ given in Theorem \ref{BRSCH} combines with the latter equality to establish the following theorem.
\begin{thm} \label{mainthmSCH}
The generating series $F^{\circ}(\tau)$ is a Siegel modular form of degree $n$, weight $1+\frac{n}{2}$, and representation $\rho_{L,n}$ with respect to $\widetilde{\Sp}_{2n}(\ZZ)$, with coefficients in the special Chow group $\SCH^{n}(X^{\tor})_{\CC}$.
\end{thm}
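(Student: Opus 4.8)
The plan is to deduce the theorem from the modularity on the open Shimura variety by transporting it along the isomorphism $\psi$ of Proposition \ref{cyc0isom}. The key point is that $\psi$ is a fixed $\QQ$-linear map between coefficient spaces, independent of the modular variable $\tau$, which acts only on the Chow-group factor of the generating series and leaves the $q^{T}\mathfrak{e}_{\ul{\mu}}$ part — the part carrying the entire modular transformation law — untouched.

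First I would record the coefficient-wise comparison. By the compatibility of the pullback $\varphi$ from \eqref{invSCH} with the special cycles one has $\varphi\big(Z^{\circ}(T,\ul{\mu})\big)=Z(T,\ul{\mu})$ in $\SCH^{n}(X)_{\QQ}$ for every $T$ and $\ul{\mu}$; since Proposition \ref{cyc0isom} shows that $\varphi$ is an isomorphism with inverse $\psi$, applying $\psi$ gives $\psi\big(Z(T,\ul{\mu})\big)=Z^{\circ}(T,\ul{\mu})$. Extending $\psi$ $\CC$-linearly and applying it term by term to the series \eqref{BRgenser}, I obtain the identity of formal generating series
\[
F^{\circ}(\tau)=\psi\big(F(\tau)\big),
\]
with values in $\SCH^{n}(X^{\tor})^{0}_{\CC}\subseteq\SCH^{n}(X^{\tor})_{\CC}$.

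Next I would appeal to the definition of modularity for series with coefficients in a vector space: it suffices to show that for every linear functional $l:\SCH^{n}(X^{\tor})_{\CC}\to\CC$, the series obtained by applying $l$ to each coefficient of $F^{\circ}$ is a Siegel modular form of the asserted weight and representation in the usual sense. For such an $l$, the identity above shows that this series equals the one obtained by applying the linear functional $l\circ\psi$ on $\SCH^{n}(X)_{\CC}$ to the coefficients of $F$. By Theorem \ref{BRSCH} the latter is a Siegel modular form of degree $n$, weight $1+\tfrac{n}{2}$, and representation $\rho_{L,n}$ with respect to $\widetilde{\Sp}_{2n}(\ZZ)$, so the same holds for $F^{\circ}$.

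Since all the substantive work has already been carried out — the modularity on $X$ with special Chow coefficients in Theorem \ref{BRSCH}, and the comparison isomorphism in Proposition \ref{cyc0isom} — the argument is essentially formal and I expect no serious obstacle. The only point that warrants a line of care is that applying the coefficient map $\psi$ genuinely preserves the weight and the $\rho_{L,n}$-transformation: this holds because $\rho_{L,n}$ acts on the group-algebra factor $\CC[(L^{*}/L)^{n}]$ and the automorphy factor $\det(C\tau+D)^{1+n/2}$ is a scalar, neither of which is affected by $\psi$, which operates solely on the Chow-class factor.
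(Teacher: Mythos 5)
Your proposal is correct and follows essentially the same route as the paper: both deduce the theorem by identifying $F^{\circ}(\tau)=\psi\big(F(\tau)\big)$ via the isomorphism of Proposition \ref{cyc0isom} and the compatibility $\varphi\big(Z^{\circ}(T,\ul{\mu})\big)=Z(T,\ul{\mu})$, and then transport the modularity of Theorem \ref{BRSCH}. Your extra remark spelling out why a fixed linear map on the coefficient space preserves the weight and the $\rho_{L,n}$-transformation (via linear functionals $l\circ\psi$) is a harmless elaboration of what the paper leaves implicit.
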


Finally, the natural maps $\SCH^{n}(X^{\tor})_{\CC} \twoheadrightarrow \SCH'^{n}(X^{\tor})_{\CC} \hookrightarrow\CH^{n}(X^{\tor})_{\CC}$ from \eqref{SCHgrps} preserve the modularity from Theorem \ref{mainthmSCH}, thus we get the modularity of the generating series in the Chow group as well, as stated in Theorem \ref{modorth} in the Introduction.

\appendix

\section{Boundary Behavior \label{BoundBeh}}

A technical point that is essential for our proof is that the boundary of our 0-dimensional toroidal cycles will only involve points that map to 0-dimensional cusps under the canonical map $\pi:X^{\tor} \to X^{\BB}$ from \eqref{Xtornot}. This statement is vacuous when $V$ is anisotropic and $X$ is compact, and obvious in Case 2 since all the cusps of $X^{\BB}$ are then 0-dimensional. We thus assume that we are in Case 1 and $V$ is isotropic (which is always the case when $n\geq3$), and thus, in particular, 0-dimensional cusps exist.

For establishing this result, we begin with the following lemma.
\begin{lem} \label{intover0dim}
For $1 \leq j \leq n$, we take $Z_{j}$ to be an irreducible divisor on $X^{\tor}$ such that either $Z_{j}=\overline{Z(y_{j})}$ is the closure of a special divisor for some $y_{j} \in V$ with $\QQ y_{j}$ positive definite, or $Z_{j}$ is a boundary divisor. We assume that, for each $i<n$, no irreducible component of $\bigcap_{j=1}^{i}Z_{j}$ is contained in $Z_{i+1}$. Then $\bigcap_{j=1}^{n}Z_{j}$ is, when non-empty, a collection of points, whose image under $\pi$ is supported:
\begin{itemize}
\item on $X$, if none of the $Z_{j}$'s is a boundary divisor.
\item on the preimage in $X^{\tor}$ of the 0-dimensional cusps of $X^{\BB}$, otherwise.	
\end{itemize}	
\end{lem}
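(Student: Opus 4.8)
The plan is to combine a dimension count with an analysis, stratum by stratum over $X^{\BB}$, of where the points of $\bigcap_{j=1}^{n}Z_{j}$ can lie, and then to use Lemma \ref{cuspsvecx} to force a positive-dimensional component whenever a point would sit over a cusp that is ``too large''. First I would record that the intersection is $0$-dimensional: writing $W_{i}:=\bigcap_{j=1}^{i}Z_{j}$, the hypothesis that no component of $W_{i}$ lies in $Z_{i+1}$ means that cutting by $Z_{i+1}$ drops the dimension of every component by exactly one, so by induction $W_{i}$ is of pure dimension $n-i$ and $W_{n}$ is a finite set of points. Each such point maps under $\pi$ into one of the strata of $X^{\BB}$: the open part $X$, a $1$-dimensional cusp, or a $0$-dimensional cusp. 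In Case B some $Z_{j}$ is a boundary divisor, so $W_{n}\subseteq X^{\tor}\setminus X$ automatically, and it remains only to exclude points lying over the open part of $1$-dimensional cusps; in Case A I must in addition exclude all boundary points.

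The key mechanism is a perpendicularity principle. Suppose $p\in W_{n}$ lies over a cusp with rational isotropic subspace $J\subseteq V$ (a line for a $0$-dimensional cusp, a plane for a $1$-dimensional one). For each special $Z_{j}=\overline{Z(y_{j})}$ through $p$, the image $\pi(p)$ lies on $\iota_{y_{j}}^{\BB}(X_{y_{j}}^{\BB})$, so Lemma \ref{cuspsvecx} provides $\gamma_{j}\in\Gamma$ with $\gamma_{j}J\subseteq V_{y_{j}}$, i.e.\ $\gamma_{j}^{-1}y_{j}\perp J$. Since $\overline{Z(y_{j})}=\overline{Z(\gamma_{j}^{-1}y_{j})}$, after replacing each $y_{j}$ by $\gamma_{j}^{-1}y_{j}$ I may assume $y_{j}\perp J$ for every special index $j$; moreover any boundary divisor through $p$ must be the one attached to $J$ itself, since the others map under $\pi$ to different cusps.

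To exclude $1$-dimensional cusps I would exploit the Kuga--Sato structure over such a cusp $c$ (with plane $J$): the fibre of $\pi$ over a point of $c$ is an abelian variety, and a special divisor with $y_{j}\perp J$ restricts on the boundary to the sub-abelian variety cut out by the image of $y_{j}$ in the positive definite space $J^{\perp}/J$, which passes through the origin of every fibre, while the boundary divisor attached to $J$ contains the whole fibre. Hence the zero-section over $c$ lies in every $Z_{j}$ passing through $p$, so $W_{n}$ contains a curve, contradicting its $0$-dimensionality. This rules out $1$-dimensional cusps in both cases and therefore completes Case B.

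For Case A it remains to exclude $0$-dimensional cusps, and this I expect to be the main obstacle. After the reduction above, a point $p\in W_{n}$ over a $0$-dimensional cusp with isotropic line $I$ yields $y_{1},\dots,y_{n}\in I^{\perp}$. In the toroidal boundary over this cusp each $\overline{Z(y_{j})}$ restricts to the divisor perpendicular to the image $\bar{y}_{j}$ in $I^{\perp}/I$, which has signature $(n-1,1)$; if the $\bar{y}_{j}$ fail to span $I^{\perp}/I$ then their common perpendicular is nonzero and produces a positive-dimensional boundary component of $W_{n}$, again contradicting $0$-dimensionality. The remaining possibility is that the $\bar{y}_{j}$ do span $I^{\perp}/I$, which forces $\Span(y_{j})$ to be nondegenerate of Lorentzian signature $(n-1,1)$; here the dimension count is exactly balanced, so no positive-dimensional component is produced and the easy argument breaks down. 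I would resolve this delicate case by a finer analysis of the toric boundary over the $0$-dimensional cusp, showing that the closures of the codimension-one subtori cut out by the $\bar{y}_{j}$ cannot meet in an isolated point of $\pi^{-1}(b)$ without violating the proper-intersection hypothesis for the given ordering $Z_{1},\dots,Z_{n}$; this is the step that requires the most care.
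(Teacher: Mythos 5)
Your dimension count and your use of Lemma \ref{cuspsvecx} to reduce to the case $y_{j}\perp J$ are fine, but the proof is not complete: you explicitly leave open the decisive sub-case in which all the $Z_{j}$ are special, the point lies over a $0$-dimensional cusp with isotropic line $I$, and the images $\bar{y}_{j}$ span $I^{\perp}/I$. Announcing that this ``requires the most care'' and promising ``a finer analysis of the toric boundary'' is not an argument. Over a $0$-dimensional cusp the fibre of $\pi$ is a toric variety depending on the cone decomposition $\Sigma$, the trace of $\overline{Z(y_{j})}$ on it is a union of toric strata attached to cones lying in $\bar{y}_{j}^{\perp}$, and nothing in your sketch rules out $n$ such traces meeting in an isolated boundary point while the $Z_{j}$ still intersect properly in $X^{\tor}$; this is exactly where the first bullet of the lemma has content, so the gap is genuine. (Your exclusion of $1$-dimensional cusps via the zero-section of the abelian fibration is plausible but also heuristic: you do not justify that each $\overline{Z(y_{j})}$ through the point contains the zero-section rather than a translate of the sub-abelian scheme cut out by $\bar{y}_{j}^{\perp}$.)

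The paper proceeds by a different and much shorter reduction. After disposing of the easy configurations (some $Z_{j}$ a boundary divisor over a $0$-dimensional cusp, or two boundary divisors over $1$-dimensional cusps meeting --- both already lie over $0$-dimensional cusps), one may assume $Z_{1},\dots,Z_{n-1}$ are special; their intersection is a union of special curves $\overline{Z(\ul{x})}=\iota_{\ul{x},*}^{\tor}(X_{\ul{x}}^{\tor})$ with $V_{\ul{x}}$ of signature $(1,2)$, hence of Witt index at most $1$, so Corollary \ref{Witt1} shows each such curve meets the boundary of $X^{\tor}$ only in points lying over $0$-dimensional cusps. Intersecting with the remaining divisor $Z_{n}$, special or boundary, then lands either in $X$ or over $0$-dimensional cusps, with no fibre-by-fibre analysis at all. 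Note that this argument, like yours, does not actually exclude intersection points over $0$-dimensional cusps in the all-special case --- it yields support on $X$ together with the preimages of the $0$-dimensional cusps --- and that weaker conclusion is what is ultimately exploited through Theorem \ref{pt0cusp}, since all such boundary points represent the single class $[s]$. Adopting the curve reduction gives you the usable statement essentially for free, instead of leaving you stuck on a refinement whose proof you have not supplied.
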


\begin{proof}
The condition about the intersection not being contained in the next divisor implies that $\bigcap_{j=1}^{n}Z_{j}$ is either empty or 0-dimensional, hence a collection of points. Moreover, the result is clear if one of the divisors $Z_{j}$ is a boundary divisor $B_{I,\omega}$, or if two divisors lying over 1-dimensional cusps intersect, as their intersection in $X^{\tor}$ lies over 0-dimensional cusps of $X^{\BB}$.

We can thus assume, without loss of generality, that for any $1 \leq j \leq n-1$ we have
\[
Z_{j}=\overline{Z(y_{j})}.
\]
Thus their intersection $\bigcap_{j=1}^{n-1}\overline{Z(y_{j})}$ is a finite collection of curves of the form $\overline{Z(\ul{x})}$ for $\ul{x} \in V^{n-1}$ with $\QQ\ul{x}$ positive definite space of dimension $n-1$. Every such $\overline{Z(\ul{x})}$ is the closure in $X^{\tor}$ of the corresponding special cycle $Z(\ul{x})$ on $X$, and, using the pushforward of \eqref{iotator}, it equals
\[
\overline{Z(\ul{x})}=\iota_{\ul{x}, *}^{\tor}(X^{\tor}_{\ul{x}}).
\]
Then Corollary \ref{Witt1} implies that $\overline{Z(\ul{x})}$ intersects the boundary of $X^{\tor}$ only in points lying over 0-dimensional cusps. Thus, if $Z_{n}=\overline{Z(y_{n})}$ is a special divisor, then $Z(\ul{x})$ intersects with $Z_{n}$ either in points of $X$ or in points lying over 0-dimensional cusps. Otherwise, if $Z_{n}$ is a boundary divisor, Corollary \ref{Witt1} again yields that the intersection of $Z(\ul{x})$ with the boundary of $X^{\tor}$ is supported, when not empty, on the preimages of the 0-dimensional cusps of $X^{\BB}$. This proves the lemma.
\end{proof}

\subsection{Intersection with the Dual Tautological Bundle}

Let again $\LL_{\tor}$ denote the tautological line bundle on $X^{\tor}$ (in both Cases 1 and 2), with dual line bundle $\LL_{\tor}^{\vee}$ and corresponding Chow class $[\LL_{\tor}^{\vee}]$. We will need the intersections, in the Chow ring of $X^{\tor}$, of any special cycle class $[\overline{Z(\ul{x}})]$, and of the class of the pushforward of any of the boundary divisors of $X_{\ul{x}}^{\tor}$, with the power of $[\LL_{\tor}^{\vee}]$ such that the product is in $\CH^{n}(X^{\tor})$.

\begin{prop} \label{dimbd}
Given an element $\ul{x} \in V^{r}$, the class $[\overline{Z(\ul{x})}]\cdot[\LL_{\tor}^{\vee}]^{n-r(\ul{x})}$ can be represented in $\CH^{n}(X^{\tor})$ by a 0-cycle that is supported on $X$.
\end{prop}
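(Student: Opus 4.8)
\emph{Plan.} The strategy is to realize the class $[\overline{Z(\ul{x})}]\cdot[\LL_{\tor}^{\vee}]^{n-r(\ul{x})}$ as an honest intersection of $\overline{Z(\ul{x})}$ with the divisors of $d:=n-r(\ul{x})$ suitably chosen sections of a power of $\LL_{\tor}^{\vee}$, arranged so that this intersection avoids the boundary $X^{\tor}\setminus X$ altogether. The class itself is identified via the projection formula together with the compatibility $(\iota_{\ul{x}}^{\tor})^{*}\LL_{\tor}=\LL_{\tor,\ul{x}}$ of tautological bundles under the map \eqref{iotator}: since $\overline{Z(\ul{x})}=\iota_{\ul{x}}^{\tor}(X_{\ul{x}}^{\tor})$ and $X_{\ul{x}}^{\tor}$ is itself a toroidal compactification of the Shimura variety $X_{\ul{x}}$ of the same type, the problem is equivalently the top self-intersection $[\LL_{\tor,\ul{x}}^{\vee}]^{\dim X_{\ul{x}}^{\tor}}$ on $X_{\ul{x}}^{\tor}$, i.e.\ the case $r=0$ for the sub-Shimura variety $X_{\ul{x}}$, whose representatives push forward under $\iota_{\ul{x}}^{\tor}$ to cycles supported on $X$.

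The core input is that $\LL_{\tor}^{\vee}$ is semi-ample: a sufficiently high power descends along the contraction $\pi:X^{\tor}\to X^{\BB}$ to an ample, globally generated line bundle $A$ on the Baily--Borel compactification, so that $\LL_{\tor}^{\vee\otimes k}\cong\pi^{*}A$ and every section of $\LL_{\tor}^{\vee\otimes k}$ is a pullback $\pi^{*}t$ of a section $t$ of $A$. The divisor of such a section is $\pi^{-1}(\operatorname{div}t)$, hence it avoids a boundary divisor $B$ precisely when $t$ does not vanish at the cusp $\pi(B)$. Next I would invoke Corollary \ref{Witt1} (through Lemma \ref{cuspsvecx}) to see that $\overline{Z(\ul{x})}$ meets the boundary of $X^{\tor}$ only in finitely many points lying over $0$-dimensional cusps of $X^{\BB}$; let $c_{1},\dots,c_{t}$ denote these cusps.

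I would then choose sections $t_{1},\dots,t_{d}$ of a power of $A$ that are non-vanishing at each $c_{i}$ and otherwise generic. Their pullbacks $\pi^{*}t_{1},\dots,\pi^{*}t_{d}$ are sections of $\LL_{\tor}^{\vee\otimes k}$ whose divisors are disjoint from the boundary locus met by $\overline{Z(\ul{x})}$; consequently the entire intersection $\overline{Z(\ul{x})}\cap\bigcap_{j}\operatorname{div}(\pi^{*}t_{j})$ is contained in the open part $X_{\ul{x}}\subseteq X$. A Bertini/general-position argument—whose dimension bookkeeping is exactly the successive-properness hypothesis appearing in Lemma \ref{intover0dim}—guarantees that each successive intersection drops dimension by one, so the final intersection is a finite collection of points. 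By construction this $0$-cycle represents $[\overline{Z(\ul{x})}]\cdot[\LL_{\tor}^{\vee\otimes k}]^{d}=k^{d}\,[\overline{Z(\ul{x})}]\cdot[\LL_{\tor}^{\vee}]^{d}$, and since the applications take place in Chow groups with $\QQ$-coefficients, rescaling by $k^{-d}$ yields a representative of $[\overline{Z(\ul{x})}]\cdot[\LL_{\tor}^{\vee}]^{n-r(\ul{x})}$ supported on $X$.

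The main obstacle I expect lies in the semi-ampleness and descent step: pinning down that a power of $\LL_{\tor}^{\vee}$ is globally generated and equals $\pi^{*}A$ for an ample $A$ on $X^{\BB}$, so that sections can be prescribed to be non-zero at the finitely many relevant cusps, while simultaneously retaining the genericity needed for a proper, $0$-dimensional intersection. The boundary-avoidance is comparatively soft once Corollary \ref{Witt1} confines the boundary of $\overline{Z(\ul{x})}$ to lie over $0$-dimensional cusps; the delicate point is to meet "non-vanishing at the cusps" and "general enough for properness" at once, which is possible because both are open dense conditions on the choice of the $t_{j}$.
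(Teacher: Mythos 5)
Your overall architecture (realize the class as an honest intersection with divisors of sections of a power of $\LL_{\tor}^{\vee}$, arranged to miss the boundary) is reasonable, but the boundary-control step rests on a false premise. You assert, citing Corollary \ref{Witt1}, that $\overline{Z(\ul{x})}$ meets $\partial X^{\tor}$ only in finitely many points lying over $0$-dimensional cusps. Corollary \ref{Witt1} applies only when $V_{\ul{x}}$ has Witt index $1$, i.e.\ essentially when $X_{\ul{x}}$ is a curve. For general $\ul{x}$ (and a fortiori after your reduction to the case $r=0$, where $\overline{Z(\ul{x})}=X^{\tor}$), the intersection $\overline{Z(\ul{x})}\cap\partial X^{\tor}$ has dimension $n-r(\ul{x})-1$, which is positive whenever $r(\ul{x})<n-1$; moreover, by Lemma \ref{cuspsvecx}, when $V_{\ul{x}}$ contains a rational isotropic plane this intersection also lies over $1$-dimensional cusps. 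Consequently the prescription ``choose $t_{j}$ non-vanishing at the finitely many relevant cusps'' does not suffice: over a $1$-dimensional cusp every section of the ample $A$ must vanish somewhere, and over a $0$-dimensional cusp $p$ the locus $\overline{Z(\ul{x})}\cap\pi^{-1}(p)$ is positive-dimensional while the pullbacks $\pi^{*}t_{j}$ are constant on the fiber $\pi^{-1}(p)$, hence their divisors either contain that locus entirely or miss it --- they cannot cut it down. The argument is salvageable (since $\dim\partial X^{\BB}\leq1$, two generic sections $t_{1},t_{2}$ of $A$ already satisfy $\operatorname{div}t_{1}\cap\operatorname{div}t_{2}\cap\partial X^{\BB}=\emptyset$, so $\operatorname{div}(\pi^{*}t_{1})\cap\operatorname{div}(\pi^{*}t_{2})\subseteq X$ and the remaining intersections can be made proper inside $X$), but that repair is not in your write-up, and your ``comparatively soft'' boundary-avoidance is exactly where the content lies. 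You would also need to justify the semi-ampleness/descent $\LL_{\tor}^{\vee\otimes k}\cong\pi^{*}A$ and the compatibility $(\iota_{\ul{x}}^{\tor})^{*}\LL_{\tor}\cong\LL_{\tor,\ul{x}}$ of the toroidal extensions, neither of which is established in the paper.

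For contrast, the paper proceeds quite differently: it writes $\overline{Z(\ul{x})}$ as a component of an intersection of special divisors $\overline{Z(y_{j})}$ and then intersects with divisors of Borcherds products $\Psi_{j}$, which are explicit sections of powers of $\LL_{\tor}^{\vee}$ whose divisors (by Theorem 5.2 of \cite{BZ}) are combinations of special divisors and boundary divisors; imposing finitely many linear conditions on the input forms $f_{j}$ removes the boundary divisors and avoids the finitely many special cycles already produced, after which Lemma \ref{intover0dim} forces the final intersection to be a finite set of points of $X$. This choice is not merely technical: it makes every intermediate cycle a \emph{special} cycle, so the resulting $0$-cycle is a special $0$-cycle and the rational equivalences stay within the special Chow group $\SCH^{n}(X^{\tor})$ --- a property the paper needs in Section \ref{SiegelMod} and records in the remark following the proof. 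Your generic hyperplane sections would, even after the repair above, produce non-special points of $X$, proving the literal statement of the proposition but not supporting its use in the Siegel modularity argument.
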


\begin{proof}
Take a basis $\{y_{j}\}_{1 \leq j \leq r(\ul{x})}$ for the vector space $\ul{x}\QQ$ and let $Z_{j}:=\overline{Z(y_{j})}$ be the corresponding special divisors. We can choose an orthonormal basis such that for each $i$, the divisor $Z_{i+1}$ does not contain any irreducible component of $\bigcap_{j=1}^{i}Z_{j}$. Then it is clear that if we prove the assertion for the intersection of the class of $\bigcap_{j=1}^{r(\ul{x})}Z_{j}$ with $n-r(\ul{x})$ copies of $\LL_{\tor}^{\vee}$, then the statement for $\overline{Z(\ul{x})}$ will follow.

Assume that we are in Case 1. The sections of the line bundle $\LL_{\tor}^{\vee}$ are modular forms of weight $-\frac{1}{2}$ on $X^{\tor}$. We can define such sections using Borcherds products as follows (see \cite{Bo1} for the general construction of these functions). The $j^{\text{th}}$ section of $\LL_{\tor}^{\vee}$ is constructed from a weakly holomorphic modular form
\[
f_{j}(z):=\sum_{\mu \in L^{*}/L}\sum_{m\gg0}c_{j}(m,\mu)\mathbf{e}(mz)e_{\mu},
\]
with $c_{j}(m,\mu)\in\ZZ$ for every $m$ and $\mu$. The divisor of the associated Borcherds product $\Psi_{j}$ on $X^{\tor}$ is determined in Theorem 5.2 of \cite{BZ} to be a combination of special divisors and boundary divisors, namely
\[
\operatorname{div}_{X^{\tor}}(\Psi_{j})
=
\frac{1}{2}\sum_{\mu \in L^{*}/L}\sum_{\substack{m\in\ZZ+Q(\mu) \\ m>0}}c_{j}(-m,\mu)Z^{\tor}(m,\mu),
\]
where $Z^{\tor}(m,\mu)$ is the toroidal special divisor defined by \cite{BZ}. Moreover, by allowing the pole of $f_{j}$ to be of high enough order, we can obtain a linearly independent set of modular forms with integral Fourier coefficients of arbitrarily large cardinality. We use this family to construct Borcherds products of any finite cardinality that we wish.

In Case 2, the sections of the line bundle $\LL_{\tor}^{\vee}$ are modular forms of weight $-1$. In \cite{Ho2}, Hofmann explains how to construct such modular forms as Borcherds products in this setting.

We are interested in the intersection
\[
\bigcap_{j=1}^{r(\ul{x})}Z_{j}\cdot \operatorname{div}\Psi_{1}\cdot\ldots\cdot\operatorname{div}\Psi_{r(\ul{x})}.
\]
In both cases it is known that, for Borcherds products, the total multiplicity of each boundary divisor $B$ is rational (see \cite{BZ} for Case 1 and \cite{BHKRY} for Case 2). As $X$ carries finitely many such divisors, the condition that the boundary divisors will not appear in $\operatorname{div}_{X^{\tor}}(\Psi)$ gives a bounded number of linear restrictions over $\mathbb{Z}$ on the weakly holomorphic modular forms $f$. Moreover, given any special cycle $Z(\ul{y})$ on $X$, we can assume, by having an appropriate combination of enough such modular forms, that the divisor of $\Psi_{j}$ does not contain $\overline{Z(\ul{y})}$. This is also valid for finitely many distinct such special divisors.

Thus we can choose each successive divisor $\operatorname{div}\Psi_{j}$ with restrictions as above, for $1\leq j\leq r(\ul{x})$. As $\bigcap_{j=1}^{r(\ul{x})}Z_{j}$ is the union of finitely many special cycles $Z(\ul{y}^{(i)})$, we can take $\operatorname{div}\Psi_{1}$ such that its support
\begin{itemize}
\item does not contain any boundary divisors, and
\item does not contain any special cycle $\overline{Z(\ul{y}^{(i)})}$, $1\leq i \leq r(\ul{x})$.
\end{itemize}
Since $\operatorname{div}\Psi_{1}$ is then a linear combination of special divisors, the intersection with each $\overline{Z(\ul{y}^{(i)})}$ will be a finite combination of special divisors of codimension 1 more. But then for $\Psi_{2}$ we are in a similar situation, and we do so until we obtain, by Lemma \ref{intover0dim}, an intersection which is represented by a finite combination of special points (i.e., special cycles of dimension 0), which lie on $X$. This proves the proposition.
\end{proof}
It is clear from the proof of Proposition \ref{dimbd} that every such representation is a special 0-cycle in the terminology from Section \ref{SiegelMod}, and that they all represent the same class in the special Chow group $\SCH^{n}(X^{\tor})$ as well.

\medskip

For the boundary intersections, we shall be using the following result of \cite{M}.
\begin{thm} \label{pt0cusp}
If $s$ and $t$ are points in $X^{\tor}$ that map to 0-dimensional cusps on $X^{\BB}$, then the points $s$ and $t$ represent the same class in $\CH^{n}(X^{\tor})_{\QQ}$.
\end{thm}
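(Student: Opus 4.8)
The plan is to fix once and for all a reference point $s_{0}\in X^{\tor}$ lying over a $0$-dimensional cusp and to show that an arbitrary point $s$ of the same type satisfies $[s]=[s_{0}]$ in $\CH^{n}(X^{\tor})_{\QQ}=\CH_{0}(X^{\tor})_{\QQ}$. I would separate this into two assertions: (i) any two points lying in a single fiber $\pi^{-1}(c)$ over a fixed $0$-dimensional cusp $c$ are rationally equivalent; and (ii) the common class attached to $\pi^{-1}(c)$ is independent of $c$. The guiding principle is that the pushforward $\pi_{*}$ sends every point of $\pi^{-1}(c)$ to the single class of $c$ in $\CH_{0}(X^{\BB})_{\QQ}$, so the content of the theorem is that the fibers of $\pi$ over $0$-dimensional cusps contribute nothing new to $\CH_{0}$ after passing to $\QQ$-coefficients. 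As in the proof of Proposition~\ref{isom0cyc}, the localization sequence $\CH_{0}(Y)\to\CH_{0}(X^{\tor})\to\CH_{0}(X)\to0$ for the boundary $Y=X^{\tor}\setminus X$ will be the basic exact tool.

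For assertion~(i) in Case~1, I would use that, for a smooth cone decomposition $\Sigma$, the fiber $\pi^{-1}(c)$ over a $0$-dimensional cusp is a smooth complete toric variety. Such a variety admits a cellular decomposition into affine cells coming from its torus orbits, so $\CH_{0}(\pi^{-1}(c))\cong\ZZ$ is generated by the class of a single point; hence all points of one fiber are rationally equivalent already inside the fiber, and \emph{a fortiori} in $X^{\tor}$. This settles (i) integrally in the orthogonal case. For assertion~(ii) I would exploit that the boundary divisors $B_{I,\omega}$ and the divisors $B_{J}$ over $1$-dimensional cusps form a \emph{connected} configuration whose pairwise intersections lie over $0$-dimensional cusps (compare Lemma~\ref{intover0dim} and Corollary~\ref{Witt1}); chaining the rational curves available inside these rational boundary strata then links the fibers over different $0$-dimensional cusps and forces a single class.

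The main obstacle is assertion~(i) in Case~2, where $\pi^{-1}(c)$ is an abelian variety $A$ of dimension $n-1$, which is far from rational, so that $\CH_{0}(A)^{0}$ is large and the fiberwise argument above breaks down completely. Here I would instead model a neighborhood of $A$ on the total space of the normal line bundle $N=N_{A/X^{\tor}}$, with $A$ as its zero section, and use the deformation to the normal cone to make this rigorous. Using the fiber coordinate of $N$ as a rational function on the closure of a normal-bundle fiber through a point $s\in A$, one sees that $[s]$ is rationally equivalent in $X^{\tor}$ to a $0$-cycle supported in the interior $X$; together with the homotopy-invariance computation $\CH_{0}(\operatorname{Tot}N)\cong\CH_{-1}(A)=0$ this should reduce the dependence of $[s]$ on $s$ to a correction term living in $\CH_{0}(X)_{\QQ}$. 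The delicate part, which is exactly where I expect the real work to concentrate, is to prove that this interior correction vanishes after tensoring with $\QQ$: I would attack it through the negativity of $N$ (which is forced by $A$ being contracted by $\pi$ to the point $c$, so that $X^{\BB}$ is normal there) and a self-intersection argument absorbing the degree-$0$ part $\CH_{0}(A)^{0}$ into torsion.

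Finally, for assertion~(ii) in Case~2 the boundary divisors $A_{i}$ over distinct cusps are pairwise disjoint, so any connecting cycle must travel through the interior $X$; here I would connect the cusps by means of special (Shimura) curves on $X^{\tor}$ whose closures meet several of the boundary abelian varieties, reducing the comparison of the classes over different cusps to divisors of rational functions on these curves. In summary, the orthogonal case is essentially formal once one records the rationality of the toric fibers and the connectedness of the boundary, whereas the unitary case rests entirely on controlling the abelian-variety fibers; this last step is the genuine difficulty, and it is where the structural input of \cite{M} enters.
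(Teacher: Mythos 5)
First, a structural point: the paper does not prove this statement at all. Theorem \ref{pt0cusp} is imported verbatim from Ma's work \cite{M} and is used as a black box (it underlies the definition of $[s]$ and the maps $\psi$ in Propositions \ref{isom0cyc} and \ref{cyc0isom}). So there is no internal proof to compare against, and your proposal has to be judged on its own merits. Your reduction into (i) within-fiber equivalence and (ii) comparison across cusps is reasonable, and your treatment of (i) in Case 1 is essentially sound (with the caveat that $\pi^{-1}(c)$ is not a smooth complete toric variety but a quotient by an arithmetic group of the \emph{boundary} of a torus embedding, i.e.\ a connected union of closed toric strata; rational connectedness of each stratum plus connectedness of the configuration still gives $\CH^{n}$ of the fiber equal to $\QQ$ after tensoring). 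But there are two genuine gaps.

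For (ii) in Case 1, the claimed connectedness of the boundary configuration is false in general. When $V$ has Witt index $1$ there are no $1$-dimensional cusps, so the fibers over distinct $0$-dimensional cusps are pairwise disjoint closed subsets of $X^{\tor}\setminus X$, and there is nothing in the boundary to chain through; Lemma \ref{intover0dim} only locates intersections of boundary divisors when they occur, it does not produce any. One must instead connect distinct cusps through the interior, e.g.\ via closures of embedded modular curves passing through several cusps, and then the statement one needs is that the cusps of such a curve are rationally equivalent on it --- a Manin--Drinfeld type assertion, true only up to torsion. This is precisely where the $\QQ$-coefficients in the theorem are consumed; the fact that your Case 1 argument never uses them is a warning sign.

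For (i) in Case 2 the proposed mechanism cannot be completed. Negativity of the normal bundle $N$ of the boundary abelian variety $A$ (equivalently, contractibility of $A$ to the normal point $c\in X^{\BB}$) does not force the points of $A$ to become equivalent in the resolution: for the minimal resolution of the projective cone over an elliptic normal curve $E$, the exceptional divisor is a copy of $E$ with negative self-intersection, yet $\CH^{2}$ of the resolution contains a copy of $\operatorname{Pic}^{0}(E)$ and the points of $E$ are pairwise inequivalent there, even rationally. The homotopy-invariance computation for the total space of $N$ is likewise inapplicable, since only an analytic tubular neighborhood, not a Zariski-open subset of $X^{\tor}$, is modeled on $N$, and the specialization map in the deformation to the normal cone points in the wrong direction. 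The statement in Case 2 is therefore not a geometric consequence of negativity; it requires the arithmetic structure of the boundary (the fiber is isogenous to a product of CM elliptic curves, and is swept out by boundaries of smaller special cycles whose cusps are controlled by Manin--Drinfeld), which is exactly the input supplied by \cite{M} and absent from your argument. The same issue recurs in your (ii) for Case 2: ``divisors of rational functions on special curves'' only compare cusps up to a nontrivial torsion statement that must be proved, not assumed.
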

We can thus choose any point $s \in X^{\tor}$ such that $\pi(s) \in X^{\BB}$ is a 0-dimensional cusp and denote by $[s]$ the image of $s$ in $\CH^{n}(X^{\tor})_{\QQ}$, and then Theorem \ref{pt0cusp} implies that this class is canonical, i.e., independent of the choice of $s$. In particular, the group $B^{n}_{0}(X^{\tor})$ from \eqref{Bn0Xtor} has rank 1.

We can now obtain the following result.
\begin{lem} \label{intcusp}
Let $B_{\ul{x}}$ be a boundary divisor on $X_{\ul{x}}^{\tor}$, and let $\iota_{\ul{x},*}^{\tor}$ be the pushforward map from \eqref{pushforward}. Then the class of the $n$-cycle $\iota_{\ul{x},*}^{\tor}[B_{\ul{x}}]\cdot[\LL_{\tor}^{\vee}]^{n-r(\ul{x})-1}$ in $\CH^{n}(X^{\tor})_{\QQ}$ can be represented by a multiple of $[s]$.
\end{lem}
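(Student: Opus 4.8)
The plan is to transfer the computation to the sub-Shimura variety $X_{\ul{x}}^{\tor}$ and then invoke Lemma \ref{intover0dim}. Set $d:=n-r(\ul{x})=\dim X_{\ul{x}}^{\tor}$. First I would apply the projection formula to the proper map $\iota_{\ul{x}}^{\tor}$ from \eqref{iotator}, writing
\[
\iota_{\ul{x},*}^{\tor}[B_{\ul{x}}]\cdot[\LL_{\tor}^{\vee}]^{d-1}=\iota_{\ul{x},*}^{\tor}\big([B_{\ul{x}}]\cdot(\iota_{\ul{x}}^{\tor})^{*}[\LL_{\tor}^{\vee}]^{d-1}\big).
\]
The structural input here is that the tautological line bundle is compatible with the embedding, that is $(\iota_{\ul{x}}^{\tor})^{*}\LL_{\tor}^{\vee}\cong\LL_{\tor,\ul{x}}^{\vee}$, the dual tautological bundle of $X_{\ul{x}}^{\tor}$, since the isotropic line defining a point of $\mathcal{D}_{\ul{x}}$ is the very same line inside $V_{\ul{x},\CC}\subseteq V_{\CC}$. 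It therefore suffices to understand the $0$-cycle $[B_{\ul{x}}]\cdot[\LL_{\tor,\ul{x}}^{\vee}]^{d-1}$ on $X_{\ul{x}}^{\tor}$, which is automatically supported on the boundary divisor $B_{\ul{x}}$, and to show that its image under $\pi\circ\iota_{\ul{x}}^{\tor}$ lands on $0$-dimensional cusps of $X^{\BB}$.

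In Case 2, and in Case 1 when $B_{\ul{x}}$ lies over a $0$-dimensional cusp, this is immediate: the entire divisor $B_{\ul{x}}$ maps under $\pi_{\ul{x}}$ to a single $0$-dimensional cusp of $X_{\ul{x}}^{\BB}$, which by the commutativity $\pi\circ\iota_{\ul{x}}^{\tor}=\iota_{\ul{x}}^{\BB}\circ\pi_{\ul{x}}$ and Lemma \ref{cuspsvecx} maps to a $0$-dimensional cusp of $X^{\BB}$. The only case requiring work is Case 1 with $B_{\ul{x}}=B_{\ul{x},J}$ over a $1$-dimensional cusp. Here I would reproduce, on the smaller orthogonal toroidal variety $X_{\ul{x}}^{\tor}$ (whose space $V_{\ul{x}}$ has signature $(n-r(\ul{x}),2)$), the Borcherds-product construction from the proof of Proposition \ref{dimbd}: choose weakly holomorphic forms whose products $\Psi_{1},\dots,\Psi_{d-1}$ have divisors that are combinations of special divisors containing no boundary component, arranged so that the successive intersections with $B_{\ul{x}}$ satisfy the non-containment hypotheses of Lemma \ref{intover0dim}.

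Representing $[\LL_{\tor,\ul{x}}^{\vee}]^{d-1}$ by $\operatorname{div}\Psi_{1}\cdots\operatorname{div}\Psi_{d-1}$ and expanding multilinearly into irreducible special divisors, each resulting term is an intersection of $B_{\ul{x}}$ with $d-1$ special divisors on $X_{\ul{x}}^{\tor}$. Applying Lemma \ref{intover0dim} on $X_{\ul{x}}^{\tor}$ — where the presence of the boundary divisor $B_{\ul{x}}$ places us in its second branch — each such $0$-dimensional intersection is supported over the $0$-dimensional cusps of $X_{\ul{x}}^{\BB}$. Pushing forward by $\iota_{\ul{x}}^{\tor}$ and again using the commutative square together with Lemma \ref{cuspsvecx}, the resulting $0$-cycle on $X^{\tor}$ lies over $0$-dimensional cusps of $X^{\BB}$, so Theorem \ref{pt0cusp} identifies its class with a rational multiple of $[s]$. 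I expect the main obstacle to be the genericity argument of the previous paragraph — choosing the Borcherds products on $X_{\ul{x}}^{\tor}$ to simultaneously avoid all boundary divisors, avoid the components of the successive intersections with $B_{\ul{x}}$, and still represent $[\LL_{\tor,\ul{x}}^{\vee}]$ — which is where the linear-independence and high-pole-order input of Proposition \ref{dimbd} must be re-run on the sub-Shimura variety; the bundle compatibility $(\iota_{\ul{x}}^{\tor})^{*}\LL_{\tor}^{\vee}\cong\LL_{\tor,\ul{x}}^{\vee}$ is standard functoriality but should be recorded carefully.
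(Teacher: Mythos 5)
Your argument is correct in substance, but it takes a genuinely different route from the paper. The paper never descends to $X_{\ul{x}}^{\tor}$: it instead realizes $\iota_{\ul{x},*}^{\tor}B_{\ul{x}}$ as a component of the intersection of $\overline{Z(\ul{x})}=\bigcap_{j=1}^{r(\ul{x})}Z_{j}$ with a boundary divisor $B$ of the \emph{ambient} variety $X^{\tor}$ (such a $B$ exists by the functoriality of the toroidal compactifications), sets $Z_{n}:=B$, and then reuses verbatim the ambient Borcherds products from the proof of Proposition \ref{dimbd} to feed the whole configuration into Lemma \ref{intover0dim} on $X^{\tor}$, whose second branch places the resulting points over $0$-dimensional cusps of $X^{\BB}$. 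Your route, via the projection formula and the identification $(\iota_{\ul{x}}^{\tor})^{*}\LL_{\tor}^{\vee}\cong\LL_{\tor,\ul{x}}^{\vee}$, buys a cleaner conceptual reduction to the sub-Shimura variety and isolates the only hard case (Case 1, $B_{\ul{x}}=B_{\ul{x},J}$ over a $1$-dimensional cusp), but it costs two additional inputs that the paper's argument avoids: (i) the compatibility of the \emph{canonical extensions} of the tautological bundles across the toroidal boundary, which is true (it follows from Harris's functoriality results, cf.\ \cite{H}) but is a nontrivial boundary statement that must be cited, not just ``standard functoriality'' of the open tautological bundles; and (ii) a re-run of both the Borcherds-product genericity argument and of Lemma \ref{intover0dim} on $X_{\ul{x}}^{\tor}$ for the smaller lattice $L_{\ul{x}}$ --- the lemma as stated in the paper concerns only $X^{\tor}$, though its proof applies verbatim to any orthogonal Shimura variety of signature $(d,2)$. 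Neither point is a gap, but both should be recorded explicitly if you pursue this version; the paper's choice of staying on $X^{\tor}$ is precisely what lets it sidestep them.
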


\begin{proof}
There is a divisor $B$ on $X^{\tor}$ such that its intersection with $\overline{Z(\ul{x})}$ is the sum of $\iota_{\ul{x},*}^{\tor}$-images of boundary divisors on $X_{\ul{x}}^{\tor}$, including $B_{\ul{x}}$. As in the proof of Proposition \ref{dimbd}, we can consider $Z(\ul{x})$ as an irreducible component of an intersection $\bigcap_{j=1}^{r(\ul{x})}Z_{j}$ of special divisors on $X^{\tor}$, such that the other components are also of dimension $n-r(\ul{x})$. Again establishing the result for the intersection of the $Z_{j}$'s, $B$, and $n-r(\ul{x})-1$ copies of $\LL_{\tor}^{\vee}$ would imply the desired assertion for $\iota_{\ul{x},*}^{\tor}[B_{\ul{x}}]\cdot[\LL_{\tor}^{\vee}]^{n-r(\ul{x})-1}$.

Now, the proof of Proposition \ref{dimbd} also shows that the divisors of appropriately chosen sections of $\LL_{\tor}^{\vee}$ are sums of special cycles, such that adding each one of them successively to the $Z_{j}$'s, and setting $Z_{n}:=B$, produces a sequence satisfying the condition from Lemma \ref{intover0dim}. That lemma shows that in Case 1, this intersection is a collection of points having $\pi$-images that are 0-dimensional cusps of $X^{\tor}$. In Case 2, the intersection is also a collection of points, and as all of them lie on $B$, their $\pi$-images are all the 0-dimensional cusp $\pi(B) \subseteq X^{\tor}$. As Theorem \ref{pt0cusp} implies that in both cases, all these points give rise to the same class $[s]$ in $\CH^{n}(X^{\tor})_{\QQ}$, the result follows. This proves the lemma.
\end{proof}

We deduce the following consequence.
\begin{cor} \label{altbys}
Let $Z$ be any cycle on $X^{\tor}$ of codimension $r+1$, and let $\widetilde{Z}$ be a cycle which is the sum $Z+\sum\limits_{r(\ul{x})=r}a_{\ul{x}}(\iota_{\ul{x},*}^{\tor}B_{\ul{x}})$ of $Z$ and a linear combination of pushforwards of boundary divisors $B_{\ul{x}}$ of the varieties $X_{\ul{x}}^{\tor}$, for $\ul{x}$ such that $r(\ul{x})=r$. Then the element \[[Z]\cdot[\LL_{\tor}^{\vee}]^{n-r(\ul{x})-1}-\deg\big([Z]\cdot[\LL_{\tor}^{\vee}]^{n-r(\ul{x})-1}\big)\cdot[s]\in\CH^{n}(X^{\tor})_{\QQ}\] gives the same class as $[\widetilde{Z}]\cdot[\LL_{\tor}^{\vee}]^{n-r(\ul{x})-1}-\deg\big([\widetilde{Z}]\cdot[\LL_{\tor}^{\vee}]^{n-r(\ul{x})-1}\big)\cdot[s]$.
\end{cor}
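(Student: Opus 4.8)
The plan is to exploit the linearity of the whole normalization operation and then invoke Lemma \ref{intcusp} on a single boundary divisor. Write $r:=r(\ul{x})$ for the common value attached to the cycles in the sum (so that $Z$ and each $\iota_{\ul{x},*}^{\tor}B_{\ul{x}}$ have codimension $r+1$ and the exponent is $n-r-1=n-r(\ul{x})-1$), and consider the operation
\[
N(\alpha):=\alpha\cdot[\LL_{\tor}^{\vee}]^{n-r-1}-\deg\big(\alpha\cdot[\LL_{\tor}^{\vee}]^{n-r-1}\big)\cdot[s]
\]
defined on classes $\alpha\in\CH^{r+1}(X^{\tor})_{\QQ}$. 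Since both intersection with $[\LL_{\tor}^{\vee}]^{n-r-1}$ and the degree map $\deg\colon\CH^{n}(X^{\tor})_{\QQ}\to\QQ$ are additive, $N$ is $\QQ$-linear. The assertion of the corollary is exactly that $N([Z])=N([\widetilde{Z}])$, which by linearity is equivalent to $N([\widetilde{Z}]-[Z])=0$. As $[\widetilde{Z}]-[Z]=\sum_{r(\ul{x})=r}a_{\ul{x}}\,\iota_{\ul{x},*}^{\tor}[B_{\ul{x}}]$, it suffices to prove that $N\big(\iota_{\ul{x},*}^{\tor}[B_{\ul{x}}]\big)=0$ for each individual boundary divisor $B_{\ul{x}}$ with $r(\ul{x})=r$.

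First I would apply Lemma \ref{intcusp} to each such $B_{\ul{x}}$: the lemma asserts precisely that the class $\iota_{\ul{x},*}^{\tor}[B_{\ul{x}}]\cdot[\LL_{\tor}^{\vee}]^{n-r(\ul{x})-1}$ in $\CH^{n}(X^{\tor})_{\QQ}$ is a rational multiple of $[s]$, so I may write it as $c_{\ul{x}}[s]$ with $c_{\ul{x}}\in\QQ$. Next I would take degrees. Since $s$ is a single point we have $\deg[s]=1$ (equivalently, $B^{n}_{0}(X^{\tor})$ has rank $1$ by Theorem \ref{pt0cusp}), whence $\deg\big(\iota_{\ul{x},*}^{\tor}[B_{\ul{x}}]\cdot[\LL_{\tor}^{\vee}]^{n-r-1}\big)=c_{\ul{x}}$. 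Substituting both facts into the definition of $N$ gives
\[
N\big(\iota_{\ul{x},*}^{\tor}[B_{\ul{x}}]\big)=c_{\ul{x}}[s]-c_{\ul{x}}[s]=0.
\]
Multiplying by $a_{\ul{x}}$ and summing over all $\ul{x}$ with $r(\ul{x})=r$ then yields $N([\widetilde{Z}]-[Z])=0$, i.e.\ $N([Z])=N([\widetilde{Z}])$, which is the claim.

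The essential geometric content is entirely carried by Lemma \ref{intcusp} (which itself rests on Theorem \ref{pt0cusp}), so the present argument is a formal bookkeeping consequence rather than a new difficulty. The only two points needing care are the linearity of $N$, which is what reduces the general statement to the case of a single pushed-forward boundary divisor, and the normalization $\deg[s]=1$, which is exactly what makes the degree-subtracted class cancel. Thus the one conceptual obstacle—that intersecting a boundary-divisor pushforward with the appropriate power of $\LL_{\tor}^{\vee}$ always lands in the one-dimensional subgroup spanned by $[s]$—has already been dispatched in Lemma \ref{intcusp}, and nothing beyond linearity and the degree computation remains.
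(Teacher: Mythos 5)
Your proposal is correct and follows essentially the same route as the paper: both arguments reduce to Lemma \ref{intcusp} showing that each $\iota_{\ul{x},*}^{\tor}[B_{\ul{x}}]\cdot[\LL_{\tor}^{\vee}]^{n-r(\ul{x})-1}$ is a multiple of $[s]$, and then use $\deg[s]=1$ to see that subtracting the degree times $[s]$ cancels this contribution. Your explicit introduction of the linear operator $N$ is just a slightly more formal packaging of the paper's two-line argument.
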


\begin{proof}
By Lemma \ref{intcusp}, the difference between the cycles $[Z]\cdot[\LL_{\tor}^{\vee}]^{n-r(\ul{x})-1}$ and $[\widetilde{Z}]\cdot[\LL_{\tor}^{\vee}]^{n-r(\ul{x})-1}$ is a multiple of $[s]$. As $\deg[s]=1$, the degree gives the multiple, and the desired equality follows. This proves the corollary.
\end{proof}

\section{Non-Triviality in the Chow Group \label{NonTriv}}

Finally, we make some remarks on the non-triviality of the generating series $F^{\circ}(\tau)$ from \eqref{unifgenser}, valued in $\CH^{n}(X^{\tor})_{\CC}$. For each $0 \leq k \leq n$ we denote by $\CH^{k}(X^{\tor})^{0}_{\CC}$ the subspace of cohomologically trivial classes in $\CH^{k}(X^{\tor})_{\CC}$. From the definition in \eqref{spcyctor}, the special cycles $Z^{\circ}(T,\ul{\mu})$ are all cohomologically trivial, thus in $\CH^{n}(X^{\tor})^{0}_{\CC}$.

Now, the Beilinson--Bloch conjecture predicts that the group $\CH^{n}(X^{\tor})^{0}_{\CC}$ lies in perfect duality with $\CH^{1}(X^{\tor})^{0}_{\CC}$ (see Conjecture II on page 32 of \cite{Sch}). In Case 1, when $n$ is larger than the Witt rank $w$ of $V$, the toroidal compactification $X^{\tor}$ carries no holomorphic 1-forms, and therefore, by Hodge theory, the cohomology group $H^{1}(X^{\tor},\CC)$ vanishes. This implies that $\CH^{1}(X^{\tor})^{0}_{\CC}=\{0\}$. Thus, assuming the Beilinson--Bloch conjecture, the value group $\CH^{n}(X^{\tor})^{0}_{\CC}$ of $F^{\circ}(\tau)$ is expected to be trivial. However, no unconditional proof of the vanishing of $\CH^{n}(X^{\tor})^{0}_{\CC}$ seems to be known, even for the classical case of Hilbert modular surfaces.

There are two interesting cases remaining, where the orthogonal Shimura variety $X$ is non-compact and the above conditional argument does not imply the triviality of $\CH^{n}(X^{\tor})^{0}_{\CC}$. First, if $X$ is of dimension 1 and the underlying quadratic space is isotropic, then $X$ is a modular curve and our result essentially specializes to Borcherds' proof of the Gross--Kohnen--Zagier theorem established in \cite{Bo2}. Second, if $X$ is of dimension 2 and the underlying quadratic space has Witt index 2, then $X$ is isogenous to the product of a non-compact modular curve with itself. Note that the structure of $X^{\tor}$ depends on the choice of cone decompositions, and only some specific choices yield the product of a compactified modular curve with itself. In this case $\CH^{1}(X^{\tor})^{0}_{\CC}$ will typically be non-trivial, and thus the Beilinson--Bloch conjecture predicts that $\CH^{2}(X^{\tor})^{0}_{\CC}$ is non-trivial as well. Thus in this case we expect the generating series $F^{\circ}(\tau)$ to be nontrivial.

\medskip

In Case 2, $\CH^{n}(X^{\tor})^{0}_{\CC}$ is non-trivial in general. We present the reasoning for this below, after some notation. Assume that $X^{\tor}$ is smooth, let $\Omega_{X^{\tor}}$ be the sheaf of holomorphic differentials on $X^{\tor}$, and recall the \emph{Albanese variety} of $X^{\tor}$, defined by
\[
\Alb(X^{\tor}):=H^{0}(X^{\tor},\Omega_{X^{\tor}})^{*}/H_{1}(X^{\tor},\ZZ).
\]
Under the assumption that $\CH^{n}(X^{\tor})_{\CC}$ is finite dimensional, Roitman's theorem (see, e.g., Theorem 10.2 of \cite{V}) gives an isomorphism
\[
\CH^{n}(X^{\tor})_{\CC}^{0}\simeq\Alb(X^{\tor}).
\]
Thus it is enough for $\Alb(X^{\tor})$, and thus $H^{0}(X^{\tor},\Omega_{X^{\tor}})$, to be non-trivial. We recall from Section 1 of \cite{MR} that if $n>1$ then $H^{0}(X^{\tor},\Omega_{X^{\tor}})$ is isomorphic to $H^{0}(X,\Omega_{X})$, where $X$ is the open Shimura variety. Then Theorem 8.1 of \cite{Sh} implies that we can find arithmetic subgroups $\Gamma\subseteq\operatorname{U}(V)$ such that $H^{0}(X,\Omega_{X})$ is non-trivial, where $X$ the Shimura variety associated to $\Gamma$. For $n=2$, examples for which $H^{0}(X,\Omega_{X})$ is non-trivial are given in the tables appearing in \cite{Ya}. Furthermore, $H^{0}(X,\Omega_{X})$ is nontrivial for certain arithmetic subgroups also in the case of open unitary Shimura varieties arising from Hermitian spaces over a totally real field that is strictly larger than $\QQ$, as proved in Theorem 1 of \cite{Ka}.


\begin{thebibliography}{BHKRY}

\bibitem[Bo1]{Bo1} Borcherds, R. E., \textsc{Automorphic Forms with Singularities on Grassmannians}, Invent. Math. {\bf 132}, 491--562 (1998).
\bibitem[Bo2]{Bo2} Borcherds, R. E., \textsc{The Gross--Kohnen--Zagier Theorem in Higher Dimensions}, Duke Math J. {\bf 97} no. 2, 219--233 (1999). Correction: Duke Math J. {\bf 105} no. 1, 183--184 (2000).
\bibitem[Br]{Br} Bruinier, J. H., \textsc{Borcherds Products on $\mathrm{O}(2,l)$ and Chern Classes of Heegner Divisors}, Lecture Notes in Mathematics {\bf 1780}, Springer--Verlag (2002).
\bibitem[BHKRY]{BHKRY} Bruinier, J. H., Howard, B., Kudla, S., Rapoport, M., Yang, T., \textsc{Modularity of Generating Series of Divisors on Unitary Shimura Varieties}, Ast\'{e}risque {\bf 421}, 7--125 (2020).
\bibitem[BR]{BR} Bruinier, J. H., Westerholt-Raum, M. \textsc{Kudla's Modularity Conjecture and Formal Fourier--Jacobi Series}, Forum of Math. Pi {\bf 3}, (2015).
\bibitem[BZ]{BZ} Bruinier, J. H., Zemel, S., \textsc{Special Cycles on Toroidal Compactifications of Orthogonal Shimura Varieties}, Math. Ann. {\bf 384}, 1--63 (2022).
\bibitem[EGT]{EGT} Engel, P., Greer, F., Tayou, S., \textsc{Mixed Mock Modularity of Special Divisors}, Preprint, https://arxiv.org/abs/2301.05982 (2023).
\bibitem[Fi]{Fi} Fiori, A., \textsc{Toroidal Compactifications and Dimension Formulas for Spaces of Modular Forms for Orthogonal Shimura Varieties}, pre-print. arXiv link: https://arxiv.org/abs/1610.04865.
\bibitem[Fu]{Fu} Funke, J., \textsc{Heegner Divisors and Nonholomorphic Modular Forms}, Compositio Math. {\bf 133} 289--321 (2002).
\bibitem[Ga]{Ga} Garcia, L., \textsc{Kudla-Millson Forms and One-Variable Degenerations of Hodge Structure}. pre-print, https://arxiv.org/abs/2301.08733
\bibitem[GKZ]{GKZ} Gross, B., Kohnen, W., Zagier, D., \textsc{Heegner Points and Derivatives of $L$-Series, II}, Math. Ann. {\bf 278}, 497--562 (1987).
\bibitem[H]{H} Harris, M., \textsc{Functorial Properties of Toroidal Compactifications of Locally Symmetric Varieties}, Proc. London Math Soc. {\bf 59}, 1--22 (1989).
\bibitem[HZ]{HZ} Hirzebruch, F., Zagier, D., \textsc{Intersection Numbers of Curves on Hilbert Modular Surfaces and Modular Forms of Nebentypus}, Invent. Math. {\bf 36}, 57--114 (1976).
\bibitem[Ho1]{Ho1} Hofmann, E., \textsc{Borcherds Products for $\operatorname{U}(1,1)$}, Int. J. Number Theory {\bf 9}, 1801--1820 (2013).
\bibitem[Ho2]{Ho2} Hofmann, E., \textsc{Borcherds Products on Unitary Groups}, Math. Ann. {\bf 358}, 799--832 (2014).
\bibitem[Ka]{Ka} Kazhdan, D., \textsc{Some Applications of the Weil Representation}, J. Anal. Math. {\bf 32}, 235--248 (1977).
\bibitem[Ku]{Ku} S. Kudla, \textsc{Special Cycles and Derivatives of Eisenstein Series}, in \emph{Heegner Points and Rankin $L$-series}, Math. Sci. Res. Inst. Publ. {\bf 49}, Cambridge University Press, Cambridge (2004).
\bibitem[KM]{KM} Kudla, S., Millson, J., \textsc{Intersection Numbers of Cycles on Locally Symmetric Spaces and Fourier Coefficients of Holomorphic Modular Forms in Several Complex Variables}, Publ. Math. l'IHÉS {\bf 71}, 121--172, (1990).
\bibitem[Li]{Li} Li, C., \textsc{Geometric and Arithmetic Theta Correspondences}, lecture notes, Summer School on the Langlands program, IHES (2022).
\bibitem[Liu]{Liu} Liu, Y., \textsc{Arithmetic Theta Lifting and $L$-Derivatives for Unitary Groups I}, Algebra Number Theory {\bf 5}, 849--921 (2011).
\bibitem[M]{M} Ma, S., \textsc{Zero-Cycles over Zero-Dimensional Cusps}, pre-print, https://arxiv.org/abs/2103.02871 (2021).
\bibitem[MR]{MR} Murty, V. K., Ramakrishnan, D. \textsc{The Albanese of Unitary Shimura Varieties}, in \emph{The Zeta Functions of Picard Modular Surfaces}, eds. R. Langlands and D. Ramakrishnan, 445--464 (1992).
\bibitem[Sch]{Sch} Schneider, P., \textsc{Introduction to the Beilinson Conjectures}, in \emph{Beilinson's Conjecture on Special Values of $L$-Functions}, eds. M. Rapoport, M. Schappacher, and P. Schneider, Oberwolfach Reports (1987).
\bibitem[Sh]{Sh} Shimura, G., \textsc{Automorphic Forms and the Periods of Abelian Varieties}, J. Math. Soc. Japan {\bf 31}, 561--592 (1979).
\bibitem[V]{V} Voisin, C., \textsc{Hodge Theory and Complex Algebraic Geometry II}, Cambridge Studies in Advanced Mathematics {\bf 77}, Cambridge University Press, x+351pp (2003).
\bibitem[Xi]{Xi} Xia, J., \textsc{Some Cases of Kudla’s Modularity Conjecture for Unitary Shimura Varieties}, Forum of Math. Sigma {\bf 10}, paper 37, 1--31 (2022).
\bibitem[Ya]{Ya} Yasaki, D., \textsc{Integral Cohomology of Certain Picard Modular Surfaces}, J. Number Theory {\bf 134}, 13--28 (2014).
\bibitem[YZZ]{YZZ} Yuan, X., Zhang, S., Zhang, W. \textsc{The Gross–-Kohnen–-Zagier Theorem over Totally Real Fields}, Compos. Math. {\bf 145}, 1147--1162 (2009).
\bibitem[Za]{Za} Zagier, D., \emph{Nombres de classes et formes modulaires de poids $3/2$}, C. R. Acad. Sci. Paris S\'er. A-B \textbf{281}, 883--886 (1975).
\bibitem[Ze1]{Ze1} Zemel, S., \textsc{A $p$-adic Approach to the Weil Representation of Discriminant Forms Arising from Even Lattices}, Math. Ann. Qu\'{e}bec {\bf 39}, 61--89 (2015).
\bibitem[Ze2]{Ze2} Zemel, S., \textsc{The Integral Structure of Parabolic Subgroups of Orthogonal Groups}, J. Algebra {\bf 559}, 95--128 (2020).
\bibitem[Ze3]{Ze3} Zemel, S., \textsc{Hermitian Jacobi Forms Having Modules as their Index and Vector-Valued Jacobi Forms}, pre-print, https://arxiv.org/abs/2310.16508 (2023).
\bibitem[Zh]{Zh} Zhang, W., \textsc{Modularity of Generating Functions of Special Cycles on Shimura Varieties}, Ph.D. thesis, Columbia University (2009).

\end{thebibliography}
\end{document}